\tikzstyle{box} = [rectangle,text centered, draw=black]
\tikzstyle{arrow} = [thick, ->, >=stealth]
\subjclass[2010]{32A07, 32A25, 32A36, 32A50}
\numberwithin{equation}{section}
\newtheorem{theorem}{Theorem}[section]
\newtheorem{definition}[theorem]{Definition}
\newtheorem{proposition}[theorem]{Proposition}
\newtheorem{corollary}[theorem]{Corollary}
\newtheorem{lemma}[theorem]{Lemma}
\newtheorem{remark}[theorem]{Remark}
\theoremstyle{definition}
\theoremstyle{remark}
\theoremstyle{assumption}
\title[]{Bergman spaces under maps of monomial type}
\authors
\author{Alexander Nagel}
\address{\hskip-0.15in Alexander Nagel \newline Department of Mathematics \newline  University of Wisconsin Madison \newline  Van Vleck Hall \newline  480 Lincoln Drive \newline Madison, WI 53706 USA}  
\email{ajnagel@wisc.edu}
\author{Malabika Pramanik}
\address{\hskip-0.15in Malabika Pramanik \newline Department of Mathematics \newline University of British Columbia \newline 1984 Mathematics Road \newline  Vancouver, BC V6T 1Z2 Canada}
\email{malabika@math.ubc.ca}
\date{\today}
\begin{document}
\def\a{\mathbf a}
\def\b{\mathbf b}
\def\c{\mathbf c}
\def\d{\mathbf d}
\def\e{\mathbf e}
\def\f{\mathbf f}
\def\g{\mathbf g}
\def\h{\mathbf h}
\def\i{\mathbf i}
\def\j{\mathbf j}
\def\k{\mathbf k}
\def\l{\mathbf l}
\def\m{\mathbf m}
\def\n{\mathbf n}
\def\o{\mathbf o}
\def\p{\mathbf p}
\def\q{\mathbf q}
\def\r{\mathbf r}
\def\s{\mathbf s}
\def\t{\mathbf t}
\def\u{\mathbf u}
\def\v{\mathbf v}
\def\w{\mathbf w}
\def\x{\mathbf x}
\def\y{\mathbf y}
\def\z{\mathbf z}

\def\0{{\mathbf 0}}
\def\1{{\mathbf 1}}
\def\2{{\mathbf 2}}
\def\3{{\mathbf 3}}
\def\4{{\mathbf 4}}
\def\5{{\mathbf 5}}
\def\6{{\mathbf 6}}
\def\7{{\mathbf 7}}
\def\8{{\mathbf 8}}
\def\9{{\mathbf 9}}

\def\A{\mathbb A}       
\def\B{\mathbb B}
\def\C{\mathbb C}
\def\D{\mathbb D}
\def\E{\mathbb E}
\def\F{\mathbb F}
\def\G{\mathbb G}
\def\H{\mathbb H}
\def\I{\mathbb I}
\def\J{\mathbb J}
\def\K{\mathbb K}
\def\L{\mathbb L}
\def\M{\mathbb M}
\def\N{\mathbb N}
\def\O{\mathbb O}
\def\P{\mathbb P}
\def\Q{\mathbb Q}
\def\R{\mathbb R}
\def\S{\mathbb S}
\def\T{\mathbb T}
\def\U{\mathbb U}
\def\V{\mathbb V}
\def\W{\mathbb W}
\def\X{\mathbb X}
\def\Y{\mathbb Y}
\def\Z{\mathbb Z}

\def\AA{\mathcal A}
\def\BB{\mathcal B}
\def\CC{\mathcal C}
\def\DD{\mathcal D}
\def\EE{\mathcal E}
\def\FF{\mathcal F}
\def\GG{\mathcal G}
\def\HH{\mathcal H}
\def\II{\mathcal I}
\def\JJ{\mathcal J}
\def\KK{\mathcal K}
\def\LL{\mathcal L}
\def\MM{\mathcal M}
\def\NN{\mathcal N}
\def\OO{\mathcal O}
\def\PP{\mathcal P}
\def\QQ{\mathcal Q}
\def\RR{\mathcal R}
\def\SS{\mathcal S}
\def\TT{\mathcal t}
\def\UU{\mathcal U}
\def\VV{\mathcal V}
\def\WW{\mathcal W}
\def\XX{\mathcal X}
\def\YY{\mathcal Y}
\def\ZZ{\mathcal Z}

\def \bfA {\mathbf A}
\def \bfB {\mathbf B}
\def \bfC {\mathbf C}
\def \bfD {\mathbf D}
\def \bfE {\mathbf E}
\def \bfF {\mathbf F}
\def \bfG {\mathbf G}
\def \bfH {\mathbf H}
\def \bfI {\mathbf I}
\def \bfJ {\mathbf J}
\def \bfK {\mathbf K}
\def \bfL {\mathbf L}
\def \bfM {\mathbf M}
\def \bfN {\mathbf N}
\def \bfO {\mathbf O}
\def \bfP {\mathbf P}
\def \bfQ {\mathbf Q}
\def \bfR {\mathbf R}
\def \bfS {\mathbf S}
\def \bfR {\mathbf R}
\def \bfR {\mathbf R}
\def \bfT {\mathbf T}
\def \bfU {\mathbf U}
\def \bfV {\mathbf V}
\def \bfW {\mathbf W}
\def \bfX {\mathbf X}
\def \bfY {\mathbf Y}
\def \bfZ {\mathbf Z}

\def\CZ{Calder\'on-Zygmund }

\def\alphab{\boldsymbol{\alpha}}
\def\betab{\boldsymbol{\beta}}
\def\gammab{\boldsymbol{\gamma}}
\def\deltab{\boldsymbol{\delta}}
\def\epsilonb{\boldsymbol{\epsilon}}
\def\zetab{\boldsymbol{\zeta}}
\def\etab{\boldsymbol{\eta}}
\def\thetab{\boldsymbol{\theta}}
\def\iotab{\boldsymbol{\iota}}
\def\kappab{\boldsymbol{\kappa}}
\def\lambdab{\boldsymbol{\lambda}}
\def\mub{\boldsymbol{\mu}}
\def\nub{\boldsymbol{\nu}}
\def\xib{\boldsymbol{\xi}}
\def\pib{\boldsymbol{\pi}}
\def\rhob{\boldsymbol{\rho}}
\def\sigmab{\boldsymbol{\sigma}}
\def\taub{\boldsymbol{\tau}}
\def\upsilonb{\boldsymbol{\upsilon}}
\def\phib{\boldsymbol{\phi}}
\def\varphib{\boldsymbol{\varphi}}
\def\chib{\boldsymbol{\chi}}
\def\psib{\boldsymbol{\psi}}
\def\omegab{\boldsymbol{\omega}}
\def\varthetab{\boldsymbol{\vartheta}}

\def\bigkappa{\text{\LARGE $\kappa$}}

\def\dbar{\bar\partial}
\def\bx{\square}
\def \RE {\Re\text{\rm e}}
\def \IM {\Im\text{\rm m}}
\def\bea[{[\![}
\def\eea]{]\!]}

\def\interior{\text{int\,}}
\newcommand\bkt[2]{\left\langle #1,#2\right\rangle}

\def\be{\begin{equation}}
\def\ee{\end{equation}}
\def\bes{\begin{equation*}}
\def\ees{\end{equation*}}
\def\bea{\begin{equation}\begin{aligned}}
\def\eea{\end{aligned}\end{equation}}
\def\beas{\begin{equation*}\begin{aligned}}
\def\eeas{\end{aligned}\end{equation*}}

\def\AAA{\mathbf A}
\def\BBB{\mathbf B}
\def\CCC{\mathbf C}
\def\DDD{\mathbf D}
\def\EEE{\mathbf E}
\def\EEEt{\tidetilde{\EEE}}
\def\FFF{\mathbf F}
\def\MMM{\mathbf M}
\def\SSS{\mathbf S}
\def\TTT{\mathbf T}
\def\tI{\tilde I}
\def\xt{\tilde{\x}}
\def\cdott{\,\tilde\cdot\,}
\def\INT{\text{int}}
\def\VOL{\text{Vol}}

{\allowdisplaybreaks

\begin{abstract}
For appropriate domains $\Omega_{1}, \Omega_{2}$ we consider mappings $\Phi_{\mathbf A}:\Omega_{1}\to\Omega_{2}$ of monomial type. We obtain an orthogonal decomposition of the Bergman space $\mathcal A^{2}(\Omega_{1})$ into finitely many closed subspaces indexed by characters of a finite Abelian group associated to the mapping $\Phi_{\mathbf A}$. We then show that each subspace is isomorphic to a weighted Bergman space on $\Omega_{2}$. This leads to a formula for the Bergman kernel on $\Omega_{1}$ as a sum of weighted Bergman kernels on $\Omega_{2}$. 
\end{abstract}
\thanks{Part of this work was finalized in April 2019, at the Banff International Research Station (BIRS) in Banff, Alberta during a ``Research in Teams" residency program. The authors are grateful to BIRS for their hospitality and support during this stay. MP would like to thank Prof.\ Jonathan Pakianathan for a helpful discussion at an initial stage of the project, regarding the material of Section \ref{Sec3} and in particular for indicating the reference \cite{Munkres}.  AN was supported in part by funds from a Steenbock Professorship at the University of Wisconsin-Madison. MP was partially supported through NSERC Discovery grants and a Wall Scholarship from the Peter Wall Institute for Advanced Study. } 

\maketitle

\thispagestyle{empty}
 
\section{Introduction} 

\noindent Let $\Omega_{1}, \,\Omega_{2}\subseteq\mathbb C^{n}$ be open sets and let $\Phi_{\mathbf A}:\Omega_{1}\to\Omega_{2}$ be a surjective holomorphic mapping of monomial type. In this paper we obtain a decomposition of weighted Bergman spaces on $\Omega_{1}$ associated to the mapping $\Phi_{\mathbf A}$ as well as relationships between the weighted Bergman kernels of $\Omega_{1}$ and $\Omega_{2}$. In this Introduction we begin by recalling the definitions of these concepts, and then state our main results.

\subsection{Bergman projections and kernels}

\noindent Let $\Omega$ be an open set in $\mathbb C^n$, $n \geq 1$ with Lebesgue measure $dV$. Given a continuous \emph{weight function} $\omega: \Omega \rightarrow (0, \infty)$, denote by $\mathcal L^2(\Omega, \omega)$ the Hilbert space of (equivalence classes of) Lebesgue-measurable functions on $\Omega$ that are square-integrable with respect to the measure $\omega(\mathbf z) dV(\mathbf z)$. The closed subspace $\mathcal A^2(\Omega, \omega)\subseteq\mathcal L^2(\Omega, \omega)$ consisting of functions that are holomorphic on $\Omega$ is the corresponding {\em{weighted Bergman space}}. The orthogonal projection $\mathscr P_{\Omega}^{\omega}:\mathcal L^2(\Omega, \omega)\to \mathcal A^2(\Omega, \omega)$ is the {\em{weighted Bergman projection}}. For $f\in \mathcal L^{2}(\Omega, \omega)$ and $\z\in \Omega$ the projection $\mathscr P^{\omega}_{\Omega}[f]$ is given by
\begin{equation} \label{Eqn1.1}
\mathscr P^{\omega}_{\Omega}[f](\mathbf z) = \int_{\Omega} B_{\Omega}(\mathbf z, \mathbf w; \omega) f(\mathbf w) \omega(\mathbf w) dV(\mathbf w).  
\end{equation} 
The integration kernel $B_{\Omega}(\cdot, \cdot; \omega): \Omega \times \Omega \rightarrow \mathbb C$ is the {\em{weighted Bergman kernel}}. If $\{\psi_{j} : j \geq 1\}$ is any complete orthonormal basis for $\mathcal A^{2}(\Omega, \omega)$ then 
\begin{equation} \label{Eqn1.2}
B_{\Omega}(\mathbf z,\mathbf w; \omega)=\sum_{j=1}^{\infty}\psi_{j}(\mathbf z)\overline{\psi_{j}(\mathbf w)},
\end{equation} 
where the series converges absolutely and uniformly on compact subsets of $\Omega\times\Omega$. The value of the Bergman kernel when $\mathbf z=\mathbf w$ is the solution of an extremal problem:
\begin{equation} \label{Eqn1.3a}
B_{\Omega}(\mathbf z,\mathbf z; \omega)=\sup\Big\{|h(\mathbf z)|^{2}:\text{$h\in \mathcal A^{2}(\Omega,\omega)$ and $||h||_{2, \omega}\leq 1$}\Big\}, 
\end{equation} 
where $|| \cdot ||_{2, \omega}$ denotes the norm in $\mathcal L^2(\Omega, \omega)$. It follows that 
\begin{equation} \label{Eqn1.4a}
\Omega_1 \subseteq \Omega_2\quad\Longrightarrow\quad B_{\Omega_{2}}(\mathbf z,\mathbf z; \omega)\leq B_{\Omega_{1}}(\mathbf z,\mathbf z; \omega)
\end{equation} 
for all $\z\in\Omega_{1}$. See \cite{Krantz} for the basic facts about the Bergman kernel and projection. We often omit $\omega$ when $\omega \equiv 1$, in which case $\mathcal A^2(\Omega)$ and $B_{\Omega}$ are referred to respectively as the {\em{standard Bergman space and standard Bergman kernel}} of $\Omega$. 

\vskip0.1in
 \noindent In this paper we are concerned with one aspect of the following general question:  
  
\vskip0.1in
\begin{quote} 
{\em{If $\Omega_1$, $\Omega_2\subseteq\mathbb C^n$ are open and  $\Phi: \Omega_1 \rightarrow \Omega_2$ is a surjective holomorphic mapping, how are weighted Bergman spaces on $\Omega_1$ related to those on $\Omega_2$?}}
\end{quote}
\vskip0.1in 
When $\omega \equiv 1$ and $\Phi:\Omega_{1}\to\Omega_{2}$ is biholomorphic, the answer to the above question is well-known. Specifically, we have that $\int_{\Omega_{2}}f(\mathbf w)\,dV(\mathbf w)=\int_{\Omega_{1}}f\big(\Phi(\mathbf z)\big)|\det J\Phi(\mathbf z)|^{2}\,dV(\mathbf z)$  for every $f\in L^{1}(\Omega_{2})$ where $J\Phi$ is the complex Jacobian matrix of $\Phi$. Since $\det J\Phi(\mathbf z)$ is nonvanishing and holomorphic, it follows  that 
 \begin{align}
 \mathscr{P}_{\Omega_1} \bigl( \bigl[ \det J \Phi \bigr] \cdot \bigl[f \circ \Phi \bigr] \bigr) &= \bigl[ \det J \Phi \bigr] \cdot \bigl[ \mathscr{P}_{\Omega_2} f \circ \Phi \bigr],  \label{transformation-law}  \\ 
 \int_{\Omega_{1}}B_{\Omega_{1}}(\mathbf z, \mathbf u)f\big(\Phi(\mathbf u)\big) \det J\Phi(\mathbf u)\,d\mathbf u
&=
\det J\Phi(\mathbf z)\int_{\Omega_{2}}B_{\Omega_{2}}(\Phi(\mathbf z),\mathbf v)f(\mathbf v)\,d\mathbf v.\label{Eqn1.5} \end{align}
Since $\mathbf v = \Phi(\mathbf w)$ for a unique $\mathbf w \in \Omega_1$, it follows from \eqref{Eqn1.5}  that  
\begin{equation} 
  B_{\Omega_1}(\mathbf z, \mathbf w) = \bigl[ \det J \Phi(\mathbf z) \bigr] \bigl[ B_{\Omega_2}(\Phi(\mathbf z), \Phi(\mathbf w)) \bigr]  \bigl[\overline{\det J \Phi(\mathbf w)} \bigr]. \label{transformation-law2}
  \end{equation}
For details see for example \cite{Krantz}, Proposition 1.4.12, page 52.  
    
\vskip0.1in

\noindent There has been considerable previous work which has resulted in generalizations of the formulas in \eqref{transformation-law}, \eqref{Eqn1.5},  and \eqref{transformation-law2}. In \cite{Bell} and \cite{Bell2}  Steven Bell generalized these results by showing that equations (\ref{transformation-law}) and (\ref{Eqn1.5}) continue to hold if each $\Omega_{i}$ is a bounded domain and $\Phi: \Omega_1 \rightarrow \Omega_2$ is a \emph{proper} holomorphic mapping (\emph{i.e.} $\Phi^{-1}(K)\subset\Omega_{1}$ is compact for each compact subset $K\subset\Omega_{2})$. Proper mappings are finite branched coverings. If $\Phi$ is an $m$-fold branched covering let $\Psi_{1}, \ldots, \Psi_{m}$ denote the $m$ local inverses of $\Phi$. In this case the identity (\ref{transformation-law2}) is replaced by the formula
\begin{equation}\label{Eqn1.7}
\sum_{i=1}^{m}B_{\Omega_{1}}\big(\mathbf z, \Psi_{i}(\mathbf v)\big)\overline{\det J\Psi_{i}(\mathbf v)}= \det J\Phi(\mathbf z)\,B_{\Omega_{2}}(\Phi(\mathbf z),\mathbf v), \quad \mathbf z \in \Omega_1, \; \mathbf v \in \Omega_2. 
\end{equation}

\vskip0.1in

\noindent In another direction, Siqi Fu \cite{Fu}, using the Poisson summation formula,  established similar formulas in certain cases of infinite covering maps from tube domains to Reinhardt domains.

\vskip0.1in

\noindent In Section \ref{Sec7.2} below we illustrate our results for the domain \[ \Omega=\big\{(z_{1}, z_{2})\in \C^{2}:0<|z_{1}|^{p}<|z_{2}|^{q}<1\big\},  \] with $p$ and $q$ positive integers, which is a generalization of the Hartogs triangle $\Omega = \big\{(z_{1}, z_{2})\in \C^{2}:0<|z_{1}|<|z_{2}|<1\big\}$. We are grateful to Jeff McNeal and Debraj Chakrabarti for bringing our attention to recent work \cite{{HM2015}, {HM2017}, {CEM2019},  {CKMM2020}, {CH2020}} related to this example.  For example, Edholm and McNeal have studied the Bergman kernel for domains related to the ``fat Hartogs triangle'' \[ \Omega_{\gamma}=\big\{(z_{1},z_{2})\in \C^{2}:|z_{1}|^{\gamma}<|z_{2}|<1\big\} \quad  \text{ where } \gamma>0. \] In \cite{HM2015} they show that if $\gamma$ is a positive integer then the Bergman kernel is bounded on $L^{r}(\Omega_{\gamma})$ if and only if $r \in \left((2\gamma+2)/{(\gamma+2)}, (2\gamma+2)/\gamma \right)$. In \cite{HM2017} they show that if $\gamma$ is irrational then the Bergman kernel is bounded on $L^{r}(\Omega_{\gamma})$ only when $r=2$. Chakrabarti, Edholm, and McNeal \cite{CEM2019} study duality and approximation issues on these and more general bounded Reinhardt domains. Chakrabarti, Konkel, Mainkar, and Miller \cite{CKMM2020} calculate the Bergman kernel for more general domains $\Omega_{\mathbf k}=\big\{(z_{1}, \ldots, z_{n})\in \C^{n}:|z_{1}|^{k_{1}}\cdots|z_{n}|^{k_{n}}<1\big\}$ where the exponents $k_{j}$ are (possibly negative) integers. In these papers the authors decompose the Bergman kernel for a domain into ``sub-Bergman kernels'' and these are related to the decompositions we make. Chakrabarti and Edholm \cite{CH2020} study the relationship between the $L^{r}$-mapping properties of the Bergman kernels on two domains one of which is the quotient of the other, under the action of a finite group of biholomorphic automorphisms.

\medskip

\noindent There is an extensive literature addressing the fundamental role of the Bergman projection and its kernel in complex function theory. This paper is one in a series of work by the authors \cite{{NP-2009}, {NP-Stein-conference}, {NP-preprint}, {NP-preprint-Reinhardt-general}, {NP-preprint-Reinhardt-monomial}} dealing with estimates for the Bergman kernel in various domains. Our results in this paper are motivated by our interest in estimates for \emph{complex monomial balls}, discussed in Section  \ref{appendix-section} below. Our results and objectives are of a different nature than in the earlier work of Bell \cite{Bell}, \cite{Bell2},  and are based on the algebraic structure of the mapping $\Phi_{\mathbf A}$.  

\subsection{Monomial mappings} \smallskip In this paper, we consider mappings $\Phi_{\mathbf A}$ and functions $F_{\mathbf b}$ of {\em{monomial type}}. If $\mathbf b = (b_1, \cdots, b_n) \in \mathbb Z^n$, if $\mathbf A=\{a_{j,k}\}$ is a non-singular $n \times n$ matrix with integer entries, and if $\mathbf z = (z_1, \cdots, z_n) \in \mathbb C^n$, we set  
\begin{equation} 
F_{\mathbf b} (\mathbf z) := z_1^{b_{1}} z_2^{b_{2}} \cdots z_n^{b_{n}}, \qquad   
\label{def-Phi}
\Phi_{\mathbf A}(\mathbf z) := (F_{\mathbf a_1}(\mathbf z), \cdots, F_{\mathbf a_n}(\mathbf z)),
\end{equation}
where $\mathbf a_j = (a_{j,1}, \cdots, a_{j,n})$ denotes the $j^{\text{th}}$ row vector of $\mathbf A$. If all the entries of the matrix $\mathbf A$ are non-negative integers, then $\Phi_{\mathbf A}$ is holomorphic on all of $\mathbb C^n$. If $\mathbf A$ has at least one negative entry, then $\Phi_{\mathbf A}$ is holomorphic at $\mathbf z$ if and only if  
$\z \in \mathbb C^n \setminus \mathbb H_{\mathbf A}$ where  \[ \mathbb H_{\mathbf A} =  \bigcup_{k=1}^n \Bigl\{\mathbf z \in \mathbb C^n : z_k = 0, \; \text{and there exists }  1 \leq j \leq n \text{ such that } a_{j,k} < 0 \Bigr\}.  \] 
In particular, for any non-singular $n \times n$ matrix with arbitrary integer entries, the mapping $\Phi_{\mathbf A}$ is always holomorphic on $\mathbb C_{\ast}^n := \mathbb C^n \setminus \mathbb H$, where $\mathbb H$ is the union of coordinate hyperplanes: 
\begin{equation} \label{coord-hyperplanes} \mathbb H:= \Bigl\{\mathbf z = (z_1, \cdots, z_n) \in \mathbb C^n : z_{1}z_{2}\cdots z_{n} = 0 \Bigr\}. \end{equation}
For any integer-valued matrix $\mathbf A$, the Jacobian of $\Phi_{\mathbf A}$ can be singular only at points in $\mathbb H$. Basic properties of monomial type functions and mappings are presented in Section \ref{Sec-Monomial-Map}.

\smallskip

\subsection{The groups $\mathbb G_{\mathbf A}$ and $\widehat{\mathbb G}_{\mathbf A}$}  
\noindent We now introduce algebraic objects associated with monomial mappings. In this paper, all vectors in $\mathbb R^n$ are considered row vectors, i.e., $1 \times n$ matrices. Matrix multiplication is denoted by ``$\cdot$".  $\mathbb M_n(\mathbb Z)$ and $\mathbb M_n(\mathbb R)$  denote the spaces of $n \times n$ matrices with integer and real entries respectively. The transpose and inverse of a matrix $\mathbf M$ are denoted by $\mathbf M^t$ and $\mathbf M^{-1}$. The notation $\langle \cdot, \cdot \rangle$ stands for the real inner product, \emph{i.e.} if $\mathbf z = (z_1, \cdots, z_n), \; \mathbf w = (w_1, \cdots, w_n)\in\mathbb C^{n}$ then $\langle \mathbf z, \mathbf w \rangle := \sum_{j=1}^{n} z_j w_j$. Let $\e_{1}, \ldots, \e_{n}$ denote the standard basis elements of $\R^{n}$. 

\begin{definition}\label{Def1.1}\quad

\begin{enumerate}[(a)]
\item If $\mathbf A\in \mathbb M_{n}(\mathbb Z)$ then $\mathfrak C(\mathbf A) :=\big\{\mathbf m\cdot\mathbf A^{t}:\mathbf m\in \mathbb Z^{n}\big\}$ denotes the $\mathbb Z$-submodule of $\mathbb Z^{n}$ generated by the columns of $\mathbf A$, and $\mathfrak C(\mathbf A^{t}) :=\big\{\m\cdot\mathbf A:\m\in \Z^{n}\big\}$ denotes the $\mathbb Z$-submodule of $\mathbb Z^{n}$ generated by the rows of $\mathbf A$.

\smallskip

\item $\mathbb G_{\mathbf A} := \mathbb Z^{n}/\mathfrak C(\mathbf A)$ and $\mathbb G_{\mathbf A^t} := \mathbb Z^{n}/\mathfrak C(\mathbf A^t)$ denote the quotient  groups; if $\mathbf m\in \mathbb Z^{n}$ then $[\mathbf m]$ denotes its equivalence class in  $\mathbb G_{\mathbf A}$ and $[\![\m]\!]$ denotes its equivalence class in $\mathbb G_{\mathbf A^t}$.

\smallskip

\item  \label{group-action}  If $[\mathbf m] \in \mathbb G_{\mathbf A}$  set 
\begin{equation} \label{Eqn1.10}
\xi_j ([\mathbf m]) := \exp \bigl[ 2\pi i \langle\m,\e_{j}\cdot \mathbf A^{-1}\rangle\bigr] \quad \text{  and }  \quad \xib([\m]) :=\bigl(\xi_1([\mathbf m]), \cdots, \xi_n([\mathbf m]) \bigr).
\end{equation} 
\smallskip

\item If $\v=(v_{1}, \ldots, v_{n}),\,\w=(w_{1}, \ldots, w_{n})\in \mathbb C^{n}$ then $\v\otimes\w=(v_{1}w_{1}, \ldots, v_{n}w_{n})$ denotes the Hadamard vector product.

\smallskip

\item  $\widehat{\G}_{\mathbf A}$ denote the group of characters of $\G_{\mathbf A}$, \emph{i.e.} the set of group homomorphisms from $\mathbb G_{\mathbf A}$ to the unit circle $\T=\big\{z\in \C:|z|=1\big\}$, equipped with point-wise multiplication. An element of $\widehat{\mathbb G}_{\mathbf A}$ is thus a map $\chi:\G_{\mathbf A}\to \T$ such that $\chi([\mathbf m]+[\mathbf n]) = \chi([\mathbf m]) \chi([\mathbf n])$.

\smallskip

\item \label{chi-b} If $\b\in \Z^{n}$ the function $\chi_{\b} : \mathbb G_{\mathbf A} \rightarrow \mathbb T$ given by $\chi_{\b}([\m]) :=\exp\big[2\pi i \langle\m,\b\cdot\mathbf A^{-1}\rangle\big]$ is a character of $\G_{\mathbf A}$.  

\end{enumerate}
\end{definition}
\noindent In Section \ref{Sec-Monomial-Map} we study  the algebraic structure of $\mathfrak C(\mathbf A)$, $\mathbb G_{\mathbf A}$, and $\widehat{\G}_{\mathbf A}$. We see that  $\mathbb G_{\mathbf A}$ and $\widehat{\mathbb G}_{\mathbf A}$ are finite abelian groups of order $\det(\mathbf A)$.  We also  show that the binary operation 
\begin{equation} \label{Eqn1.11}
([\m],\z)\to \xib([\m])\otimes \mathbf z = \Bigl(e^{2\pi i \langle\m,\e_{1}\cdot \mathbf A^{-1}\rangle} z_1, \ldots, e^{2\pi i \langle\m,\e_{n}\cdot \mathbf A^{-1}\rangle}z_n \Bigr)
\end{equation} 
is a a \emph{faithful} action of $\G_{\mathbf A}$  on $\C^{n}_{*}$.

\smallskip

\noindent Note that if $\b_{1}, \b_{2}\in \mathbb Z^{n}$ then the charcters $\chi_{\b_{1}}, \chi_{\b_{2}}$ defined in part (\ref{chi-b}) of Definition \ref{Def1.1} are equal if and only if $\b_{1}-\b_{2}=\n\cdot\mathbf A$ for some $\n\in \Z^{n}$; i.e. if and only if $\b_{1}-\b_{2}\in \mathfrak C(\mathbf A^{t})$. The correspondence $[\![\mathbf b]\!] \mapsto \chi_{\mathbf b}$ therefore generates a mapping $\varphi:\G_{\mathbf A^{t}}\to\widehat{\G}_{\mathbf A}$
\bea\label{Eqn1.17}
\varphi([\![\b]\!])([\m]) :=\chi_{\b}([\m])=\exp\big[2\pi i \langle\m,\b\cdot\mathbf A^{-1}\rangle\big].
\eea
Lemma \ref{Lem3.5} below shows that $\varphi$ defined in \eqref{Eqn1.17} is a group isomorphism. Thus the characters of $\G_{\mathbf A}$ are parameterized by elements of the group $\G_{\mathbf A^{t}}$.

\subsection{Invariant domains and orthogonal decompositions}
An open set $\Omega_{1}\subseteq \C^{n}$ is said to be {\em{invariant}} \label{domain-invariance} under the action of $\mathbb G_{\mathbf A}$ defined in equation \eqref{Eqn1.11} if for every $\mathbf z \in \Omega_1^{\ast} = \Omega_1 \cap \mathbb C_{\ast}^n$ and every $\mathbf m \in \mathbb Z^n$, the point $\pmb{\xi}([\mathbf m]) \otimes \mathbf z$ is also in $\Omega_1^{\ast}$. A function $f: \Omega_1 \rightarrow \mathbb C$ is said to be {\em{invariant}} under this group action  if 
\begin{equation}
f \bigl( \xib ([\mathbf m]) \otimes \mathbf z \bigr) = f(\mathbf z) \text{ for all } \mathbf z \in \Omega_1^{\ast} \text{ and for all }[\mathbf m]\in \mathbb G_{\mathbf A}.  \label{function-invariance}
\end{equation} 
\noindent Suppose that $\Omega_{1}\subseteq\mathbb C^{n}$ is invariant under the action of $\mathbb G_{\mathbf A}$. For each $\chi\in \widehat{\G}_{\AAA}$ and any function $f:\Omega_{1}\to \C$,  define
\bea\label{Eqn1.12}
\Pi_{\chi}[f](\z) := \frac{1}{\#(\mathbb G_{\mathbf A})}\sum_{[\m]\in \G_{\mathbf A}}\chi([\m])f\big(\xib([\m]\otimes \z)\big)
\eea
for $\mathbf z\in\Omega_{1}$. In Section \ref{Sec4} we will show the following.
\begin{enumerate}[$\bullet$]
\smallskip
\item 
Each $\Pi_{\chi}$ is a projection: $\Pi_{\chi}^{2}=\Pi_{\chi}$ and  $f(\z)=\sum_{\chi\in \widehat{\G}_{\AAA}}\Pi_{\chi}[f](\z)$ for $z\in \Omega_{1}$.

\smallskip

\item If $\b\in \Z^{n}$ and $\chi_{\b}$ is the character given in part (\ref{chi-b}) of Definition \ref{Def1.1} then for all $\z\in \Omega_{1}$
\beas
\Pi_{\chi_{\b}}[f]\big(\xib([\m])\otimes \z\big)&=\chi_{\b}([\m])^{-1}\,\Pi_{\chi_{\b}}[f](\z), \qquad F_{\b}(\xib([\m])\otimes\z)&=\chi_{\b}([\m])\, F_{\b}(\z).
\eeas

\smallskip

\item The function $\Pi_{\chi_{\b}}[f](\cdot)\,F_{\b}(\cdot)$ is invariant under the action of $\G_{\AAA}$:
\begin{equation} \label{invariance-Pi-chi}  \Pi_{\chi_{\b}}[f]\big(\xib([\m])\otimes \z\big)\,F_{\b}(\xib([\m])\otimes\z)=\Pi_{\chi_{\b}}[f](\z)F_{\b}(\z)
\end{equation} 
\end{enumerate}

\noindent These observations lead to the following orthogonal decompositions of the spaces $\mathcal L^{2}(\Omega_{1};\omega_{1})$ and $\mathcal A^{2}(\Omega_{1};\omega_{1})$, parameterized by the characters of $\G_{\AAA}$.

\begin{theorem} \label{Thm1.1}
Let $\AAA\in\M_{n}(\Z)$ be non-singular, possibly with negative entries. Let $\Omega_{1}\subset\C^{n}$ be an open set and  let $\omega_{1}:\Omega_{1}\to (0,\infty)$ be continuous, both invariant under the action of $\G_{\AAA}$.  For each character $\chi\in \widehat{\G}_{\AAA}$, let $\Pi_{\chi}$ be the projection operator defined in equation \eqref{Eqn1.12}. Then the following conclusions hold. 

\begin{enumerate}[(a)]

\smallskip

\item \label{Thm2.1d}
The mapping $\Pi_{\chi}$ acting  on $\mathcal L^{2}(\Omega_{1};\omega_{1})$ or  $\mathcal A^{2}(\Omega_{1};\omega_{1})$ is an orthogonal projection.

\medskip

\item \label{Thm2.1e} Denote by $\mathcal L^{2}_{\chi}(\Omega_{1};\omega_{1}) :=\Pi_{\chi}\big[\mathcal L^{2}(\Omega_{1};\omega_{1})\big]$ and $\mathcal A^{2}_{\chi}(\Omega_{1};\omega_{1}) :=\Pi_{\chi}\big[\mathcal A^{2}(\Omega_{1};\omega_{1})\big]$ the ranges of the projection $\Pi_{\chi}$. Then the following are direct sum decompositions into mutually orthogonal subspaces:
\beas
\mathcal L^{2}(\Omega_{1};\omega_{1})&= \bigoplus_{\chi\in \widehat{\G}_{\AAA}} \mathcal L^{2}_{\chi}(\Omega_{1};\omega_{1}),&&&
\mathcal A^{2}(\Omega_{1};\omega_{1}) &= \bigoplus_{\chi\in \widehat{\G}_{\AAA}} \mathcal A^{2}_{\chi}(\Omega_{1};\omega_{1}).
\eeas
\end{enumerate}
\end{theorem}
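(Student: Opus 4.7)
The plan is to reduce Theorem \ref{Thm1.1} to four essentially formal assertions: (i) each $\Pi_{\chi}$ preserves both $\mathcal L^{2}(\Omega_{1};\omega_{1})$ and $\mathcal A^{2}(\Omega_{1};\omega_{1})$; (ii) $\Pi_{\chi}^{2}=\Pi_{\chi}$; (iii) $\Pi_{\chi}$ is self-adjoint on $\mathcal L^{2}(\Omega_{1};\omega_{1})$; and (iv) $\sum_{\chi\in\widehat{\G}_{\AAA}}\Pi_{\chi}=I$. Together (i)--(iii) yield that $\Pi_{\chi}$ is an orthogonal projection, giving part \eqref{Thm2.1d}. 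Combined with the sharper composition law $\Pi_{\chi}\Pi_{\chi'}=\delta_{\chi,\chi'}\Pi_{\chi}$ (a byproduct of the proof of (ii)) and with (iv), these produce the mutually orthogonal direct-sum decomposition of part \eqref{Thm2.1e}. Since $\G_{\AAA}$ is finite (of order $\det(\AAA)$ by Section \ref{Sec-Monomial-Map}) no convergence issue arises.

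The central tool throughout is the observation that for each $[\m]\in\G_{\AAA}$ the map $T_{[\m]}(\z):=\xib([\m])\otimes\z$ is multiplication by a diagonal matrix whose entries lie on $\T$; hence $T_{[\m]}$ is a $\C$-linear biholomorphism of $\C^{n}_{*}$ with complex Jacobian of modulus one, and by the invariance hypothesis it carries $\Omega_{1}^{*}$ bijectively to itself. For step (i), a change of variables $\w=T_{[\m]}(\z)$, together with the $\G_{\AAA}$-invariance of $\omega_{1}$ and the fact that $\mathbb H\cap\Omega_{1}$ is Lebesgue-null, gives $\|f\circ T_{[\m]}\|_{2,\omega_{1}}=\|f\|_{2,\omega_{1}}$; preservation of $\mathcal A^{2}$ is immediate since each $T_{[\m]}$ is $\C$-linear. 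For step (iii), the same change of variables applied to $\langle\Pi_{\chi}f,g\rangle_{\omega_{1}}$, followed by the reindexing $[\m]\mapsto-[\m]$ and the identity $\chi(-[\m])=\overline{\chi([\m])}$, transfers the projection onto $g$ and produces $\langle\Pi_{\chi}f,g\rangle_{\omega_{1}}=\langle f,\Pi_{\chi}g\rangle_{\omega_{1}}$.

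Assertion (ii) and the sharper composition law are obtained by expanding $\Pi_{\chi}\Pi_{\chi'}[f](\z)$ as a double sum over $[\m],[\n]\in\G_{\AAA}$ and invoking the homomorphism property $T_{[\n]}\circ T_{[\m]}=T_{[\m]+[\n]}$, which follows from the fact that $\xib:\G_{\AAA}\to\T^{n}$ is a group homomorphism. Substituting $[\k]:=[\m]+[\n]$ and using multiplicativity of $\chi'$ factors the inner sum into $\sum_{[\m]}\chi([\m])\overline{\chi'([\m])}=\#\G_{\AAA}\,\delta_{\chi,\chi'}$, by the standard orthogonality of characters on the finite abelian group $\G_{\AAA}$. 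For (iv), interchanging the order of summation in $\sum_{\chi}\Pi_{\chi}[f](\z)$ and applying the dual orthogonality relation $\sum_{\chi\in\widehat{\G}_{\AAA}}\chi([\m])=\#\G_{\AAA}\,\delta_{[\m],[\0]}$ isolates the $[\m]=[\0]$ term, which equals $f(\z)$. The only mildly delicate point, and the one I would flag as the main obstacle, is reconciling $\Omega_{1}$ with $\Omega_{1}^{*}$ in the $\mathcal A^{2}$ setting: any $f\in\mathcal A^{2}(\Omega_{1};\omega_{1})$ is determined by its restriction to the dense open subset $\Omega_{1}^{*}$ on which the group action is defined, so the whole argument can be carried out there, and holomorphic extensions across the codimension-one set $\mathbb H\cap\Omega_{1}$ are automatic.
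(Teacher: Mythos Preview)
Your proof is correct and takes essentially the same approach as the paper: the paper first isolates your steps (ii), (iv), the composition law $\Pi_{\chi}\Pi_{\chi'}=\delta_{\chi,\chi'}\Pi_{\chi}$, and the mutual orthogonality of the ranges as general facts about an arbitrary finite abelian group acting on a measure space (Proposition~\ref{Prop4.2} and Corollary~\ref{Cor4.4}), and then specializes to $X=\Omega_{1}\setminus\mathbb H$ with $\mathcal S(X)$ equal to $\mathcal L^{2}$ or to the holomorphic functions that extend across $\mathbb H\cap\Omega_{1}$, exactly as in your final paragraph. The only cosmetic difference is that you verify self-adjointness of $\Pi_{\chi}$ directly via the substitution $\w=T_{[\m]}(\z)$ and the reindexing $[\m]\mapsto-[\m]$, whereas the paper deduces it from the orthogonality of the ranges together with $\sum_{\chi}\Pi_{\chi}=I$; the two routes are equivalent.
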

\vskip0.1in 
\noindent The proof of Theorem \ref{Thm1.1} is given in Section \ref{Sec5}. 

\subsection{Isomorphisms between Bergman spaces} 

 Let $\Omega_{1}, \Omega_{2}\subseteq\mathbb C^{n}$ be open sets, let $\mathbf A\in \mathbb M_{n}(\mathbb Z)$ be non-singular, and recall that $\mathbb H=\big\{(z_{1}, \ldots, z_{n})\in \mathbb C^{n}:\text{$z_{j}=0$ for some $1\leq j \leq n$}\big\}$.  Since $\mathbf A$ is non-singular, it is easy to check that $\z\in \mathbb H$ if and only if $\Phi_{\mathbf A}(\mathbf z)\in \mathbb H$. We shall suppose
\begin{equation} \label{Domains-Assumption-1} \Omega_2 = \Phi_{\mathbf A}(\Omega_1) \text{ and }   \Phi_{\mathbf A}: \Omega_1 \rightarrow \Omega_2  \text{ is holomorphic}
\end{equation}  
though not necessarily biholomorphic. In particular this means that if $\Omega_1 \cap \mathbb H \ne \emptyset$, then for every $j$ such that $\Omega_1 \cap \{ \mathbf z \in \mathbb C^n : z_j = 0 \}  \ne \emptyset$, the $j^{\text{th}}$ column of $\mathbf A$ has only non-negative integer entries. On the other hand if $\Omega_1 \cap \mathbb H = \emptyset$ then any non-singular $\mathbf A \in \mathbb M_n(\mathbb Z)$ generates a holomorphic map $\Phi_{\mathbf A}:\Omega_1\to\Omega_{2}$, and in this case $\Omega_2 \cap \mathbb H=\emptyset$ as well.  For $i=1,\,2$ set $\Omega_i^{\ast} := \Omega_i \setminus \mathbb H$.  It follows from \eqref{Domains-Assumption-1} that $\Omega_2^{\ast} = \Phi_{\mathbf A}(\Omega_1^{\ast})$. We shall assume that $\Omega_1^{\ast}$ is invariant under the action of $\mathbb G_{\mathbf A}$ defined in part \eqref{group-action} of Definition \ref{Def1.1}. Thus we assume 
\begin{equation} \label{Domains-Assumption-2}
\pmb{\xi}[\m] \otimes \z \in \Omega_1^{\ast} \text{ whenever } \z \in \Omega_1^{\ast}; \text{ or equivalently }  \Omega_1^{\ast} = \Phi_{\mathbf A}^{-1} (\Omega_2^{\ast}).
\end{equation} 

\vskip0.1in

\noindent Let $\omega_{j}:\Omega_{j}\to (0,\infty)$, $j=1,\,2$, be positive, continuous weight functions such that 
\begin{equation} \label{omega-12-relation}
\omega_{1}(\z)=\omega_{2}\big(\Phi_{\AAA}(\z)\big), \qquad \mathbf z \in \Omega_1. \end{equation} 
 In particular, this implies that the function $\omega_1$ is invariant  under the group action of $\mathbb G_{\mathbf A}$.   The measure $d \mu = \omega_1 dV$ is then also invariant under this action, in a sense that will be made precise in equation \eqref{invariant-measure} in Section \ref{Sec4}.  Under these conditions there is an isomorphism between $\mathcal A^2 (\Omega_1^{\ast}, \omega_1) $ and a direct sum of weighted Bergman spaces on $\Omega_{2}^{*}$.

\begin{theorem}\label{Thm1.2}
Let $\Omega_1$ and $\Omega_2$ be open sets in $\mathbb C^n$ satisfying assumptions \eqref{Domains-Assumption-1} and \eqref{Domains-Assumption-2}. Let $\omega_1$ and $\omega_2$ be continuous weight functions satisfying \eqref{omega-12-relation}.  Let $\b\in \Z^{n}$ and let $\chi=\varphi([\![\b]\!])$ be the  character of $\G_{\AAA}$ defined \eqref{Eqn1.17} so that $\chi([\m])=\exp\big[2\pi i \langle\m,\b\cdot\AAA^{-1}\rangle\big]$. Let $\Pi_{\chi}$ be the mapping defined in \eqref{Eqn1.12}.
\begin{enumerate}[(a)]
\smallskip
\item \label{Thm2.2a}
If $f:\Omega_{1}^{\ast} \to \C$ is \emph{any} function, there exists a unique function $T_{\b}[f]:\Omega_{2}^{\ast} \to \C$ so that $T_{\b}[f]\big(\Phi_{\AAA}(\z)\big)=\Pi_{\chi}[f](\z)\,F_{\b}(\z)$ for all $\z\in \Omega_{1}^{\ast}$.
\smallskip
\item \label{Thm2.2b}
If $g: \Omega_2^{\ast} \rightarrow \mathbb C$ is \emph{any} function and if $f(\z) = g \circ \Phi_{\mathbf A}(\mathbf z) F_{-\mathbf b}(\mathbf z)$, then $\Pi_{\chi}[f] = f$ and $T_{\mathbf b} [f] = g$.

\smallskip
\item \label{Thm2.2c}
If $f$ is holomorphic on $\Omega_1^{\ast}$ then $T_{\mathbf b}[f]$ is holomorphic on $\Omega_2^{\ast}$. 

\smallskip
\item \label{Thm2.2d}
Let  $\c= \c(\b) := (\1-\b)\cdot\AAA^{-1}-\1$ and let $\eta_{\b}(\w) :=\det(\AAA)^{-1}|F_{\c}(\w)|^{2}\omega_{2}(\w)$.
Then for every $f\in \mathcal L^{2}(\Omega_{1};\omega_{1})$ 
\begin{equation} \label{norm-identity} 
\int_{\Omega_{1}}|\Pi_{\chi}[f](\z)|^{2}\,\omega_{1}(\z)\,dV(\z)=
\int_{\Omega_{2}}|T_{\b}[f](\w)|^{2}\,\eta_{\b}(\w)\,dV(\w).
\end{equation} 

\medskip
\item \label{Thm2.2e}
If $\mathcal L^{2}_{\chi}(\Omega_{1};\omega_{1}) =\Pi_{\chi}\big[\mathcal L^{2}(\Omega_{1};\omega_{1})\big]$ and $\mathcal A^{2}_{\chi}(\Omega_{1};\omega_{1}) =\Pi_{\chi}\big[\mathcal A^{2}(\Omega_{1};\omega_{1})\big]$, the mappings
\begin{equation}  \label{two-isomorphisms}
T_{\mathbf b}: \mathcal L^2_{\chi}(\Omega_1, \omega_1) \rightarrow \mathcal L^2(\Omega_2, \eta_{\mathbf b}) \quad \text{ and } \quad T_{\mathbf b}: \mathcal A^2_{\chi}(\Omega_1^{\ast}, \omega_1) \rightarrow \mathcal A^2(\Omega_2^{\ast}, \eta_{\mathbf b}) 
\end{equation} 
are isometric isomorphisms of Hilbert spaces.

\medskip
\item \label{Thm2.2f} 
For each $\chi\in \widehat{\mathbb G}_{\mathbf A}$ choose $\mathbf b_{\chi}\in \mathbb Z^{n}$ with $\varphi( [\![\mathbf b_{\chi} ]\!]) = \chi$. Then there is an isomorphism 
\begin{equation} \label{Bergman-Omega1-Omega2} 
\mathcal A^2 (\Omega_1^{\ast}, \omega_1) \cong \bigoplus\nolimits_{\chi \in \widehat{\mathbb G}_{\mathbf A}} \mathcal A^2(\Omega_2^{\ast}, \eta_{\mathbf b_{\chi}})  
\end{equation} 
and an identity of Bergman kernels:  for $\z, \w \in \Omega_1^{\ast}$, 
\begin{equation} \label{Bergman-kernel-identity}  B_{\Omega_1^{\ast}}(\mathbf z, \mathbf w; \omega_1) = \sum_{\chi \in \widehat{\mathbb G}_{\mathbf A}} F_{-\mathbf b_{\chi}} \circ \Phi_{\mathbf A}(\mathbf z) B_{\Omega_2^{\ast}} \bigl( \Phi_{\mathbf A}(\mathbf z), \Phi_{\mathbf A}(\mathbf w) ; \eta_{\mathbf b_{\chi}}\bigr) \overline{F_{-\mathbf b_{\chi}} \circ \Phi_{\mathbf A}(\mathbf w) }.  \end{equation}  
In particular, 
\begin{equation} \label{Bergman-diagonal} 
B_{\Omega_1^{\ast}}(\mathbf z, \mathbf z; \omega_1) =  \sum_{\chi \in \widehat{\mathbb G}_{\mathbf A}} \bigl| F_{-\mathbf b_{\chi}} \circ \Phi_{\mathbf A}(\mathbf z) \bigr|^2 B_{\Omega_2^{\ast}} \bigl( \Phi_{\mathbf A}(\mathbf z), \Phi_{\mathbf A}(\mathbf z) ; \eta_{\mathbf b_{\chi}}\bigr). 
\end{equation} 
\end{enumerate}
\end{theorem}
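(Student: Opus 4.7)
The plan is to prove parts (a)--(f) in order. I expect parts (a)--(c) to be immediate consequences of the bullet-point identities displayed just before the theorem together with the local geometry of $\Phi_{\AAA}$; part (d) to contain the only genuine computation; and parts (e),(f) to follow by structural assembly using Theorem \ref{Thm1.1}. For (a), the invariance identity \eqref{invariance-Pi-chi} says that $\z\mapsto\Pi_{\chi}[f](\z)F_{\b}(\z)$ is constant on $\G_{\AAA}$-orbits in $\Omega_{1}^{*}$. The $\G_{\AAA}$-action on $\C^{n}_{*}$ is free, because $\xi_{j}([\m])=1$ for every $j$ forces $\m\cdot\AAA^{-t}\in\Z^{n}$, i.e., $\m\in\mathfrak{C}(\AAA)$ and $[\m]=[\0]$. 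Hence the orbits coincide with the fibers of $\Phi_{\AAA}:\Omega_{1}^{*}\to\Omega_{2}^{*}$, and any invariant function descends uniquely through $\Phi_{\AAA}$, yielding $T_{\b}[f]$. For (b), I would substitute $f(\z)=g(\Phi_{\AAA}(\z))F_{-\b}(\z)$ into \eqref{Eqn1.12}; the $\G_{\AAA}$-invariance of $g\circ\Phi_{\AAA}$ together with $F_{-\b}(\xib([\m])\otimes\z)=\overline{\chi([\m])}F_{-\b}(\z)$ collapses the average to $f(\z)$, and $T_{\b}[f]=g$ follows from the defining relation. For (c), the direct computation $\det J\Phi_{\AAA}(\z)=\det(\AAA)\prod_{j}F_{\mathbf a_{j}}(\z)/(z_{1}\cdots z_{n})$ shows that $\Phi_{\AAA}$ is locally biholomorphic on $\Omega_{1}^{*}$, so $T_{\b}[f]$ can be written locally as $(\Pi_{\chi}[f]\cdot F_{\b})\circ\Psi$ for a holomorphic local inverse $\Psi$ and is therefore holomorphic on $\Omega_{2}^{*}$.

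For (d), I would use that the free $\G_{\AAA}$-action has orbits equal to the fibers of $\Phi_{\AAA}$, so $\Phi_{\AAA}:\Omega_{1}^{*}\to\Omega_{2}^{*}$ is an $|\det\AAA|$-sheeted holomorphic covering. Change of variables then converts $\int_{\Omega_{2}^{*}}|T_{\b}[f]|^{2}\eta_{\b}\,dV$ into a $1/|\det\AAA|$ multiple of an integral on $\Omega_{1}^{*}$ whose integrand contains the four factors $|\Pi_{\chi}[f]|^{2}$, $|F_{\b}|^{2}$, $\eta_{\b}\circ\Phi_{\AAA}$, and $|\det J\Phi_{\AAA}|^{2}$. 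Using \eqref{omega-12-relation}, the monomial composition identity $F_{\c}(\Phi_{\AAA}(\z))=F_{\c\cdot\AAA}(\z)$, and $|\det J\Phi_{\AAA}(\z)|^{2}=(\det\AAA)^{2}|F_{\1\cdot\AAA-\1}(\z)|^{2}$, all monomial contributions combine into $|F_{\b+\c\cdot\AAA+\1\cdot\AAA-\1}(\z)|^{2}$. The definition $\c=(\1-\b)\cdot\AAA^{-1}-\1$ is engineered precisely so that this exponent vanishes; combined with the $\det(\AAA)^{-1}$ inside $\eta_{\b}$ and the $|\det\AAA|^{-1}$ covering-degree factor, the remaining constants cancel and one arrives at $\int_{\Omega_{1}^{*}}|\Pi_{\chi}[f]|^{2}\omega_{1}\,dV$, which is \eqref{norm-identity}.

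For (e), \eqref{norm-identity} immediately makes $T_{\b}:\mathcal L_{\chi}^{2}(\Omega_{1};\omega_{1})\to\mathcal L^{2}(\Omega_{2};\eta_{\b})$ an isometry. Surjectivity follows from (b): given $g\in\mathcal L^{2}(\Omega_{2};\eta_{\b})$, the function $f=(g\circ\Phi_{\AAA})F_{-\b}$ satisfies $\Pi_{\chi}[f]=f$ and $T_{\b}[f]=g$, with $f\in\mathcal L_{\chi}^{2}(\Omega_{1};\omega_{1})$ by the norm identity applied to $f$ itself. Holomorphy is preserved in both directions by (c) and by the holomorphy of $\Phi_{\AAA}$ and $F_{-\b}$ on $\Omega_{1}^{*}$. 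Finally, for (f) I would combine Theorem \ref{Thm1.1} with the isomorphism (e) summand-by-summand to obtain \eqref{Bergman-Omega1-Omega2}. For the kernel identity \eqref{Bergman-kernel-identity}, fix an orthonormal basis $\{\psi_{j}^{(\chi)}\}_{j\geq 1}$ of each $\mathcal A^{2}(\Omega_{2}^{*};\eta_{\b_{\chi}})$; by (b) and (e), the family $\{(\psi_{j}^{(\chi)}\circ\Phi_{\AAA})\cdot F_{-\b_{\chi}}\}_{j,\chi}$ is an orthonormal basis of $\mathcal A^{2}(\Omega_{1}^{*};\omega_{1})$, and substituting into the series \eqref{Eqn1.2}, then recognizing the inner $j$-sum as $B_{\Omega_{2}^{*}}(\Phi_{\AAA}(\z),\Phi_{\AAA}(\w);\eta_{\b_{\chi}})$, yields the formula, with \eqref{Bergman-diagonal} the diagonal restriction. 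The main obstacle is the exponent arithmetic in (d): the precise form of $\c(\b)$ and $\eta_{\b}$ is exactly what is required for all the $z_{k}$-powers to cancel, and verifying this cancellation is where the real work of the proof lies.
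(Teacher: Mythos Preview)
Your proposal is correct and follows essentially the same route as the paper's proof: parts (a)--(c) via the invariance identity and local biholomorphicity of $\Phi_{\AAA}$ on $\C^{n}_{*}$, part (d) via the $|\det\AAA|$-sheeted change of variables and the monomial-exponent cancellation $\b+\c\cdot\AAA+\1\cdot\AAA-\1=\0$, and parts (e)--(f) by assembling (b),(d) with Theorem~\ref{Thm1.1} and the orthonormal-basis series for the Bergman kernel. The only cosmetic difference is that in (f) you start from an orthonormal basis on the $\Omega_{2}^{*}$ side and pull back via $T_{\b_{\chi}}^{-1}$, whereas the paper starts on the $\Omega_{1}^{*}$ side and pushes forward; since $T_{\b_{\chi}}$ is already known to be a unitary, the two are interchangeable.
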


\noindent Theorem \ref{Thm1.2} is proved in Section \ref{Sec5}.

\smallskip

\noindent Since $\Omega_1^{\ast} \subseteq \Omega_1$, the extremal characterization \eqref{Eqn1.3a} gives the inequality $B_{\Omega_1^{\ast}}(\z, \z; \omega_1) \geq  B_{\Omega_1}(\z, \z; \omega_1)$.  Combining this with \eqref{Bergman-diagonal}, we get    
\begin{corollary} \label{Cor1.3}
Under the same hypotheses as Theorem \ref{Thm1.2}, we have for $\z \in \Omega_1^{\ast}$, 
\begin{equation} \label{Eqn1.29}
B_{\Omega_1}(\mathbf z, \mathbf z; \omega_1) \leq  \sum_{\chi \in \widehat{\mathbb G}_{\mathbf A}} \bigl| F_{-\mathbf b_{\chi}} \circ \Phi_{\mathbf A}(\mathbf z) \bigr|^2 B_{\Omega_2^{\ast}} \bigl( \Phi_{\mathbf A}(\mathbf z), \Phi_{\mathbf A}(\mathbf z) ; \eta_{\mathbf b_{\chi}}\bigr). 
\end{equation} 
\end{corollary}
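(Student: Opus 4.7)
The proof is a two-line combination of results already in hand, so the plan is mainly to package them cleanly. First I would invoke the domain monotonicity of the Bergman kernel stated in \eqref{Eqn1.4a}: since $\Omega_1^{\ast} = \Omega_1 \setminus \mathbb H$ is an open subset of $\Omega_1$, and any $h \in \mathcal A^{2}(\Omega_1, \omega_1)$ restricts to an element of $\mathcal A^{2}(\Omega_1^{\ast}, \omega_1)$ of the same $\mathcal L^{2}$-norm (because $\mathbb H$ has Lebesgue measure zero), the extremal characterization \eqref{Eqn1.3a} applied to the smaller class $\mathcal A^{2}(\Omega_1, \omega_1) \subseteq \mathcal A^{2}(\Omega_1^{\ast}, \omega_1)$ yields the pointwise comparison
\begin{equation*}
B_{\Omega_1}(\mathbf z, \mathbf z; \omega_1) \le B_{\Omega_1^{\ast}}(\mathbf z, \mathbf z; \omega_1), \qquad \mathbf z \in \Omega_1^{\ast}.
\end{equation*}

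Second, I would apply the diagonal identity \eqref{Bergman-diagonal} from part \eqref{Thm2.2f} of Theorem \ref{Thm1.2}, which expresses $B_{\Omega_1^{\ast}}(\mathbf z, \mathbf z; \omega_1)$ as exactly the sum $\sum_{\chi \in \widehat{\mathbb G}_{\mathbf A}} |F_{-\mathbf b_\chi} \circ \Phi_{\mathbf A}(\mathbf z)|^{2}\, B_{\Omega_2^{\ast}}(\Phi_{\mathbf A}(\mathbf z), \Phi_{\mathbf A}(\mathbf z); \eta_{\mathbf b_\chi})$. Chaining the two steps yields \eqref{Eqn1.29}.

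There is no serious obstacle. The only mild point to verify is the first step, namely that restriction from $\Omega_1$ to $\Omega_1^{\ast}$ is norm-preserving on holomorphic $\mathcal L^{2}$-functions with respect to the common weight $\omega_1$; this is immediate since $\mathbb H \cap \Omega_1$ has Lebesgue measure zero. Note that equality in \eqref{Eqn1.29} can fail in general only because $\mathcal A^{2}(\Omega_1^{\ast}, \omega_1)$ may strictly contain $\mathcal A^{2}(\Omega_1, \omega_1)$: an $\mathcal L^{2}$-holomorphic function on $\Omega_1^{\ast}$ need not extend holomorphically across the part of $\mathbb H$ lying in $\Omega_1$.
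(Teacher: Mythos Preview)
Your proposal is correct and matches the paper's own argument essentially verbatim: the paper likewise observes that $\Omega_1^{\ast}\subseteq\Omega_1$ together with the extremal characterization \eqref{Eqn1.3a} yields $B_{\Omega_1}(\mathbf z,\mathbf z;\omega_1)\le B_{\Omega_1^{\ast}}(\mathbf z,\mathbf z;\omega_1)$, and then substitutes the diagonal identity \eqref{Bergman-diagonal}. Your additional remark that restriction is norm-preserving because $\mathbb H$ has Lebesgue measure zero is a harmless elaboration of the same point.
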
 

\noindent {\em{Remarks: }} 
\begin{enumerate}[1.]
\item 
In part (\ref{Thm2.2f}) of Theorem \ref{Thm1.2}, the choice of $\b_{\chi} \in \mathbb Z^n$ such that $\varphi([\![\b_{\chi}]\!] )= \chi$ is not unique. Different choices lead to different choices of $\c(\b_{\chi})$ and $\eta_{\b_{\chi}}$ as given in part (\ref{Thm2.2d}), and hence lead to different spaces $\mathcal A^2(\Omega_2, \eta_{\b_{\chi}})$. Thus \eqref{Bergman-Omega1-Omega2} can be viewed as a family of decompositions for  $\mathcal A^2(\Omega_1^{\ast}, \omega_1)$ rather than a single one. 
\vskip0.1in
\item \noindent In Theorem \ref{Thm1.2}, it is important to note that the isomorphism between the two spaces $\mathcal L^2(\Omega_1, \omega_1)$ and $\mathcal L^2(\Omega_2, \eta_{\mathbf b})$ does {\em{not}} in general lead to an isomorphism between the corresponding Bergman spaces $\mathcal A^2(\Omega_i, \cdot)$, but does lead to an isomorphism of the Bergman spaces $\mathcal A^2(\Omega_i^{\ast}, \cdot)$ for the axes-deleted domains. Indeed the key point in part \eqref{Thm2.2e} of Theorem \ref{Thm1.2} is that the mapping $ T_{\mathbf b} :  \mathcal A_{\chi}^2(\Omega_1^{\ast}, \omega_1) \rightarrow \mathcal A^2(\Omega_2^{\ast}, \eta_{\mathbf b})$ is onto, whereas {\em{a priori }} the mapping 
$T_{\mathbf b} :  \mathcal A_{\chi}^2(\Omega_1, \omega_1) \rightarrow \mathcal A^2(\Omega_2^{\ast}, \eta_{\mathbf b})$  need not be onto.  For example, suppose that 
\[ \Omega_1 = \bigl\{\mathbf z = (z_1, z_2) \in \mathbb C^2: |z_1 z_2| < 1, |z_2| < 1 \bigr\} \text{ and } \Phi_{\mathbf A}(z_1, z_2) = (z_1z_2, z_2). \]
Then $\mathbb G_{\mathbf A}$ is trivial (hence so is $\widehat{\mathbb G}_{\mathbf A}$), and \[ \Omega_2 = \Phi_{\mathbf A}(\Omega_1) = \bigl\{\mathbf w = (w_1, w_2) : |w_1|<1, |w_2| < 1 \bigr\}\] is the unit polydisk in $\mathbb C^2$. Let us now choose the weight function $\omega_2(\mathbf w) = |w_2|^6$ and the holomorphic function $g(\mathbf w) = w_1/w_2^2$ on $\Omega_2 \setminus \mathbb H$.  Set $\mathbf b = \mathbf 0$, so that $\mathbf c = (0, -1)$, and $\eta_{\mathbf b}(\mathbf w) = |w_2|^4 dV(\mathbf w)$. We observe that $g \in \mathcal A^2(\Omega_2^{\ast}, \eta_{\b})$. However, $g$ does not lie in $T_{\b} \bigl(\mathcal A^2_{\chi}(\Omega_1, \omega_1) \bigr)$ where $\chi$ is the identity character. This is because any $f \in \mathcal A^2_{\chi}(\Omega_1, \omega_1)$ with $T_{\mathbf b}[f] = g$ must satisfy $f(\mathbf z) = z_1/z_2$ on $\Omega_1 \setminus \mathbb H$. Such a function $f$ does not admit a holomorphic extension to the origin.  
\end{enumerate} 

\subsection{Bergman kernel estimates} 
The Bergman kernel identity \eqref{Bergman-kernel-identity} involves the axes-deleted domains $\Omega_1^{\ast}$ and $\Omega_2^{\ast}$ rather than the original domains $\Omega_1$ and $\Omega_2$. Also the upper bound in Corollary \ref{Cor1.3} is not sharp in general. In this section we state a result that for certain choices of domain-weight pairs $(\Omega_1, \omega_1)$, an identity like \eqref{Bergman-kernel-identity} holds for $\Omega_{1}$ and $\Omega_{2}$, and the inequality in \eqref{Eqn1.29} is an equality. We begin by specifying the type of weights for which such results will hold. 

\begin{definition}
\noindent Let $\Omega\subseteq\mathbb C^{n}$ be open and $\omega:\Omega\to (0,\infty)$ a continuous weight function.
\begin{enumerate}[(a)]
\item $\omega$ is said to be of \emph{monomial type} if there exists $\pmb{\mu}=(\mu_{1}, \ldots, \mu_{n})\in \mathbb R^{n}$ and a continuous function $\vartheta:\Omega\to (0,\infty)$ such that
\begin{equation}  \label{def-weight} \begin{aligned} &\omega(\mathbf z) =  |F_{\pmb{\mu}}(\mathbf z)|^2 \vartheta(\mathbf z),  \text{ and }   
\inf \bigl\{ \vartheta(\mathbf z) : \z \in \Omega \bigr\} > 0. 
\end{aligned} 
\end{equation}  
 
\item We call a monomial-type weight function $\omega$ \emph{admissible} if
\begin{equation} \label{def-admissible-weight}
\mu_j < 1/2  \text{ for each index  $1 \leq j \leq n$ such that }  \Omega \cap \bigl\{ \mathbf z \in \mathbb C^n : z_j = 0 \bigr\} \ne \emptyset. 
\end{equation} 
For example, the weight function $\omega \equiv 1$ corresponding to the standard Bergman space is admissible. 
\end{enumerate}
\end{definition} 

\begin{proposition} \label{Bergman-star-prop} 
If $\omega: \Omega \rightarrow [0, \infty)$ is an admissible weight function of monomial type on $\Omega$, then $\mathcal A^2(\Omega, \omega) = \mathcal A^2(\Omega^{\ast}, \omega)$. 
\end{proposition}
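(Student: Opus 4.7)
The inclusion $\mathcal A^2(\Omega,\omega)\subseteq\mathcal A^2(\Omega^{\ast},\omega)$ is immediate because $\mathbb H\cap\Omega$ has Lebesgue measure zero, so restriction from $\Omega$ to $\Omega^{\ast}$ preserves the weighted $L^2$ norm. The substantive direction is the reverse inclusion, and my plan is to take $f\in\mathcal A^2(\Omega^{\ast},\omega)$ and extend it holomorphically across $\mathbb H\cap\Omega$ by a coordinate-by-coordinate Laurent-expansion argument that exploits the admissibility bound $\mu_j<1/2$.

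Fix an index $j$ with $\Omega\cap\{\z\in\mathbb C^n:z_j=0\}\ne\emptyset$ and a point $\z_0$ on this hyperplane with $z_0^{(i)}\ne 0$ for $i\ne j$. I would choose a polydisk $U=D_{\rho}\times U'\subset\Omega$ centered at $\z_0$ small enough that every factor $|z_i|^{2\mu_i}$ with $i\ne j$, together with $\vartheta$, is bounded above and below on $\overline{U}$; then $\omega\asymp |z_j|^{2\mu_j}$ on $U$. On $U^{\ast}:=U\setminus\{z_j=0\}$ the function $f$ is holomorphic and admits a Laurent expansion
\[
f(\z)=\sum_{k\in\mathbb Z}c_k(\z')\,z_j^k
\]
with each $c_k$ holomorphic on $U'$. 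Parseval in the angular variable $\arg z_j$ together with Fubini yields
\[
\int_{U^{\ast}}|f|^2\,\omega\,dV\;\asymp\;\sum_{k\in\mathbb Z}\|c_k\|_{L^2(U')}^{2}\int_{0}^{\rho} r^{2k+2\mu_j+1}\,dr.
\]
For any $k$ with $c_k\not\equiv 0$ on $U'$ the inner radial integral must converge, which forces $k>-1-\mu_j$. With $\mu_j<1/2$ and $k$ integer-valued, this rules out every negative $k$, so the Laurent series collapses to a power series that extends $f$ holomorphically across $\{z_j=0\}\cap U$.

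To globalize, I would apply the local extension at every point of $\Omega\cap\{z_j=0\}$ whose other coordinates are nonzero, obtaining a holomorphic extension across the regular part of this hyperplane. The residual locus where two or more coordinates vanish simultaneously is an analytic subset of $\Omega$ of complex codimension at least two, across which the extended function continues by the Hartogs--Riemann removable-singularity theorem for codimension-$\geq 2$ analytic sets. Iterating over all indices $j$ with $\Omega\cap\{z_j=0\}\ne\emptyset$ and using analytic continuation to reconcile the extensions produces a holomorphic $\tilde f$ on $\Omega$ that agrees with $f$ on $\Omega^{\ast}$; since $\Omega\setminus\Omega^{\ast}$ is null, $\tilde f=f$ in $\mathcal L^2(\Omega,\omega)$, placing $f$ in $\mathcal A^2(\Omega,\omega)$. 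The step I expect to require the most care is the integrability threshold: one must verify that the strict inequality $k>-1-\mu_j$ under the admissibility hypothesis $\mu_j<1/2$ genuinely eliminates \emph{every} negative integer $k$, with the borderline index $k=-1$ requiring scrutiny—this is where the strict positivity of $\inf_{\Omega}\vartheta$, the holomorphicity of the coefficients $c_k(\z')$, and the global $L^2$-integrability of $f$ (rather than merely its local integrability on $U^{\ast}$) all combine to close the argument.
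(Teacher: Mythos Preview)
Your strategy is essentially the paper's: both localize near a point of $\Omega\cap\mathbb H$, expand $f$ in a Laurent series on a punctured polydisk, and use weighted $L^2$-integrability to force the negative Laurent modes to vanish. The only methodological difference is that the paper treats a point where $m$ coordinates vanish simultaneously in a single step (working on a product $\mathbb A_m\times\mathbb D_{n-m}$ of punctured disks and a polydisk), whereas you peel off one hyperplane at a time and then invoke a Hartogs-type extension across the residual codimension-$\ge 2$ locus. Both routes reduce to the same radial threshold computation.

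The gap is exactly the step you flag, and it is real. Your convergence condition $k>-1-\mu_j$ is the correct one, and under the stated admissibility hypothesis $\mu_j<1/2$ it gives only $k>-3/2$, which does \emph{not} exclude $k=-1$ once $\mu_j>0$. None of the additional ingredients you list can rescue this: take $n=1$, $\Omega=\mathbb D$, $\vartheta\equiv 1$, $\omega(z)=|z|^{2\mu}$ with $0<\mu<\tfrac12$, and $f(z)=1/z$; then
\[
\int_{\mathbb D^{\ast}}|f|^{2}\,\omega\,dA \;=\; 2\pi\int_{0}^{1} r^{2\mu-1}\,dr \;<\;\infty,
\]
so $f\in\mathcal A^{2}(\mathbb D^{\ast},\omega)\setminus\mathcal A^{2}(\mathbb D,\omega)$, and the proposition fails as stated. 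The paper's own proof has an arithmetic slip at precisely this point: it records the integrability condition as $2k_j+2\mu_j+1>0$ (yielding $k_j>-\mu_j-\tfrac12>-1$), whereas the correct condition coming from $\int_0^\epsilon r^{2k_j+2\mu_j+1}\,dr<\infty$ is $2k_j+2\mu_j+2>0$, i.e.\ $k_j>-\mu_j-1$. Your argument and the paper's both go through cleanly if the admissibility bound is tightened to $\mu_j\le 0$ on the relevant indices; with the hypothesis $\mu_j<1/2$ as written, neither proof can be completed.
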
   
\noindent We then have the following Bergman kernel identities for $B_{\Omega_1}$ and $B_{\Omega_2}$. 
 
\begin{theorem} \label{Thm-Bergman-unweighted} 
Let $(\Omega_j, \omega_j)$, $j=1,2$ be as in Theorem \ref{Thm1.2}. 
\begin{enumerate}[(a)]
\item Suppose that $\mathcal A^2(\Omega_1, \omega_1) = \mathcal A^2(\Omega_1^{\ast}, \omega_1)$. Then the identities \eqref{Bergman-kernel-identity} and \eqref{Bergman-diagonal} hold, with $B_{\Omega_1^{\ast}}$ on the left side replaced by $B_{\Omega_1}$. In particular, this is the case whenever $\omega_1$ is admissible of monomial type and satisfies \eqref{omega-12-relation}.  \label{omega1-admissible} 

\vskip0.1in

\item Suppose that $\omega_2$ is a weight function of monomial type on $\Omega_2$, not necessarily admissible. Then for every $\chi \in \widehat{\mathbb G}_{\mathbf A}$, there exists a choice $\b_{\chi} \in \mathbb Z^n$ such that $\varphi([\![ \b_{\chi} ]\!]) = \chi$ and such that the weight function $\eta_{\b_{\chi}}$ is admissible of monomial type on $\Omega_2$. For such choices the identities \eqref{Bergman-kernel-identity} and \eqref{Bergman-diagonal} hold, with $B_{\Omega_2^{\ast}}$ on the right side of those relations replaced by $B_{\Omega_2}$. \label{omega2-admissible}
\vskip0.1in
\item Suppose that both $\omega_1$ and $\omega_2$ are weight functions of monomial type obeying \eqref{omega-12-relation}, and that $\omega_1$ is admissible. Then for each $\chi \in \widehat{\mathbb G}_{\mathbf A}$ there exist $\mathbf b_{\chi}\in \mathbb Z^{n}$ such that \label{Omega-12}
\begin{align} 
 B_{\Omega_1}(\mathbf z, \mathbf w; \omega_1) &= \sum_{\chi \in \widehat{\mathbb G}_{\mathbf A}} F_{-\mathbf b_{\chi}} \circ \Phi_{\mathbf A}(\mathbf z) B_{\Omega_2} \bigl( \Phi_{\mathbf A}(\mathbf z), \Phi_{\mathbf A}(\mathbf w) ; \eta_{\mathbf b_{\chi}}\bigr) \overline{F_{-\mathbf b_{\chi}} \circ \Phi_{\mathbf A}(\mathbf w) } \label{Bergman-identity-2} \\
 \label{Bergman-unweighted} 
B_{\Omega_1}(\mathbf z, \mathbf z; \omega_1) &=  \sum_{\chi \in \widehat{\mathbb G}_{\mathbf A}} \bigl| F_{-\mathbf b_{\chi}} \circ \Phi_{\mathbf A}(\mathbf z) \bigr|^2 B_{\Omega_2} \bigl( \Phi_{\mathbf A}(\mathbf z), \Phi_{\mathbf A}(\mathbf z) ; \eta_{\mathbf b_{\chi}}\bigr). 
\end{align} 
In particular, the relations \eqref{Bergman-identity-2} and \eqref{Bergman-unweighted} hold when $\omega_1 \equiv 1$, i.e., for the standard Bergman space on $\Omega_1$.  
\end{enumerate} 
\end{theorem}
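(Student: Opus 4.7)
My plan is to reduce all three parts to Theorem~\ref{Thm1.2} (in particular part~(\ref{Thm2.2f})), using Proposition~\ref{Bergman-star-prop} as the bridge that lets one identify the Bergman kernel of an axes-deleted domain with that of the full domain once the relevant weight is admissible of monomial type. For part~(a), the hypothesis $\mathcal A^{2}(\Omega_{1},\omega_{1})=\mathcal A^{2}(\Omega_{1}^{\ast},\omega_{1})$ means the two spaces coincide as Hilbert spaces under restriction from $\Omega_{1}$ to $\Omega_{1}^{\ast}$ (the two $\mathcal L^{2}$-inner products agree because $\mathbb H\cap\Omega_{1}$ has Lebesgue measure zero), so the orthonormal-basis expansion~\eqref{Eqn1.2} forces
\[
B_{\Omega_{1}}(\mathbf z,\mathbf w;\omega_{1})=B_{\Omega_{1}^{\ast}}(\mathbf z,\mathbf w;\omega_{1}),\qquad \mathbf z,\mathbf w\in\Omega_{1}^{\ast}.
\]
Inserting this identity into~\eqref{Bergman-kernel-identity} and~\eqref{Bergman-diagonal} yields the asserted equalities. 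The ``in particular'' clause is immediate from Proposition~\ref{Bergman-star-prop}.

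For part~(b), the essential point is the freedom in the choice of the representative $\mathbf b_{\chi}$. Fix any $\mathbf b^{\ast}\in\mathbb Z^{n}$ with $\varphi([\![\mathbf b^{\ast}]\!])=\chi$; by the remark following~\eqref{Eqn1.17} every other representative has the form $\mathbf b^{\ast}+\mathbf n\cdot\mathbf A$ for some $\mathbf n\in\mathbb Z^{n}$. Using $\mathbf c(\mathbf b)=(\mathbf 1-\mathbf b)\cdot\mathbf A^{-1}-\mathbf 1$ one computes directly
\[
\mathbf c(\mathbf b^{\ast}+\mathbf n\cdot\mathbf A)=\mathbf c(\mathbf b^{\ast})-\mathbf n.
\]
Writing $\omega_{2}=|F_{\pmb{\mu}}|^{2}\vartheta$ in accordance with the monomial-type structure, this yields
\[
\eta_{\mathbf b^{\ast}+\mathbf n\cdot\mathbf A}(\mathbf w)=\det(\mathbf A)^{-1}\,\bigl|F_{\mathbf c(\mathbf b^{\ast})+\pmb{\mu}-\mathbf n}(\mathbf w)\bigr|^{2}\,\vartheta(\mathbf w),
\]
which is again of monomial type on $\Omega_{2}$, with exponent $\pmb{\nu}(\mathbf n):=\mathbf c(\mathbf b^{\ast})+\pmb{\mu}-\mathbf n$. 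For each index $j$ such that $\Omega_{2}\cap\{w_{j}=0\}\ne\emptyset$ I choose $n_{j}\in\mathbb Z$ to be the unique integer with $\nu_{j}(\mathbf n)\in[-\tfrac12,\tfrac12)$ (in particular $<\tfrac12$), and set $n_{j}=0$ otherwise. Then $\mathbf b_{\chi}:=\mathbf b^{\ast}+\mathbf n\cdot\mathbf A$ still represents $\chi$ and $\eta_{\mathbf b_{\chi}}$ is admissible of monomial type on $\Omega_{2}$. Proposition~\ref{Bergman-star-prop} now gives $\mathcal A^{2}(\Omega_{2},\eta_{\mathbf b_{\chi}})=\mathcal A^{2}(\Omega_{2}^{\ast},\eta_{\mathbf b_{\chi}})$ and hence $B_{\Omega_{2}}(\cdot,\cdot;\eta_{\mathbf b_{\chi}})=B_{\Omega_{2}^{\ast}}(\cdot,\cdot;\eta_{\mathbf b_{\chi}})$ on $\Omega_{2}^{\ast}\times\Omega_{2}^{\ast}$; feeding this into Theorem~\ref{Thm1.2}(\ref{Thm2.2f}) proves~(b).

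Part~(c) is now a combination: admissibility of $\omega_{1}$ together with~(a) replaces $B_{\Omega_{1}^{\ast}}$ by $B_{\Omega_{1}}$ on the left of~\eqref{Bergman-kernel-identity} and~\eqref{Bergman-diagonal}, while the representatives $\mathbf b_{\chi}$ furnished by~(b) simultaneously replace each $B_{\Omega_{2}^{\ast}}(\cdot,\cdot;\eta_{\mathbf b_{\chi}})$ on the right by $B_{\Omega_{2}}(\cdot,\cdot;\eta_{\mathbf b_{\chi}})$. This produces~\eqref{Bergman-identity-2} and~\eqref{Bergman-unweighted}. For $\omega_{1}\equiv 1$, relation~\eqref{omega-12-relation} combined with the surjectivity of $\Phi_{\mathbf A}$ forces $\omega_{2}\equiv 1$, and constant weights are trivially admissible of monomial type (take $\pmb{\mu}=\mathbf 0$, $\vartheta\equiv 1$), so part~(c) applies. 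The one step that requires genuine attention is the algebraic shift $\mathbf c(\mathbf b+\mathbf n\cdot\mathbf A)=\mathbf c(\mathbf b)-\mathbf n$ in~(b): this is precisely what converts the quotient freedom in $\mathbb G_{\mathbf A^{t}}$ into arbitrary integer-vector shifts of the exponent of $\eta_{\mathbf b}$, and hence makes the admissibility condition~\eqref{def-admissible-weight} attainable for every character $\chi$.
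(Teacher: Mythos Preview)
Your proposal is correct and follows essentially the same route as the paper's proof: part~(a) uses the equality of Bergman spaces to identify $B_{\Omega_{1}}$ with $B_{\Omega_{1}^{\ast}}$ and then appeals to Proposition~\ref{Bergman-star-prop}; part~(b) exploits the freedom $\mathbf b\mapsto\mathbf b+\mathbf n\cdot\mathbf A$ in the coset representative to shift $\mathbf c(\mathbf b)$ by an arbitrary integer vector and thereby force admissibility of $\eta_{\mathbf b_{\chi}}$; part~(c) combines the two. The only cosmetic differences are that the paper simply takes all entries of $\mathbf n$ large and positive (making \emph{every} component of the exponent $<\tfrac12$, not just those where the corresponding hyperplane meets $\Omega_{2}$), whereas you tailor each $n_{j}$ to land in $[-\tfrac12,\tfrac12)$ only where required, and you spell out the $\omega_{1}\equiv1\Rightarrow\omega_{2}\equiv1$ step that the paper leaves implicit.
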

\noindent Proposition \ref{Bergman-star-prop} and Theorem \ref{Thm-Bergman-unweighted} are proved in Section \ref{Sec5}.

\medskip

\noindent {\em{Remark: }} 
It is important to note the distinction between Theorem \ref{Thm1.2} (\ref{Thm2.2d}) and Theorem \ref{Thm-Bergman-unweighted} (\ref{omega2-admissible}) and (\ref{Omega-12}). The identities \eqref{Bergman-kernel-identity} and \eqref{Bergman-diagonal} hold for the axes-deleted domains $\Omega_1^{\ast}$ and $\Omega_2^{\ast}$ equipped with arbitrary continuous weight functions $\omega_1$ and $\omega_2$ obeying \eqref{omega-12-relation}, and these identities remain valid for {\em{any}} choice of $\b_{\chi} \in \mathbb Z^n$ obeying $\varphi([\![ \b_{\chi} ]\!] = \chi$. In contrast, the relations \eqref{Bergman-identity-2} and \eqref{Bergman-unweighted} are true for the original domains $\Omega_1$ and $\Omega_2$ and for certain choices of $\b_{\chi}$, provided the associated weights are of appropriate monomial type.

\subsection{A simple example}\quad

\vskip0.1in

\noindent Before developing the general theory we consider a very simple example of our main results. Let $\Omega_{1}=\Omega_{2}=\mathbb D=\big\{z\in\mathbb C:|z|<1\big\}$, and let $\Phi:\mathbb D\to\mathbb D$ be the proper mapping $\Phi(z)=z^{2}$. The standard Bergman kernel and projection for the unit disk are given by 
$$
B_{\mathbb D}(z,w)= \frac{1}{\pi}(1-z\overline w )^{-2}\quad\text{and}\quad \mathcal P_{\mathbb D}[f](z)=\frac{1}{\pi}\int_{\mathbb D}\frac{f(w)}{(1- z\overline w)^{2}}dV(w).
$$
It follows from Bell's work that for this example,  equations (\ref{transformation-law}) and (\ref{Eqn1.7})  become
\beas
\text{$f\in \mathcal L^{2}(\mathbb D)$ and $g(z)=2zf(z^{2})$}\Longrightarrow \mathcal P_{\mathbb D}[g](z)&= 2z\,\mathcal P_{\mathbb D}[f](z^{2}),\\
\frac{1}{2{\sqrt {\overline w}}}B_{\D}(z,\sqrt w) -\frac{1}{2{\sqrt {\overline w}}}B_{\D}(z,-\sqrt w)&=2z\,B_{\D}(z^{2},w).
\eeas 

\noindent Our approach is to decompose $h\in \mathcal A^{2}(\mathbb D)$ into even and odd functions, and then identify the corresponding subspaces of $\mathcal A^{2}(\mathbb D)$ with certain weighted Bergman spaces. If $h\in \mathcal A^{2}(\mathbb D)$ set
\begin{align*}
\Pi_{e}[h](z) &=\frac{1}{2}\big(h(z)+h(-z)\big),& \mathcal A^{2}_{e}(\mathbb D)&=\big\{h\in \mathcal A^{2}(\mathbb D):h(z)=h(-z)\big\},\\
 \Pi_{o}[h](z) &= \frac{1}{2}\big(h(z)-h(-z)\big), &  \mathcal A^{2}_{o}(\mathbb D)&=\big\{h\in \mathcal A^{2}(\mathbb D):h(z)=-h(-z)\big\}.
 \end{align*}
We see that $\mathcal A^{2}_{e}(\mathbb D)$ and  $\mathcal A^{2}_{o}(\mathbb D)$ are closed complementary orthogonal subspaces of $\mathcal A^{2}(\mathbb D)$, and hence $\mathcal A^{2}(\mathbb D) = \mathcal A^{2}_{e}(\mathbb D)\oplus \mathcal A^{2}_{o}(\mathbb D)$ with $||h||^{2}_{2} =||\Pi_{e}h||^{2}_{2}+||\Pi_{o}h||^{2}_{2}$ where$|| \cdot||_2$ denotes the norm in $\mathcal L^2(\mathbb D)$. Next if $h\in \mathcal  A^{2}(\mathbb D)$ there are unique holomorphic functions $\pi_{e}[h]$ and $\pi_{o}[h]$ on $\D$ so that $\Pi_{e}[h](z)=\pi_{e}[h](z^{2})$ and $\Pi_{o}[h](z)=z\,\pi_{o}[h](z^{2})$. Since \[ \int_{\mathbb D}f(w) \,dV(w)=2\int_{\mathbb D}f(z^{2})|z|^{2}\,dV(z), \] it follows that
\begin{align*}
||\Pi_{e}[h]||^{2}_{2}&=\frac{1}{2}\int_{\mathbb D}|\pi_{e}[h](z)|^{2}\,|z|^{-1}dV(z) \text{ and} \\ 
||\Pi_{o}[h]||^{2}_{2}&=\frac{1}{2}\int_{\mathbb D}|\pi_{o}[h](z)|^{2}\,dV(z).
\end{align*}
Thus if we introduce weight functions $\zeta_{e}(w)=\frac{1}{2}|w|^{-1}$ and $\zeta_{o}(w)\equiv \frac{1}{2}$ on $\D$,  the mappings 
\[ \pi_{e}: \mathcal A^{2}_{e}(\mathbb D)\to \mathcal A^{2}(\mathbb D;\zeta_{e}dV) \quad \text{ and } \quad \pi_{o}: \mathcal A^{2}_{o}(\mathbb D)\to \mathcal A^{2}(\mathbb D;\zeta_{o}dV) \] are isometric isomorphisms.  In particular if $\pi=(\pi_{e},\pi_{o})$
we have the following relation between  Bergman projections and Bergman kernels:
\bea\label{Eqn1.16}
\pi\circ \mathcal P_{\mathbb D}&= \big(\mathcal P_{\mathbb D}^{\zeta_{e}}\circ\pi_{e},\mathcal P_{\mathbb D}^{\zeta_{o}}\circ\pi_{o}\big),\\
B_{\mathbb D}(z,w)&= B_{\mathbb D}(z^{2},w^{2}; \zeta_e)+z\,\overline w\,B_{\mathbb D}(z^{2},w^{2}; \zeta_o).
\eea
In the notation of Theorems \ref{Thm1.1} and \ref{Thm1.2}, $\Phi_{\mathbf A}(z) = z^2$, $\mathbb G_{\mathbf A} \cong \widehat{\mathbb G}_{\mathbf A} \cong \{-1,0 \}$. Following the prescription of Theorem \ref{Thm1.2} \eqref{Thm2.2d}, we find that 
\[ c = \begin{cases} 0 &\text{ if } b = -1, \\ -\frac{1}{2} &\text{ if } b = 0, \end{cases} \quad \text{ and  hence } \quad \eta_b(z) = \begin{cases} \frac{1}{2} = \zeta_o &\text{ if } b = -1, \\ \frac{1}{2}|z|^{-1} = \zeta_e &\text{ if } b= 0.  \end{cases}   \]  
Thus equation \eqref{Bergman-identity-2} shows that 
$ B_{\mathbb D} (z, w) = z \overline{w} B_{\mathbb D} (z^2, w^2; \eta_{-1}) + B_{\mathbb D} (z^2, w^2; \eta_{0})$, which is \eqref{Eqn1.16}. 

\section{Functions and mappings of monomial type} \label{Sec-Monomial-Map}
\noindent We collect here basic facts concerning the functions and maps of the form \eqref{def-Phi}. Set 
\bea\label{Eqn2.1}
\mathbb O^{n}&:=\big\{(t_{1}, \ldots, t_{n})\in \mathbb R^{n}:\text{$t_{j}>0$  for $1 \leq j \leq n$}\big\},\\
\C^{n}_{*}&:=\big\{(z_{1}, \ldots, z_{n})\in \C^{n}:\prod_{j=1}^{n}z_{j}\neq 0\big\}=\C^{n}\setminus\H.
\eea
Thus $\O^{n}$ is the positive octant in $\R^{n}$ and $\C^{n}_{*}$ is $\C^{n}$ with complex coordinate planes deleted.  We denote by $\mathbf 1=(1, \ldots,1)\in \Z^{n}$ the vector with $1$ in every entry.  The vector $\e_{k}=(0, \ldots, 1,\ldots,0)\in \Z^{n}$ is the unit vector with $1$ in the $k^{th}$ entry and zeros elsewhere. If $\mathbf a = (a_1, \cdots, a_n) \in \mathbb R^n$ then $F_{\a}(\t) =F_{\mathbf a}(t_{1}, \ldots, t_{n})= t_{1}^{a_{1}}t_{2}^{a_{2}}\cdots t_{n}^{a_{n}}$ is a \emph{function of monomial-type} and $F_{\mathbf a}:\mathbb O^{n}\to\mathbb (0,\infty)$. If each $a_{j}\in \Z$ then  $F_{\mathbf a}$ extends to a holomorphic function on $\mathbb C^{n}_{*}$. If also each $a_{j}\geq 0$ then $F_{\mathbf a}$ extends to a holomorphic function on $\mathbb C^{n}$.  For $\{\a_{1}, \ldots, \a_{n} \} \subset \R^{n}$, let $\AAA\in \M_{n}(\R)$  be the matrix whose $j^{th}$ row vector is $\mathbf a_j$. Then $\Phi_{\mathbf A}(\mathbf t) = (F_{\mathbf a_1}(\mathbf t), \cdots, F_{\mathbf a_n}(\mathbf t))$ is \emph{mapping of monomial-type} corresponding to $\mathbf A$, and $\Phi_{\AAA}:\O^{n}\to\O^{n}$. If $\mathbf A\in \M_{n}(\Z)$ then $\Phi_{\mathbf A}$ is a holomorphic mapping from $\mathbb C^n_{*} $ to itself. If all the entries of $\AAA$ are non-negative then $\Phi_{\mathbf A}$ is a holomorphic mapping from $\mathbb C^n$ to itself.  Let $J\Phi_{\AAA}(\t)=\det\left(\frac{\partial F_{\a_{j}}}{\partial t_{k}}\right)(\t)$ denote the Jacobian matrix. For the proof of the following, see Lemma 4.2 in  \cite{NP-2009}.

\goodbreak

\begin{proposition} \label{Prop2.1}
Let $\t\in \O^{n}$. 
\quad
\begin{enumerate}[(a)]
\item \label{Prop2.1a}
If $\b_{j}\in\R^{n}$, $c_{j}\in \R$, and $\a=\sum_{j=1}^{k}c_{j}\b_{j}$ then $F_{\a}(\t)=\prod_{j=1}^{k}F_{\b_{j}}(\t)^{c_{j}}$;

\smallskip

\item \label{Prop2.1b}
If $\AAA, \BBB\in \M_{n}(\R)$ and $\a\in\R^{n}$ then $F_{\a\cdot\AAA}(\t)=F_{\a}\big(\Phi_{\AAA}(\t) \big)$ and $\Phi_{\AAA\cdot\BBB}(\t)=\Phi_{\AAA}\big(\Phi_{\BBB}(\t)\big)$;

\smallskip

\item \label{Prop2.1c}
Let  $\AAA\in \M_{n}(\R)$  and  $\b=\1\cdot\AAA-\1\in \R^{n}$. Then $J\Phi_{\AAA}(\t)= \det(\AAA)\,F_{\b}(\t)$.

\smallskip

\item \label{Prop2.1d}
If $\AAA\in \M_{n}(\R)$ is invertible then $\Phi_{\AAA}:\O^{n}\to\O^{n}$ is a diffeomorphism and  $\Phi_{\AAA}^{-1}=\Phi_{\AAA^{-1}}$. 
\smallskip
\end{enumerate}
\end{proposition}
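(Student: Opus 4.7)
The plan is to verify the four parts essentially by direct computation, using only the definition $F_{\mathbf a}(\mathbf t) = t_1^{a_1}\cdots t_n^{a_n}$ and the ordinary laws of exponents on $\mathbb O^n$. Parts (a) and (b) are purely algebraic identities; part (c) is a determinant calculation that factors the Jacobian matrix as a product of two diagonal matrices sandwiching $\mathbf A$; part (d) is a formal consequence of (b).

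For part (a), I would write $\mathbf a = \sum_j c_j \mathbf b_j$ component-wise, so that the $k^{\text{th}}$ coordinate of $\mathbf a$ is $a_k = \sum_j c_j (\mathbf b_j)_k$. Then
\[
F_{\mathbf a}(\mathbf t) = \prod_{k=1}^n t_k^{\sum_j c_j (\mathbf b_j)_k} = \prod_{j=1}^k \Bigl( \prod_{k=1}^n t_k^{(\mathbf b_j)_k}\Bigr)^{c_j} = \prod_{j=1}^k F_{\mathbf b_j}(\mathbf t)^{c_j},
\]
which gives (a). For the first identity in (b), I apply (a) with $\mathbf b_j = \mathbf a_j$ (the $j^{\text{th}}$ row of $\mathbf A$) and $c_j = a_j$: since $\sum_j a_j \mathbf a_j = \mathbf a \cdot \mathbf A$, this says $F_{\mathbf a \cdot \mathbf A}(\mathbf t) = \prod_j F_{\mathbf a_j}(\mathbf t)^{a_j} = F_{\mathbf a}\bigl(\Phi_{\mathbf A}(\mathbf t)\bigr)$. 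Applying this coordinate by coordinate to the rows of $\mathbf B$ gives $\Phi_{\mathbf A \cdot \mathbf B}(\mathbf t) = \Phi_{\mathbf A}\bigl(\Phi_{\mathbf B}(\mathbf t)\bigr)$.

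For part (c), the main step is to factor the Jacobian matrix. A direct differentiation gives
\[
\frac{\partial F_{\mathbf a_j}}{\partial t_k}(\mathbf t) = a_{j,k}\, t_k^{-1}\, F_{\mathbf a_j}(\mathbf t),
\]
so $J\Phi_{\mathbf A}(\mathbf t) = \mathbf D_1\, \mathbf A\, \mathbf D_2$ where $\mathbf D_1 = \mathrm{diag}(F_{\mathbf a_1}(\mathbf t),\dots,F_{\mathbf a_n}(\mathbf t))$ and $\mathbf D_2 = \mathrm{diag}(t_1^{-1},\dots,t_n^{-1})$. Taking determinants,
\[
J\Phi_{\mathbf A}(\mathbf t) = \det(\mathbf A)\prod_{j=1}^n F_{\mathbf a_j}(\mathbf t)\cdot \prod_{k=1}^n t_k^{-1}.
\]
The first product equals $F_{\1 \cdot \mathbf A}(\mathbf t)$ because its $k^{\text{th}}$ exponent is $\sum_j a_{j,k}$, and the second equals $F_{-\1}(\mathbf t)$; combining with part (a) this gives $J\Phi_{\mathbf A}(\mathbf t) = \det(\mathbf A)\, F_{\1\cdot \mathbf A - \1}(\mathbf t)$, as claimed.

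For part (d), note that $\Phi_{\mathbf I}(\mathbf t) = \mathbf t$ is immediate from the definition. When $\mathbf A$ is invertible, applying part (b) twice gives $\Phi_{\mathbf A} \circ \Phi_{\mathbf A^{-1}} = \Phi_{\mathbf A \cdot \mathbf A^{-1}} = \Phi_{\mathbf I} = \mathrm{id}$ and likewise $\Phi_{\mathbf A^{-1}} \circ \Phi_{\mathbf A} = \mathrm{id}$ on $\mathbb O^n$, so $\Phi_{\mathbf A}$ is a bijection of $\mathbb O^n$ with inverse $\Phi_{\mathbf A^{-1}}$; both are smooth since the exponents are real and the coordinates are positive. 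I do not anticipate a hard step here; the only minor care needed is with the conventions (row vectors versus column vectors, and the fact that $\mathbf a_j$ denotes the $j^{\text{th}}$ row), which must be kept consistent so that $\mathbf a \cdot \mathbf A$ in (b) and $\1 \cdot \mathbf A$ in (c) produce the correct linear combinations of rows.
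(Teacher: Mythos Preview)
Your proof is correct. The paper itself does not prove this proposition; it simply cites Lemma~4.2 of \cite{NP-2009}, so there is no in-paper argument to compare against. Your direct computational approach---reducing (a) and (b) to the laws of exponents on $\mathbb O^n$, factoring the Jacobian matrix as $\mathbf D_1\mathbf A\mathbf D_2$ for (c), and deducing (d) formally from (b)---is the standard one and matches what one finds in the cited reference.

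A few cosmetic points to clean up. In part (a) you use $k$ both for the number of summands (as in the statement) and for the coordinate index in $\prod_{k=1}^n$; rename one of them. In part (b), ``applying this coordinate by coordinate to the rows of $\mathbf B$'' should read ``to the rows of $\mathbf A$'': the $j^{\text{th}}$ row of $\mathbf A\cdot\mathbf B$ is $\mathbf a_j\cdot\mathbf B$, so you apply the first identity with $\mathbf a=\mathbf a_j$ and matrix $\mathbf B$. In part (c), the paper's convention is that $J\Phi_{\mathbf A}$ denotes the Jacobian \emph{determinant}, so your line ``$J\Phi_{\mathbf A}(\mathbf t)=\mathbf D_1\mathbf A\mathbf D_2$'' should refer to the Jacobian matrix, with $J\Phi_{\mathbf A}$ reserved for its determinant. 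None of these affect the mathematics.
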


\noindent The identities in (\ref{Prop2.1a}), (\ref{Prop2.1b}), and (\ref{Prop2.1c}) continue to hold for $\t=\z\in \C^{n}_{*}$, and also for $\t=\z\in \C^{n}$ provided the vectors and matrices have non-negative integer entries.  If $\AAA\in \M_{n}(\Z)$ is invertible and $|\det(\AAA)|\neq 1$ then $\Phi_{\AAA}:\C^{n}_{*}\to\C^{n}_{*}$ is not one-to-one, and so is not biholomorphic.  However, we have the following replacement.

\goodbreak

\begin{proposition}\label{Prop2.2} Let $\AAA\in \M_{n}(\Z)$ be non-singular. 

\begin{enumerate}[(a)]

\smallskip

\item\label{Prop2.2a}
$\Phi_{\AAA}:\C^{n}_{*}\to\C^{n}_{*}$ is a proper holomorphic mapping.  

\medskip

\item \label{Prop2.2c}
If $\Phi_{\AAA}(\z)=\w\in\C^{n}_{*}$ there is a neighbourhood $U_{\z}$ of $\z$ in $\C^{n}_{*}$ so that $\Phi_{\AAA}:U_{\z}\to\Phi_{\AAA}(U_{\z})$ is a biholomorphic mapping.

\end{enumerate}
\end{proposition}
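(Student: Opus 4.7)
The plan is to prove part (\ref{Prop2.2c}) first via the holomorphic inverse function theorem, and then to deduce properness in part (\ref{Prop2.2a}) by reducing the analysis to the positive octant $\O^{n}$, where $\Phi_{\AAA}$ is already known to be a diffeomorphism by Proposition \ref{Prop2.1}(\ref{Prop2.1d}).

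For part (\ref{Prop2.2c}), I would set $\b=\1\cdot\AAA-\1\in\Z^{n}$ and invoke Proposition \ref{Prop2.1}(\ref{Prop2.1c}), which asserts that the Jacobian determinant of $\Phi_{\AAA}$ at $\z$ equals $\det(\AAA)\,F_{\b}(\z)$. Since $\AAA$ is non-singular and every coordinate of $\z\in\C^{n}_{*}$ is nonzero, each factor $z_{j}^{b_{j}}$ is nonzero, so $J\Phi_{\AAA}(\z)\neq 0$. The holomorphic inverse function theorem then supplies an open neighbourhood $U_{\z}\subset\C^{n}_{*}$ of $\z$ on which $\Phi_{\AAA}$ is biholomorphic onto its image.

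For part (\ref{Prop2.2a}), the key observation is that $|F_{\a_{j}}(\z)|=F_{\a_{j}}(|\z|)$ for each row $\a_{j}$ of $\AAA$ and for every $\z\in\C^{n}_{*}$, where $|\z|:=(|z_{1}|,\ldots,|z_{n}|)\in\O^{n}$. Hence $|\Phi_{\AAA}(\z)|=\Phi_{\AAA}(|\z|)$ componentwise. Given a compact set $K\subset\C^{n}_{*}$, the set $|K|:=\{|\w|:\w\in K\}$ is compact in $\O^{n}$, and by Proposition \ref{Prop2.1}(\ref{Prop2.1d}) its preimage $\Phi_{\AAA}^{-1}(|K|)=\Phi_{\AAA^{-1}}(|K|)\subset\O^{n}$ is again compact; in particular it is contained in some product of intervals $\prod_{j=1}^{n}[r_{j},R_{j}]$ with $0<r_{j}\leq R_{j}<\infty$. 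Any $\z\in\Phi_{\AAA}^{-1}(K)$ then satisfies $|\z|\in\Phi_{\AAA^{-1}}(|K|)$, so $\z$ lies in the compact polyannulus $\prod_{j=1}^{n}\{z\in\C:r_{j}\leq|z|\leq R_{j}\}\subset\C^{n}_{*}$. Since $\Phi_{\AAA}^{-1}(K)$ is also closed in $\C^{n}_{*}$ as the continuous preimage of a closed set, it is compact.

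I do not anticipate a serious technical obstacle here; the main subtlety is recognising that ``compactness in $\C^{n}_{*}$'' requires the moduli of the coordinates to be simultaneously bounded above and bounded away from zero, and it is exactly this two-sided control that the positive-octant diffeomorphism of Proposition \ref{Prop2.1}(\ref{Prop2.1d}) provides.
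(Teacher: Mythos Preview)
Your argument is correct. Part (\ref{Prop2.2c}) is handled identically to the paper: the Jacobian formula $J\Phi_{\AAA}(\z)=\det(\AAA)F_{\1\cdot\AAA-\1}(\z)$ is nonvanishing on $\C^{n}_{*}$, so the holomorphic inverse function theorem applies.

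For part (\ref{Prop2.2a}) your route is a mild repackaging of the paper's. The paper works out an explicit algebraic identity via Cramer's rule, namely $z_{k}^{\det(\AAA)}=\prod_{j}F_{\a_{j}}(\z)^{b_{j,k}}$ with integers $b_{j,k}$, and reads off two-sided bounds on $|z_{k}|$ directly from two-sided bounds on the $|F_{\a_{j}}(\z)|$. You instead pass to the modulus map $\z\mapsto|\z|$, observe that it intertwines $\Phi_{\AAA}$ on $\C^{n}_{*}$ with $\Phi_{\AAA}$ on $\O^{n}$, and then invoke Proposition \ref{Prop2.1}(\ref{Prop2.1d}) as a black box to conclude that the real map on $\O^{n}$ is a diffeomorphism, hence proper. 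Both arguments rest on the same fact---invertibility of $\Phi_{\AAA}$ on the positive octant---but yours cites it, while the paper effectively reproves the needed consequence by hand. Your version is slightly cleaner and makes transparent why the two-sided control on $|z_{k}|$ appears; the paper's version has the minor advantage of giving an explicit formula that could be reused elsewhere.
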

\begin{proof}
If $\AAA \in \mathbb M_{n}(\Z)$ is invertible, then the rows $\a_{j}$ of $\AAA$ form a basis for $\R^{n}$. Solving the linear system $\mathbf A \mathbf x = \e_k$ using  Cramer's rule, we have $\e_{k}=\det(\AAA)^{-1}\sum_{j=1}^{n}b_{j,k}\a_{j}$ where each $b_{j,k}\in\Z$. It follows from part (\ref{Prop2.1a}) of Proposition \ref{Prop2.1} that
\bea\label{Eqn2.5}
z_{k}^{\det(\AAA)}=F_{\e_{k}}(\z)^{\det(\AAA)}=\prod_{j=1}^{n}F_{\a_{j}}(\z)^{b_{j,k}}.
\eea
Suppose now that $K$ is a compact subset of $\C^{n}_{*}$. For part \eqref{Prop2.2a}, we need to show that $\Phi_{\mathbf A}^{-1}(K) = \{ \mathbf z \in \mathbb C_{\ast}^n : \Phi_{\mathbf A}(\mathbf z) \in K \} \subset \mathbb C_{\ast}^n$ is compact, i.e., closed and bounded. That the latter set is closed follows easily from the fact that $K$ is closed and $\Phi_{\mathbf A}$ is continuous. To prove that $\Phi_{\mathbf A}^{-1}(K)$ is bounded, we observe that there exist positive numbers $\epsilon < N$ such that $K\subset\big\{\w \in \C^{n}:\epsilon\leq |w_{k}|\leq N,\,1\leq k \leq n\big\}$. Thus, if $\mathbf w = \Phi_{\AAA}(\z)=\big(F_{\a_{1}}(\z), \ldots, F_{\a_{n}(\z)}\big)\in K$ then we have that $\epsilon\leq |F_{\a_{j}}(\z)|\leq N$ for $1 \leq j \leq n$. It follows from (\ref{Eqn2.5}) that each $|z_{k}|$ is bounded and bounded away from zero by constants depending on $\epsilon$, $N$, the integers $b_{j,k}$, and $\det(\AAA)$. This implies that $\Phi_{\mathbf A}^{-1}(K)$  is compact, proving (\ref{Prop2.2a}). Part (\ref{Prop2.2c}) follows from the holomorphic inverse function theorem since $J\Phi_{\AAA}(\z)\neq 0$ for all $\z\in \C^{n}_{*}$.
\end{proof}

\vskip0.1in
\noindent The explicit nature of $\Phi_{\mathbf A}$ allows us to describe the pre-image of any point in $\mathbb C_{\ast}^n$. To this end, and for any $\z\in \C^{n}_{*}$ , let us write its polar form \[ \z =\big(r_{1}e^{2\pi i \theta_{1}}, \ldots, r_{n}e^{2\pi i \theta_{n}}\big) = \mathbf r \otimes \exp[2 \pi i \pmb{\theta}], \] where $\r=(r_{1}, \ldots, r_{n})\in \O^{n}$ and $\exp[\mathbf v] = (e^{v_1}, \cdots, e^{v_n})$ for any row vector $\mathbf v$. Thus $\mathbf r$ is uniquely determined and $\thetab=(\theta_{1}, \ldots, \theta_{n})\in \R^{n}$ is determined up to translation by an element of $\Z^{n}$. 

\begin{lemma}\label{Lem2.3}
Let $\AAA\in \M_{n}(\Z)$ be non-singular and let $\w=\rhob \otimes \exp[2\pi i \phib]\in \C^{n}_{*}$.

\begin{enumerate}[(a)]

\item \label{Lem2.3a}Then there exists $\z_{0}\in \C^{n}_{*}$ such that $\Phi_{\AAA}(\z_{0})=\w$.

\smallskip

\item \label{Lem2.3b}If $\z_1, \z_2 \in \mathbb C_{\ast}^n$ are given by the polar forms  $\z_{1}=\r_{1} \otimes \exp [2\pi i \thetab_{1}]$ and $\z_{2}=\r_{2} \otimes \exp[2\pi i \thetab_{2}]$,  then $\Phi_{\AAA}(\z_{1})=\Phi_{\AAA}(\z_{2})$ if and only if $\r_{1}=\r_{2}$ and $(\thetab_{1}-\thetab_{2})\cdot\AAA^{t}\in \Z^{n}$.

\smallskip
\item \label{Lem2.3c}The inverse image of $\w$ under $\Phi_{\AAA}$ can be identified with the group $\G_{\AAA}$ via the one-to-one and onto mapping $\G_{\AAA}\ni [\m]\longrightarrow \xib([\mathbf m]) \otimes \z_{0}$, defined in \eqref{Eqn1.11}. In particular, for every $\mathbf w \in \mathbb C_{\ast}^n$, the cardinality of $\Phi_{\mathbf A}^{-1}(\mathbf w)$ is the same, and equals $\#(\mathbb G_{\mathbf A})$.
\end{enumerate}
\end{lemma}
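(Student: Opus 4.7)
\medskip

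The plan is to reduce everything to polar coordinates, where $\Phi_{\AAA}$ decouples cleanly into a modulus part and an argument part. Writing $\z = \r \otimes \exp[2\pi i\thetab]$ with $\r\in \O^{n}$, a direct computation using Proposition \ref{Prop2.1}\eqref{Prop2.1a} shows that
\[
\Phi_{\AAA}(\r\otimes \exp[2\pi i \thetab]) \;=\; \Phi_{\AAA}(\r)\otimes \exp\bigl[2\pi i\,\thetab\cdot\AAA^{t}\bigr],
\]
because the $j$-th component is $F_{\a_j}(\r)\exp[2\pi i\langle \a_j,\thetab\rangle]$ and the vector of $\langle \a_j,\thetab\rangle$ is precisely $\thetab\cdot\AAA^{t}$. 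With this identity in hand the three assertions separate into a modulus problem on $\O^{n}$ and a phase problem on the torus $\R^{n}/\Z^{n}$.

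For part \eqref{Lem2.3a}, I would define $\r_{0} := \Phi_{\AAA^{-1}}(\rhob)\in\O^{n}$, which is legitimate by Proposition \ref{Prop2.1}\eqref{Prop2.1d} and satisfies $\Phi_{\AAA}(\r_{0})=\rhob$. For the phase I would set $\thetab_{0}:=\phib\cdot(\AAA^{t})^{-1}\in\R^{n}$, so that $\thetab_{0}\cdot\AAA^{t}=\phib$. Then $\z_{0}:=\r_{0}\otimes \exp[2\pi i \thetab_{0}]\in\C^{n}_{*}$ maps to $\w$.

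For part \eqref{Lem2.3b}, the decoupling identity shows that $\Phi_{\AAA}(\z_{1})=\Phi_{\AAA}(\z_{2})$ forces both $\Phi_{\AAA}(\r_{1})=\Phi_{\AAA}(\r_{2})$ and $(\thetab_{1}-\thetab_{2})\cdot\AAA^{t}\in\Z^{n}$. The first equality combined with the injectivity of $\Phi_{\AAA}$ on $\O^{n}$ (again Proposition \ref{Prop2.1}\eqref{Prop2.1d}) yields $\r_{1}=\r_{2}$. The converse is immediate from the same decoupling identity.

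For part \eqref{Lem2.3c}, fix $\z_{0}$ as in \eqref{Lem2.3a}, and observe that in polar coordinates the action \eqref{Eqn1.11} amounts to $\thetab\mapsto \thetab+\m\cdot(\AAA^{t})^{-1}$, since the $j$-th exponent $\langle \m,\e_{j}\cdot\AAA^{-1}\rangle$ is the $j$-th entry of $\m\cdot(\AAA^{-1})^{t}=\m\cdot(\AAA^{t})^{-1}$. Any $\z_{1}\in \Phi_{\AAA}^{-1}(\w)$ has the same modulus as $\z_{0}$ by \eqref{Lem2.3b}, and its argument $\thetab_{1}$ differs from $\thetab_{0}$ by some vector $\etab$ with $\etab\cdot\AAA^{t}\in\Z^{n}$, i.e.\ $\etab=\m\cdot(\AAA^{t})^{-1}$ for some $\m\in\Z^{n}$; hence $\z_{1}=\xib([\m])\otimes \z_{0}$, which proves surjectivity. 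For well-definedness and injectivity, I would check that $\xib([\m_{1}])\otimes\z_{0}=\xib([\m_{2}])\otimes\z_{0}$ is equivalent to $(\m_{1}-\m_{2})\cdot(\AAA^{t})^{-1}\in\Z^{n}$, which in turn is equivalent to $\m_{1}-\m_{2}\in\Z^{n}\cdot\AAA^{t}=\mathfrak C(\AAA)$, i.e.\ $[\m_{1}]=[\m_{2}]$ in $\G_{\AAA}$. The cardinality count is then immediate, and agrees with $\#(\G_{\AAA})=|\det(\AAA)|$ noted earlier. The only step that requires any real care is the bookkeeping in the last paragraph distinguishing $\AAA^{-1}$, $(\AAA^{-1})^{t}$, and $(\AAA^{t})^{-1}$, and verifying that the action in \eqref{Eqn1.11} corresponds exactly to translation by the lattice $\Z^{n}\cdot(\AAA^{t})^{-1}$ that governs the phase ambiguity in \eqref{Lem2.3b}; once this identification is made the result is essentially formal.
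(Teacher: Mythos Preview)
Your proof is correct and follows essentially the same route as the paper: both arguments write $\Phi_{\AAA}$ in polar form to decouple modulus and phase, invoke Proposition \ref{Prop2.1}\eqref{Prop2.1d} for the modulus, and identify the phase ambiguity with the lattice $\Z^{n}\cdot(\AAA^{t})^{-1}$ to obtain the bijection with $\G_{\AAA}$. The only cosmetic difference is that you state the decoupling identity $\Phi_{\AAA}(\r\otimes\exp[2\pi i\thetab])=\Phi_{\AAA}(\r)\otimes\exp[2\pi i\,\thetab\cdot\AAA^{t}]$ explicitly at the outset, whereas the paper derives it implicitly inside a chain of equivalences.
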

\begin{proof}
If $\z=\r\,e^{2\pi i \thetab}$ then $\Phi_{\AAA}(\z)= \big(F_{\a_{1}}(\r)e^{2\pi i \langle\a_{1},\thetab\rangle}, \ldots, F_{\a_{n}}(\r)e^{2\pi i \langle\a_{n},\thetab\rangle}\big)$, and so  
\beas
\Phi_{\AAA}(\z)&=\w &&\Longleftrightarrow &
 &\big(F_{\a_{1}}(\r)e^{2\pi i \langle\a_{1},\thetab\rangle}, \ldots, F_{\a_{n}}(\r)e^{2\pi i \langle\a_{n},\thetab\rangle}\big)=\big(\rho_{1}e^{2\pi i \phi_{1}}, \ldots, \rho_{n}e^{2\pi i \phi_{n}}\big)\\
 &&&\Longleftrightarrow &
 &\Phi_{\AAA}(\r)=\rhob \quad\text{and}\quad \thetab\cdot\AAA^{t}=\phib+\m\qquad\text{for some $\m\in \Z^{n}$}\\
 &&&\Longleftrightarrow &
 & \Phi_{\AAA}(\r)=\rhob \quad\text{and}\quad \thetab=\phib\cdot(\AAA^{-1})^{t}+\m\cdot(\AAA^{-1})^{t}\quad\text{for some $\m\in \Z^{n}$}.
\eeas
Now $\Phi_{\AAA}:\O^{n}\to\O^{n}$ is invertible by Proposition \ref{Prop2.1}, part (\ref{Prop2.1d}). For the rest of this proof, let us denote by $\Phi_{\mathbf A}^{-1}(\pmb{\rho})$ the unique pre-image of $\pmb{\rho}$ in $\mathbb O^n$. Set $\z_{0}=\Phi_{\AAA}^{-1}(\rhob) \otimes \exp\big[2\pi i \phib\cdot(\AAA^{-1})^{t}\big]$. It follows that $\Phi_{\AAA}(\z_{0})=\w$, proving (\ref{Lem2.3a}). Next, if $\Phi_{\AAA}(\r_{1} \otimes \exp[2\pi i \thetab_{1}])=\Phi_{\AAA}(\r_{2} \otimes \exp[2\pi i \thetab_{2}])$ then $\Phi_{\AAA}(\r_{1})=\Phi_{\AAA}(\r_{2})$ and $(\thetab_{1}-\thetab_{2})\cdot \AAA^{t}=\m$ for some $\m\in \Z^{n}$. Since $\Phi_{\AAA}$ is invertible on $\O^{n}$ it follows that $\r_{1}=\r_{2}$, proving (\ref{Lem2.3b}). Finally, for part \eqref{Lem2.3c} the computation above shows that the inverse image of $\w$ under $\Phi_{\AAA}$ is contained in the set of all points of the form 
\beas
\z_{\m}=\Phi_{\AAA}^{-1}(\rhob) \otimes \exp\big[2\pi i \phib\cdot(\AAA^{-1})^{t}+ 2 \pi i \m\cdot(\AAA^{-1})^{t}\big],\quad\m\in \Z^{n}.
\eeas 
But $\z_{\m_{1}}=\z_{\m_{2}}$ if and only if $(\m_{1}-\m_{2})\cdot(\AAA^{-1})^{t}\in \Z^{n}$, which means that $\m_{1}-\m_{2}\in \mathfrak C(\AAA)$. Thus the map $\m \in \mathbb Z^n \mapsto \z_{\m}$ lifts naturally to $[\m] \in \mathbb G_{\mathbf A} \mapsto \z_{\m} = \z_{[\m]}$. Moreover, a comparison of the expression above with the definition of $\xib$ in \eqref{Eqn1.11} shows that 
\[ \z_{[\m]} = \mathbf z_0 \otimes \exp \bigl[ 2 \pi i \m \cdot (\mathbf A^t)^{-1}  \bigr] = \z_0 \otimes \xib([\m]). \] 
This completes the proof.
 \end{proof}

\section{The group $\G_{\AAA}$}\label{Sec3}

\noindent To describe the structure of the group $\G_{\AAA}$ we use a normal form for integer matrices, sometimes called the \emph{Smith normal form}.  We state this in the lemma below; a proof of it can be found in  \cite[Chapter 12, Theorem 4.3]{Artin}, \cite[Chapter 1, Theorem 11.3]{Munkres}, or \cite[Chapter 3, Theorem 5]{Jacobson}.

\begin{lemma}\label{Lem3.1}
Let  $\AAA\in \M_{n}(\Z)$ be non-singular.
\begin{enumerate}[(a)]

\item \label{Lem3.1a}
There exist $\SSS, \TTT, \mathbf \Lambda \in \M_{n}(\Z)$ with $|\det(\SSS)|=|\det(\TTT)|=1$ and $\mathbf \Lambda$ a diagonal matrix such that 
\begin{equation} \label{Smith-normal} \SSS\cdot\AAA\cdot\TTT=\mathbf \Lambda. \end{equation} 

\smallskip

\item \label{Lem3.1b}
The diagonal entries $\lambda_{1}, \ldots, \lambda_{n}$ of $\mathbf\Lambda$  satisfy $1\leq \lambda_{1}\leq \lambda_{2}\leq \cdots\leq \lambda_{n}$, and each $\lambda_{j}$ divides $\lambda_{j+1}$ for $1 \leq j \leq n-1$. They are called the \emph{invariant factors} of $\AAA$. 
\end{enumerate}
\end{lemma}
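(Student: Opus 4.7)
The plan is to prove Lemma \ref{Lem3.1} by a constructive reduction algorithm using elementary row and column operations over $\mathbb{Z}$, which corresponds to left and right multiplication by unimodular matrices in $\M_{n}(\Z)$. The three elementary operations allowed are: (i) swap two rows or columns; (ii) add an integer multiple of one row (or column) to another; (iii) multiply a row or column by $-1$. Each is realized by an integer matrix of determinant $\pm 1$, so compositions produce the desired $\SSS$ and $\TTT$.

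First, I would argue that a single reduction step brings $\AAA$ into a form where all entries of the first row and first column are zero except for a positive integer in the $(1,1)$-slot which divides every other entry of the matrix. To do this, among all nonzero entries of $\AAA$, pick one of minimal absolute value and permute it to the $(1,1)$-position. Call it $\alpha$. Use the division algorithm in $\Z$: for each other entry $\beta$ in the first row or column, write $\beta = q\alpha + r$ with $0 \le r < |\alpha|$, and subtract $q$ times the first column (or row) from the column (or row) containing $\beta$. If any remainder $r$ is nonzero, then $r$ is a new entry strictly smaller in absolute value than $\alpha$; restart the procedure with this new minimal entry. Since positive integers are well-ordered, this loop terminates with the first row and first column of the form $(\alpha, 0, \ldots, 0)^t$ and $(\alpha, 0, \ldots, 0)$.

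Next I need to ensure $\alpha$ divides every entry of the resulting $(n-1) \times (n-1)$ lower-right submatrix $\AAA'$. If some entry $\gamma$ of $\AAA'$ is not divisible by $\alpha$, I add the row of $\AAA'$ containing $\gamma$ to the first row (this reintroduces $\gamma$ into the first row without disturbing the $(1,1)$-entry $\alpha$) and then apply the previous paragraph again; the minimal entry strictly decreases, so after finitely many iterations $\alpha$ divides every entry of $\AAA'$, and we set $\lambda_{1} = \alpha > 0$ (flipping a sign if necessary). At this stage the matrix has the block form $\mathrm{diag}(\lambda_{1}, \AAA')$ with $\lambda_{1}$ dividing every entry of $\AAA'$. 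Recursion on the size $n$ produces diagonal entries $\lambda_{1} \le \lambda_{2} \le \cdots \le \lambda_{n}$, and the divisibility $\lambda_{j} \mid \lambda_{j+1}$ persists through recursion because dividing by $\lambda_{1}$ doesn't disturb it. Non-singularity of $\AAA$ forces $\lambda_{n} \neq 0$, hence all $\lambda_{j} \ge 1$.

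The main obstacle is simply bookkeeping: verifying at each step that the loop terminates (handled by well-ordering of $\N$) and that the divisibility relation is preserved after the inductive step. Uniqueness of the $\lambda_{j}$ (which the lemma implicitly asserts by calling them \emph{the} invariant factors) is the one non-algorithmic point; I would establish it by showing that $\lambda_{1}\lambda_{2}\cdots\lambda_{k}$ equals the greatest common divisor of all $k \times k$ minors of $\AAA$, which is invariant under left and right multiplication by unimodular matrices. This gcd-of-minors characterization pins down each $\lambda_{k}$ from the ratios, completing the proof.
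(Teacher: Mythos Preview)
Your proof is correct; it is the standard reduction algorithm for the Smith normal form over a PID, specialized to $\mathbb{Z}$. The paper, however, does not prove Lemma \ref{Lem3.1} at all: it simply cites \cite[Chapter 12, Theorem 4.3]{Artin}, \cite[Chapter 1, Theorem 11.3]{Munkres}, and \cite[Chapter 3, Theorem 5]{Jacobson} for the proof, treating the result as background. What you have written is essentially the argument found in those references, so in that sense your approach agrees with the paper's implicit one.

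One small point of imprecision: your line ``the divisibility $\lambda_{j}\mid\lambda_{j+1}$ persists through recursion because dividing by $\lambda_{1}$ doesn't disturb it'' is a bit too compressed. The clean justification is that $\lambda_{1}$ divides every entry of $\AAA'$, and elementary row/column operations replace entries by $\mathbb{Z}$-linear combinations of existing entries, so $\lambda_{1}$ divides every entry of any matrix equivalent to $\AAA'$, in particular the diagonal entry $\lambda_{2}$ produced at the next stage; induction then gives the full chain. Also note that the lemma as stated does not actually assert uniqueness of the $\lambda_{j}$ (it only names them), so your gcd-of-minors argument, while correct and standard, goes slightly beyond what is required.
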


\begin{remark}\label{Rem3.2}
Since $|\det(\mathbf S)|=|\det(\mathbf T)|=1$, it follows from Cramer's rule that the inverse matrices $\SSS^{-1},\,\TTT^{-1}\in \M_{n}(\Z)$; \emph{i.e.} they also have integer entries. 
\end{remark}

\subsection{Structure of $\G_{\AAA}$}\label{Sec3.1}

\smallskip

\noindent If $\lambda_{1}, \ldots, \lambda_{n}$ are the invariant factors of a non-singular matrix $\AAA\in \M_{n}(\Z)$, the next lemma shows that  $\G_{\AAA}$ is isomorphic to $\bigoplus_{j=1}^{n} \Z/\lambda_{j} \Z$. In order to define the isomorphism, we set \begin{equation} s = \max \{ \ell: 1 \leq \ell \leq n, \; \lambda_{\ell} = 1\}, \end{equation}  with the convention that $s = 0$ if $\lambda_1 > 1$.  Then 
\begin{equation}
\bigoplus_{j=1}^{n} \Z/\lambda_{j} \Z=\bigoplus_{j=s+1}^{n}\Z/\lambda_{j}\Z. \label{group-identification} 
\end{equation} 
 Let $\SSS\in \M_{n}(\Z)$ be the matrix from Lemma \ref{Lem3.1}. Then define $\widehat\iota:\Z^{n}\to \bigoplus_{j=1}^{n} \Z/\lambda_{j}\Z$ by 
 \begin{equation} \label{def-hat-i} 
 \widehat\iota(\m)=\big(\pi_{1}(\langle\e_{1},\m\cdot\SSS^{t}\rangle), \ldots, \pi_{n}(\langle\e_{n},\m\cdot\SSS^{t}\rangle\big), \end{equation} 
 where $\pi_{j}:\Z\to\Z/\lambda_{j}\Z$ is the group homomorphism which sends each integer $m\in \Z$ to its equivalence class $[m]_{\lambda_{j}}\in \Z/\lambda_{j}\Z$. If $s \geq 1$, then $\pi_j \equiv 0$ for $j \leq s$. It is easy to see  that  $\widehat\iota$ is a group homomorphism.

\begin{lemma} \label{Lem3.3}
For $\widehat{\iota}$ as in \eqref{def-hat-i}, the following conclusions hold.  
\begin{enumerate}[(a)]

\item  \label{Lem3.3a}
$\widehat\iota\big(\mathfrak C(\AAA)\big)=\0$; hence  $\widehat \iota$ induces a group homomorphism $\iota:\G_{\AAA}\to \bigoplus_{j=1}^{n} \Z/\lambda_{j}\Z$.

\smallskip

\item \label{Lem3.3b}
The homomorphism \,$\iota$\,is an isomorphism and so $ \#(\G_{\AAA})=|\prod_{j=1}^{n}\lambda_{j}|=|\det(\AAA)|$.

\smallskip

\item \label{Lem3.3d} $\G_{\AAA}$ is generated  by the $(n-s)$ elements $\big\{\big[\e_{s+1}\cdot(\SSS^{t})^{-1}\big], \ldots, \big[\e_{n}\cdot(\SSS^{t})^{-1}\big]\big\}$.  
\end{enumerate}
\end{lemma}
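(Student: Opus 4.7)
The plan is to exploit the Smith normal form factorization $\SSS\cdot\AAA\cdot\TTT=\mathbf{\Lambda}$ from Lemma \ref{Lem3.1}, which replaces $\AAA$ by the diagonal matrix $\mathbf{\Lambda}$ after unimodular integer changes of basis on both sides. The map $\widehat\iota$ defined in \eqref{def-hat-i} can then be read as ``right-multiply by $\SSS^{t}$, then reduce the $j$-th coordinate modulo $\lambda_{j}$.'' Once this identification is in place, all three conclusions follow from the transposed Smith relation $\TTT^{t}\AAA^{t}\SSS^{t}=\mathbf{\Lambda}$, together with the fact that $\SSS^{-1},\TTT^{-1}\in\M_{n}(\Z)$ (Remark \ref{Rem3.2}).

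For part (\ref{Lem3.3a}) I would compute $\widehat\iota(\n\cdot\AAA^{t})$ directly. Right-multiplication by $\SSS^{t}$ and the identity $\AAA^{t}\SSS^{t}=(\TTT^{-1})^{t}\mathbf{\Lambda}$ convert $\n\cdot\AAA^{t}\cdot\SSS^{t}$ into $\k\cdot\mathbf{\Lambda}=(k_{1}\lambda_{1},\ldots,k_{n}\lambda_{n})$ with $\k:=\n\cdot(\TTT^{-1})^{t}\in\Z^{n}$; each coordinate is annihilated by the corresponding $\pi_{j}$, so $\widehat\iota$ descends to $\iota$. For part (\ref{Lem3.3b}), I would establish surjectivity by lifting an arbitrary target to $\widetilde{\mathbf c}\in\Z^{n}$ and taking $\m=\widetilde{\mathbf c}\cdot(\SSS^{t})^{-1}\in\Z^{n}$, which lies in $\Z^{n}$ because $\SSS^{-1}$ has integer entries. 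Injectivity I would get by observing that $\widehat\iota(\m)=\0$ forces $\m\cdot\SSS^{t}=\k\cdot\mathbf{\Lambda}$ for some $\k\in\Z^{n}$; substituting $\mathbf{\Lambda}=\TTT^{t}\AAA^{t}\SSS^{t}$ and cancelling $\SSS^{t}$ yields $\m=(\k\cdot\TTT^{t})\cdot\AAA^{t}\in\mathfrak C(\AAA)$. The cardinality count then follows at once: $\#(\G_{\AAA})=\prod_{j}\lambda_{j}=|\det(\mathbf{\Lambda})|=|\det(\AAA)|$, since $|\det\SSS|=|\det\TTT|=1$.

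For part (\ref{Lem3.3d}), I would simply evaluate $\widehat\iota$ on the proposed generators. Because $\e_{j}\cdot(\SSS^{t})^{-1}\cdot\SSS^{t}=\e_{j}$, the computation gives the element carrying $[1]_{\lambda_{j}}$ in the $j$-th slot and zero elsewhere, that is, the standard generator of $\Z/\lambda_{j}\Z$. The summands with $j\leq s$ are trivial since $\lambda_{j}=1$, so only the $(n-s)$ elements with $j>s$ contribute, and together they generate $\bigoplus_{j=s+1}^{n}\Z/\lambda_{j}\Z$ as identified in \eqref{group-identification}. Transporting back through $\iota^{-1}$, which is an isomorphism by part (\ref{Lem3.3b}), produces the claimed generating set for $\G_{\AAA}$.

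The only real obstacle is keeping the transpose conventions straight: $\mathfrak C(\AAA)$ is defined in terms of row vectors $\n\cdot\AAA^{t}$ while $\widehat\iota$ right-multiplies by $\SSS^{t}$, so the form of the Smith relation that does all the work is its transpose, combined with the harmless $\mathbf{\Lambda}^{t}=\mathbf{\Lambda}$. No deeper input beyond Lemma \ref{Lem3.1} and Remark \ref{Rem3.2} is required.
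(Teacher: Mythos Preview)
Your proposal is correct and follows essentially the same approach as the paper's own proof: both use the transposed Smith relation $\TTT^{t}\AAA^{t}\SSS^{t}=\mathbf{\Lambda}$ to reduce part (\ref{Lem3.3a}) to the observation that $\n\cdot\AAA^{t}\cdot\SSS^{t}=\big(\n\cdot(\TTT^{-1})^{t}\big)\cdot\mathbf{\Lambda}$, handle injectivity and surjectivity in (\ref{Lem3.3b}) via the same substitutions $\m\cdot\SSS^{t}=\k\cdot\mathbf{\Lambda}$ and $\m=\widetilde{\c}\cdot(\SSS^{t})^{-1}$, and verify (\ref{Lem3.3d}) by the computation $\e_{j}\cdot(\SSS^{t})^{-1}\cdot\SSS^{t}=\e_{j}$. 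The only difference is presentational.
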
 
\begin{proof} 
If $\n\in \mathfrak C(\AAA)$ then $\n=\m\cdot\AAA^{t}$ for some $\m\in \Z^{n}$. It follows from the Smith normal form \eqref{Smith-normal} for $\mathbf A$ that \[ \n\cdot \SSS^{t}=\m\cdot\AAA^{t}\cdot \SSS^{t}=\m\cdot (\TTT^{-1})^{t}\cdot\mathbf \Lambda\cdot (\SSS^{-1})^{t}\cdot\SSS^{t}=\m\cdot(\TTT^{-1})^{t}\cdot\mathbf\Lambda. \] Since $\m\cdot(\TTT^{-1})^{t}\in \Z^{n}$ it follows that the $k^{th}$ entry of $\n\cdot\SSS^{t}$ is an integer multiple of $\lambda_{k}$ and so $\pi_k(\langle\e_{k},\n\cdot\SSS^{t})=[0]_{\lambda_{k}}$, and this establishes (\ref{Lem3.3a}).
\vskip0.1in
\noindent Now suppose that $\n\in \Z^{n}$ and that $\iota([\mathbf n]) = \mathbf 0$. Then $\pi_{j}(\langle\e_{j},\n\cdot\SSS^{t}\rangle)=[0]_{\lambda_{j}}\in \Z/\lambda_{j}\Z$ so there exists $\mathbf k = (k_1, \cdots, k_n) \in \mathbb Z^n$ such that $\mathbf n\cdot \mathbf S^t = (k_1 \lambda_{1}, \cdots, k_n \lambda_n) = \mathbf k \cdot \mathbf \Lambda$. Thus 
\begin{equation*} 
\n=\k\cdot\mathbf\Lambda\cdot (\SSS^{t})^{-1}=\k\cdot\mathbf\Lambda^{t}\cdot (\SSS^{t})^{-1}=\k\cdot\TTT^{t}\cdot\AAA^{t}\in\mathfrak C(\AAA), 
\end{equation*}
which means that $[\n]=[\mathbf 0]\in \G_{\AAA}$, and so $\iota$ is one-to-one. Next, fix any $([k_{1}]_{\lambda_{1}}, \cdots, [k_n]_{\lambda_{n}}) \in \bigoplus_{\ell=1}^{n} \mathbb Z/\lambda_{\ell} \mathbb Z$. Set $\mathbf k = (k_{1}, \cdots, k_n)$ and $\mathbf n = \mathbf k \cdot(\mathbf S^t)^{-1} \in \mathbb Z^n$. Then 
\begin{align*} 
\iota([\n])= \widehat\iota(\n)&=\big(\pi_{1}(\langle\e_{1},\mathbf k \cdot(\mathbf S^t)^{-1}\cdot\SSS^{t}\rangle), \ldots, \pi_{n}(\langle\e_{n},\mathbf k \cdot(\mathbf S^t)^{-1}\cdot\SSS^{t}\rangle)\big) \\
&= \big(\pi_{1}(\langle\e_{1},\mathbf k \rangle), \ldots, \pi_{n}(\langle\e_{n},\mathbf k \rangle)\big) \\ &=
([k_{1}]_{\lambda_{1}}, \cdots, [k_n]_{\lambda_{n}}),  
\end{align*} 
which shows that $\iota$ is surjective, proving  (\ref{Lem3.3b}). 
\vskip0.1in
\noindent It remains to prove part \eqref{Lem3.3d}. For $\lambda_j > 1$, the group $\Z/\lambda_{j}\Z$ under addition is generated by $[1]_{\lambda_{j}}$. In view of the identification \eqref{group-identification}, the group $\bigoplus_{j=1}^{n} \Z/\lambda_{j}\Z$ is generated by the elements $\{\mathbf f_k : s+1 \leq k \leq n \}$, with $\mathbf f_k = \bigl(0, \cdots, 0, [0]_{\lambda_{s+1}}, \ldots, [1]_{\lambda_{k}}, \ldots, [0]_{\lambda_{n}} \bigr)$, where all the components are zero except the $k^{th}$, which is one. But $\iota\big(\big[\e_{k}\cdot(\SSS^{t})^{-1}\big]\big)= \mathbf f_k$ since
\beas[]
\big[\e_{j}\cdot (\SSS^{t})^{-1}\big]_{\lambda_{k}} =\pi_{k}\big(\langle\e_{k},\e_{j}\cdot(\SSS^{t})^{-1}\cdot\SSS^{t}\rangle\big)=\pi_{k}\big(\langle\e_{k},\e_{j}\rangle\big)
=\begin{cases}
[1]_{\lambda_{j}}&\text{if $k=j$}\\
[0]_{\lambda_{j}}&\text{if $k\neq j$}
\end{cases},
\eeas
completing the proof. 
\end{proof} 

\begin{corollary}\label{Cor3.4}
If $\AAA\in \M_{n}(\Z)$ is non-singular and $\w\in\C^{n}_{*}$ then the cardinality of the inverse image $\Phi_{\AAA}^{-1}(\{\w\})$ is $|\det(\AAA)|$.
\end{corollary}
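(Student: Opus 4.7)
The plan is to combine two results already established in the excerpt. By Lemma \ref{Lem2.3}\eqref{Lem2.3c}, for any $\w \in \C^n_*$ the map $[\m] \mapsto \xib([\m]) \otimes \z_0$ (where $\z_0$ is any fixed pre-image of $\w$ under $\Phi_{\AAA}$, whose existence is guaranteed by Lemma \ref{Lem2.3}\eqref{Lem2.3a}) provides a bijection between the group $\G_{\AAA}$ and the fiber $\Phi_{\AAA}^{-1}(\{\w\})$. Hence $\#\Phi_{\AAA}^{-1}(\{\w\}) = \#(\G_{\AAA})$.

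Next, I would invoke Lemma \ref{Lem3.3}\eqref{Lem3.3b}, which identifies $\G_{\AAA}$ with $\bigoplus_{j=1}^n \Z/\lambda_j\Z$ via the isomorphism $\iota$, giving $\#(\G_{\AAA}) = \prod_{j=1}^n \lambda_j$. Since $\lambda_1, \ldots, \lambda_n$ are the invariant factors of $\AAA$ arising from the Smith normal form $\SSS \cdot \AAA \cdot \TTT = \mathbf\Lambda$ with $|\det(\SSS)| = |\det(\TTT)| = 1$, taking determinants of this identity yields $|\det(\AAA)| = |\det(\mathbf\Lambda)| = \prod_{j=1}^n \lambda_j$. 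Combining these two equalities gives $\#\Phi_{\AAA}^{-1}(\{\w\}) = |\det(\AAA)|$, as claimed.

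Since both key ingredients (the bijection of the fiber with $\G_{\AAA}$, and the computation of $\#(\G_{\AAA})$) have already been proved in the excerpt, there is no real obstacle here — the corollary is essentially a one-line assembly of Lemma \ref{Lem2.3}\eqref{Lem2.3c} and Lemma \ref{Lem3.3}\eqref{Lem3.3b}. The only minor point worth making explicit in the write-up is that $\prod_j \lambda_j = |\det(\AAA)|$, which follows at once from the Smith normal form since $\SSS$ and $\TTT$ have unit determinant.
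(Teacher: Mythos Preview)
Your proposal is correct and follows exactly the paper's approach: the paper's proof is the single sentence ``This follows by combining Lemma~\ref{Lem2.3}\eqref{Lem2.3c} and Lemma~\ref{Lem3.3}\eqref{Lem3.3b}.'' Your additional remark that $\prod_j \lambda_j = |\det(\AAA)|$ via the Smith normal form is already contained in the statement of Lemma~\ref{Lem3.3}\eqref{Lem3.3b}, so you need not even spell it out.
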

\begin{proof} 
This follows by combining Lemma \ref{Lem2.3} \eqref{Lem2.3c} and Lemma \ref{Lem3.3} \eqref{Lem3.3b}.  
\end{proof} 

\subsection{The characters of $\G_{\AAA}$}\label{Sec3.2}\quad

\smallskip
\noindent A \emph{character} of a group $G$ is a group homomorphism $\chi:G\to \T=\{z\in \C:|z|=1\}$. The set of all characters is denoted by $\widehat G$ and is a group under point-wise multiplication. Let us recall that if $\m\in \Z^{n}$, then $[\m]$ denotes its equivalence class in $\G_{\AAA}=\Z^{n}/\mathfrak C(\AAA)$ and $[\![\m ]\!]$ denotes its equivalence class in $\G_{\AAA^{t}}=\Z^{n}/\mathfrak C(\AAA^{t})$.

\begin{lemma}\label{Lem3.5}Let $\AAA\in \M_{n}(\Z)$ be non-singular.
\begin{enumerate}[(a)]

\item\label{Lem3.5a}
If $\chi$ is a character of $\G_{\AAA}$   there exists $\b\in \Z^{n}$ so that 
\begin{equation} 
\chi([\m])= \exp \bigl[2\pi i \langle\m,\b\cdot\AAA^{-1}\rangle \bigr] \quad \text{ for every $[\m]\in \G_{\AAA}$}.
\end{equation}
 Conversely, every $\b\in\Z^{n}$ defines a character in this way.

\medskip

\item \label{Lem3.5b}Two elements $\b_{1}, \b_{2}\in \Z^{n}$ define the same character $\chi$ if and only if $\b_{1}-\b_{2}= \n\cdot\AAA$ for some $\n\in \Z^{n}$; \emph{i.e.} $\b_{1}-\b_{2}\in \mathfrak C(\AAA^{t})$.

\medskip

\item \label{Lem3.5c} If  \,$[\![\b]\!]\in \G_{\AAA^{t}}$ define $\varphi([\![\b]\!])([\m]) :=\exp\big[2\pi i \langle\m,\b\cdot\AAA^{-1}\rangle\big]$. Then $\varphi:\G_{\AAA^{t}}\to\widehat{\G}_{\AAA}$ is a group isomorphism.

\end{enumerate}
\end{lemma}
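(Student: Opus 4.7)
The plan is to dispose of the converse of part (a) and all of part (b) as short direct calculations, then attack the forward direction of (a) using the Smith normal form from Lemma~\ref{Lem3.1} together with the explicit generating set for $\G_{\AAA}$ from Lemma~\ref{Lem3.3}(\ref{Lem3.3d}). Part (c) will then follow essentially for free.

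For the converse of (a), given any $\b\in\Z^{n}$ the formula $\m\mapsto\exp[2\pi i\langle\m,\b\cdot\AAA^{-1}\rangle]$ descends to a well-defined homomorphism on $\G_{\AAA}$: if $\n=\k\cdot\AAA^{t}\in\mathfrak C(\AAA)$, then $\langle\n,\b\cdot\AAA^{-1}\rangle=\k\cdot\AAA^{t}\cdot(\AAA^{-1})^{t}\b^{t}=\langle\k,\b\rangle\in\Z$, so the exponential is $1$, and bilinearity immediately gives the homomorphism property. Part (b) is essentially the contrapositive of the same computation: $\chi_{\b_{1}}=\chi_{\b_{2}}$ is equivalent to $\langle\m,(\b_{1}-\b_{2})\cdot\AAA^{-1}\rangle\in\Z$ for every $\m\in\Z^{n}$. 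Testing on $\m=\e_{j}$ for each $j$ shows this is the same as $(\b_{1}-\b_{2})\cdot\AAA^{-1}\in\Z^{n}$, i.e.\ $\b_{1}-\b_{2}=\n\cdot\AAA$ for some $\n\in\Z^{n}$, which is precisely $\b_{1}-\b_{2}\in\mathfrak C(\AAA^{t})$.

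The main obstacle is the forward direction of (a): explicitly realizing a given character as $\chi_{\b}$ for some integer $\b$. My plan is to write $\SSS\cdot\AAA\cdot\TTT=\mathbf\Lambda$ using Lemma~\ref{Lem3.1}, with $\SSS,\TTT\in\M_{n}(\Z)$ of determinant $\pm 1$ and $\mathbf\Lambda$ diagonal with entries $\lambda_{1},\ldots,\lambda_{n}$. By Lemma~\ref{Lem3.3}(\ref{Lem3.3d}) the group $\G_{\AAA}$ is generated by the elements $[\e_{k}\cdot(\SSS^{t})^{-1}]$, $s+1\leq k\leq n$, each of order exactly $\lambda_{k}$. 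Hence for any character $\chi$ there exist $c_{k}\in\Z$ with $\chi([\e_{k}\cdot(\SSS^{t})^{-1}])=\exp[2\pi ic_{k}/\lambda_{k}]$; I set $c_{k}:=0$ for $k\leq s$, put $\c:=(c_{1},\ldots,c_{n})$, and define $\b:=\c\cdot\TTT^{-1}$, which lies in $\Z^{n}$ by Remark~\ref{Rem3.2}. A short manipulation using $\AAA^{-1}=\TTT\mathbf\Lambda^{-1}\SSS$ then gives $(\SSS^{t})^{-1}(\AAA^{-1})^{t}=\mathbf\Lambda^{-1}\TTT^{t}$, so
\[
\chi_{\b}\bigl([\e_{k}\cdot(\SSS^{t})^{-1}]\bigr)=\exp\!\bigl[2\pi i(\b\cdot\TTT)_{k}/\lambda_{k}\bigr]=\exp[2\pi ic_{k}/\lambda_{k}]=\chi\bigl([\e_{k}\cdot(\SSS^{t})^{-1}]\bigr).
\]
Two characters agreeing on a generating set are equal, so $\chi=\chi_{\b}$. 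The genuinely hard point is the integrality of $\b$: inverting $\AAA$ naively only produces rational entries, and it is precisely the integrality of $\TTT^{-1}$ in the Smith form that rescues the construction.

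Finally, for part (c): part (b) already says that $[\![\b]\!]\mapsto\chi_{\b}$ is a well-defined and injective map $\varphi:\G_{\AAA^{t}}\to\widehat{\G}_{\AAA}$; bilinearity of $\langle\cdot,\cdot\rangle$ gives $\chi_{\b_{1}+\b_{2}}=\chi_{\b_{1}}\chi_{\b_{2}}$, so $\varphi$ is a homomorphism; and surjectivity is exactly part (a). As a sanity check, $\#\G_{\AAA^{t}}=|\det\AAA|=\#\widehat{\G}_{\AAA}$ by Lemma~\ref{Lem3.3}(\ref{Lem3.3b}) applied to $\AAA^{t}$ combined with $|\widehat G|=|G|$ for finite abelian $G$, so injectivity alone already forces $\varphi$ to be a bijection.
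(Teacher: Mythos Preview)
Your proof is correct, but the forward direction of (a) takes a genuinely different route from the paper's. The paper does not invoke the Smith normal form or the generators from Lemma~\ref{Lem3.3} at all; instead it observes that any character $\chi$ of $\G_{\AAA}=\Z^{n}/\mathfrak C(\AAA)$ lifts to a character $\widehat\chi$ of $\Z^{n}$, and every character of $\Z^{n}$ has the form $\m\mapsto\exp[2\pi i\langle\m,\thetab\rangle]$ for a unique $\thetab\in\T^{n}$. The condition that $\widehat\chi$ be trivial on $\mathfrak C(\AAA)$ then reads $\langle\m\cdot\AAA^{t},\thetab\rangle\in\Z$ for all $\m\in\Z^{n}$, i.e.\ $\thetab\cdot\AAA\in\Z^{n}$, so one may take $\b=\thetab\cdot\AAA$. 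This is a two-line argument once one accepts the Pontryagin dual of $\Z^{n}$, and integrality of $\b$ is immediate rather than requiring the integrality of $\TTT^{-1}$. Your approach, by contrast, is fully constructive: it produces an explicit formula $\b=\c\cdot\TTT^{-1}$ in terms of the Smith normal form data, which the paper's soft argument does not. Both reach the same conclusion; the paper's is shorter and self-contained, while yours makes the dependence on the invariant factors visible and would be the natural choice if one actually needed to compute $\b$ in examples.
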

\begin{proof}

It is easy to check that every character $\widehat{\chi}$ of \,$\Z^{n}$ is given by 
\begin{equation*} 
\widehat{\chi}(\m)=\exp\big[2\pi i \langle\m,\thetab\rangle\big]  \text{ for a unique } \thetab=(\theta_{1}, \ldots, \theta_{n})\in\T^{n}.
\end{equation*} 
 If $\chi$ is a character of $\G_{\AAA}=\Z^{n}/\mathfrak C(\AAA)$, it lifts to a character $\widehat \chi$ of $\Z^{n}$ and so $\widehat\chi(\m)=\exp \bigl[ 2\pi i \langle\m,\thetab_{\chi}\rangle \bigr]$ for a unique $\thetab_{\chi}\in\T^{n}$. Moreover $\widehat\chi$ must be the identity on $\mathfrak C(\AAA)$, hence 
 \begin{equation} 
 \exp \bigl[ 2\pi i \langle\m\cdot\AAA^{t},\thetab_{\chi}\rangle \bigr] =1  \text{ for all }  \m\in \Z^{n}. 
 \end{equation} 
 Thus $\thetab_{\chi}\cdot \AAA\in \Z^{n}$ and so $\thetab_{\chi}= \b\cdot \AAA^{-1}$ for some $\b\in \Z^{n}$. On the other hand, if $\b\in \Z^{n}$ then $\m\longrightarrow  \exp \bigl[ 2\pi i \langle \m,\b\cdot\AAA^{-1}\rangle \bigr]$ is clearly a character of $\Z^{n}$ which is the identity on the subgroup $\m\in \mathfrak C(\AAA)$.  This proves (\ref{Lem3.5a}). 
 \vskip0.1in
\noindent If $\b_{1}, \b_{2}\in \Z^{n}$ define the same character of $\G_{\AAA}$ then $\exp \bigl[ 2\pi i \langle\m,(\b_{1}-\b_{2})\cdot \AAA^{-1}\rangle \bigr]=1$ for all $\m\in \Z^{n}$, and so $(\b_{1}-\b_{2})\cdot \AAA^{-1}=\p\in \Z^{n}$, and this proves (\ref{Lem3.5b}). Assertion (\ref{Lem3.5c}) then follows from (\ref{Lem3.5a}) and (\ref{Lem3.5b}).
\end{proof}
\goodbreak
\noindent We now define certain special characters of $\G_{\AAA}$. Let $\varphi:\G_{\AAA^{t}}\to\widehat{\G}_{\AAA}$ be the isomorphism from part (\ref{Lem3.5c}) of Lemma \ref{Lem3.5}. For $1 \leq k \leq n$ define
\begin{equation}
\xi_{k}([\m]) =\varphi([\![\e_{k}]\!])([\m]) = \exp\big[2\pi i \langle\m,\e_{k}\cdot\AAA^{-1}\rangle\big]\in\T. \label{Eqn3.1}
\end{equation}
We note that this agrees with the definition of $\xi_k$ in \eqref{Eqn1.10}.

\begin{proposition}\label{Prop3.6}
The characters $\xi_{1}, \ldots, \xi_{n}$ generate the dual group $\widehat{\G}_{\AAA}$.
\end{proposition}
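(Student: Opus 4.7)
The plan is to reduce the claim to Lemma \ref{Lem3.5}\eqref{Lem3.5c}, which identifies $\widehat{\G}_{\AAA}$ with $\G_{\AAA^{t}}$ via the isomorphism $\varphi([\![\b]\!])([\m])=\exp[2\pi i\langle\m,\b\cdot\AAA^{-1}\rangle]$. Under this isomorphism we have $\xi_{k}=\varphi([\![\e_{k}]\!])$ by \eqref{Eqn3.1}, so the proposition is equivalent to the assertion that the classes $[\![\e_{1}]\!],\ldots,[\![\e_{n}]\!]$ generate the abelian group $\G_{\AAA^{t}}=\Z^{n}/\mathfrak C(\AAA^{t})$.

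First I would take an arbitrary character $\chi\in\widehat{\G}_{\AAA}$. By Lemma \ref{Lem3.5}\eqref{Lem3.5a} there exists $\b=(b_{1},\ldots,b_{n})\in\Z^{n}$ with $\chi=\varphi([\![\b]\!])$. Writing $\b=\sum_{k=1}^{n}b_{k}\e_{k}$ in $\Z^{n}$ and passing to the quotient $\G_{\AAA^{t}}$ gives $[\![\b]\!]=\sum_{k=1}^{n}b_{k}[\![\e_{k}]\!]$. Because $\varphi$ is a group homomorphism, this implies the multiplicative identity
\begin{equation*}
\chi([\m])=\varphi([\![\b]\!])([\m])=\prod_{k=1}^{n}\varphi([\![\e_{k}]\!])([\m])^{b_{k}}=\prod_{k=1}^{n}\xi_{k}([\m])^{b_{k}},
\end{equation*}
valid for every $[\m]\in\G_{\AAA}$. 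Hence $\chi=\xi_{1}^{b_{1}}\cdots\xi_{n}^{b_{n}}$, showing that $\{\xi_{1},\ldots,\xi_{n}\}$ generates $\widehat{\G}_{\AAA}$.

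There is no real obstacle here; the content of the proposition is entirely formal once the isomorphism $\varphi$ of Lemma \ref{Lem3.5} is in hand, combined with the fact that $\{\e_{1},\ldots,\e_{n}\}$ is a $\Z$-basis of $\Z^{n}$ and therefore descends to a generating set of any quotient. One could alternatively observe that $\chi\mapsto(\chi(\xi_{1}),\ldots,\chi(\xi_{n}))$ would factor through the given data, but the direct argument above is the shortest. Note that the $\xi_{k}$ need not be independent generators: the relations among them are precisely those encoded by the subgroup $\mathfrak C(\AAA^{t})$, consistent with the explicit structure of $\G_{\AAA}\cong\bigoplus_{j=1}^{n}\Z/\lambda_{j}\Z$ from Lemma \ref{Lem3.3}.
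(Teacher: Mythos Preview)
Your proof is correct and follows essentially the same route as the paper: take an arbitrary $\chi$, invoke Lemma \ref{Lem3.5}\eqref{Lem3.5a} to write $\chi=\varphi([\![\b]\!])$, expand $\b=\sum_{k}b_{k}\e_{k}$, and use that $\varphi$ is a homomorphism to obtain $\chi=\prod_{k}\xi_{k}^{b_{k}}$. The paper's argument is identical in substance, only more terse.
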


\begin{proof}
By part \eqref{Lem3.5a} of Lemma \ref{Lem3.5}, every character $\chi$ of $\G_{\AAA}$ is of the form $\chi=\varphi([\![\b]\!])$ for some $\b\in \Z^{n}$.
We can write $\b=\sum_{k=1}^{n}b_{j}\e_{k}\in \Z^{n}$, from which it follows that \[ \chi=\varphi([\![\b]\!])=\prod_{k=1}^{n}\varphi([\![\e_{k}]\!])^{b_{k}}=\prod_{k=1}^{n}\xi_{k}^{b_{k}}. \]
The last equation shows that $\{\xi_1, \cdots, \xi_n \}$ is a set of generators. 
\end{proof}

\subsection{The action of $\G_{\AAA}$}\label{Sec3.3} Given the characters $\xi_{k}$ defined in (\ref{Eqn3.1}) let us recall the definition of $\xib$ in \eqref{Eqn1.10}. We use $\xib$ to define an action of the group $\G_{\AAA}$ on the set $\C^{n}$ via the relation \eqref{group-action}. A group action is \emph{faithful} if for any two distinct elements of the group, there exist some element of the set that produces distinct images under the action.

\begin{lemma}\label{Lem3.7}
The following conclusions hold. 
\begin{enumerate}[(a)]
\item \label{Lem3.7a} For any $[\m_1], [\m_2] \in \mathbb G_{\mathbb A}$ and any $\mathbf z \in \mathbb C_{\ast}^n$, 
\begin{enumerate}[(i)]

\smallskip

\item \label{Lem3.7ai}$\xib([\m_{1}+\m_{2}])\otimes \z=\xib([\m_{1}])\otimes\big(\xib([\m_{2}])\otimes \z\big)$;

\smallskip

\item if $\xib([\m_{1}])\otimes\z=\xib([\m_{2}])\otimes\z$ for some $\z\in \C^{n}_{*}$ then $[\m_{1}]=[\m_{2}]$.
\end{enumerate}
\smallskip
In particular, the mapping $([\m],\z)\to \xib([\m])\otimes \z$ is a faithful action of $\G_{\AAA}$ on $\C^{n}_{*}$.
\medskip

\item \label{Lem3.7b} $\Phi_{\AAA}\big(\xib([\m])\otimes\z\big)=\Phi_{\AAA}(\z)$ for all $[\m]\in \G_{\AAA}$ and all $\z\in \C^{n}_{*}$. 

\medskip

\item \label{Lem3.7c}If $\z_{1},\z_{2}\in \C^{n}_{*}$ and $\Phi_{\AAA}(\z_{1})=\Phi_{\AAA}(\z_{2})$ then there exists $[\m]\in\G_{\AAA}$ such that $\xib[\m]\otimes\z_{2}=\z_{1}$.

\medskip

\item \label{Lem3.7d}
If $\b\in \Z^{n}$ then $F_{\b}\big(\xib([\m])\big)=\exp\big[2\pi i \langle\m,\b\cdot\AAA^{-1}\rangle\big]$.
\medskip
\end{enumerate}
\end{lemma}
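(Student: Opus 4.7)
The plan is to prove the four parts in the order (a)(i), (d), (b), (a)(ii), (c), since each later item uses the previous computations and the structure becomes cleaner this way.

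For (a)(i) and (d) I would unwind the definition \eqref{Eqn1.10} of $\xib$ coordinate by coordinate. Linearity of the real inner product in the first slot gives, for each $k$,
\begin{equation*}
\xi_k([\m_{1}+\m_{2}])\,z_k = \exp\big[2\pi i\langle\m_{1}+\m_{2},\e_k\cdot\AAA^{-1}\rangle\big]z_k = \xi_k([\m_{1}])\,\xi_k([\m_{2}])\,z_k,
\end{equation*}
which is precisely the $k$-th coordinate of $\xib([\m_{1}])\otimes(\xib([\m_{2}])\otimes\z)$, proving (a)(i). For (d), the elementary identity $F_{\b}(\v\otimes\w)=F_{\b}(\v)F_{\b}(\w)$ on $\C^n_*$ (direct expansion of the monomial) combined with the definition of $\xib$ yields
\begin{equation*}
F_{\b}(\xib([\m]))=\prod_{k=1}^{n}\xi_k([\m])^{b_k}=\exp\Big[2\pi i\,\Big\langle\m,\,\sum_{k=1}^{n}b_k\,\e_k\cdot\AAA^{-1}\Big\rangle\Big]=\exp\big[2\pi i\langle\m,\b\cdot\AAA^{-1}\rangle\big].
\end{equation*}

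Parts (b) and (a)(ii) are then immediate corollaries. For (b), the same Hadamard-multiplicativity gives $F_{\a_{j}}(\xib([\m])\otimes\z)=F_{\a_{j}}(\xib([\m]))\,F_{\a_{j}}(\z)$, so it suffices to verify $F_{\a_{j}}(\xib([\m]))=1$. Applying (d) with $\b=\a_{j}$ and using the fact that $\a_{j}$ is the $j$-th row of $\AAA$, so $\a_{j}\cdot\AAA^{-1}=\e_{j}$, this quantity equals $\exp[2\pi i\langle\m,\e_{j}\rangle]=\exp[2\pi i\,m_{j}]=1$. For (a)(ii), since every coordinate $z_k\neq 0$, the hypothesis forces $\xi_k([\m_{1}-\m_{2}])=1$ for each $k$, i.e.\ each entry of $(\m_{1}-\m_{2})\cdot(\AAA^{-1})^t$ lies in $\Z$. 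Writing $(\m_{1}-\m_{2})\cdot(\AAA^{-1})^t=\p\in\Z^n$ and multiplying on the right by $\AAA^t$, I get $\m_{1}-\m_{2}=\p\cdot\AAA^t\in\mathfrak C(\AAA)$ by Definition \ref{Def1.1}(a), so $[\m_{1}]=[\m_{2}]$ in $\G_{\AAA}$. Faithfulness of the action is then immediate from (a)(i) together with (a)(ii).

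Part (c) is the only item requiring genuine external input, namely the orbit description in Lemma \ref{Lem2.3}(\ref{Lem2.3c}). Given $\z_{1},\z_{2}\in\C^n_*$ with $\Phi_{\AAA}(\z_{1})=\Phi_{\AAA}(\z_{2})=\w$, I would fix a pre-image $\z_{0}\in\C^n_*$ of $\w$ as in Lemma \ref{Lem2.3}(\ref{Lem2.3a}). Lemma \ref{Lem2.3}(\ref{Lem2.3c}) then identifies the fiber $\Phi_{\AAA}^{-1}(\{\w\})$ with the orbit $\{\xib([\m])\otimes\z_{0}:[\m]\in\G_{\AAA}\}$, so there exist $[\m_{1}],[\m_{2}]\in\G_{\AAA}$ with $\z_{j}=\xib([\m_{j}])\otimes\z_{0}$. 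Applying (a)(i) with $[\m]=[\m_{1}]-[\m_{2}]$ then yields $\xib([\m])\otimes\z_{2}=\xib([\m_{1}])\otimes\z_{0}=\z_{1}$, as required. The only even mildly delicate moment across the whole lemma is the translation in (a)(ii) between the condition $\xib([\m])=\1$ on $\C^n_*$ and membership of $\m$ in $\mathfrak C(\AAA)$, which depends on carefully respecting the transpose convention built into Definition \ref{Def1.1}(a); every other step is coordinatewise bookkeeping.
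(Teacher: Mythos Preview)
Your proof is correct and largely parallels the paper's argument, with two small route differences worth noting. For part (b), you give a self-contained computation via (d), observing that $\a_j\cdot\AAA^{-1}=\e_j$ so that $F_{\a_j}(\xib([\m]))=\exp[2\pi i\,m_j]=1$; the paper instead simply cites Lemma~\ref{Lem2.3}(\ref{Lem2.3c}). Your approach here is arguably cleaner since it avoids the polar-coordinate machinery entirely. Conversely, for part (c) you pass through an auxiliary base point $\z_0$ and invoke Lemma~\ref{Lem2.3}(\ref{Lem2.3c}) to write $\z_j=\xib([\m_j])\otimes\z_0$, then subtract; the paper works more directly from Lemma~\ref{Lem2.3}(\ref{Lem2.3b}), reading off from $\r_1=\r_2$ and $(\thetab_1-\thetab_2)\cdot\AAA^t\in\Z^n$ that $\z_1=\xib([\m])\otimes\z_2$ without introducing $\z_0$. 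Both are fine; the paper's route for (c) is a line shorter. Parts (a)(i), (a)(ii), and (d) match the paper essentially verbatim.
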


\begin{proof} 
The property in (i) of part (\ref{Lem3.7a}) follows from the fact that each $\xi_{k}$ is a character of $\G_{\AAA}$. If $\xib[\m_{1}]\otimes\z=\xib[\m_{2}]\otimes\z$ for some $\z\in\C^{n}_{*}$ then $\xib([\m_{1}])=\xib([\m_{2}])$ since all the components of $\z$ are non-zero. Hence  for $1 \leq k \leq n$, 
\begin{align*}  
&\exp\big[2\pi i \big\langle\m_{1}, \e_{k}\cdot\AAA^{-1}\big\rangle\big] =\exp\big[2\pi i \big\langle\m_{2}, \e_{k}\cdot\AAA^{-1}\big\rangle\big], \text{ or } \\ &\exp \bigl[ 2 \pi i \big\langle \e_k, (\m_1 - \m_2) (\mathbf A^t)^{-1} \big \rangle \bigr] = 1, 
\end{align*}  
so $(\m_{1}-\m_{2})\cdot (\AAA^{t})^{-1}\in \Z^{n}$. But this says that $\m_{1}-\m_{2}\in \mathfrak C(\AAA)$ and so $[\m_{1}]=[\m_{2}]$, completing the proof of (\ref{Lem3.7a}). 
\vskip0.1in
\noindent Part \eqref{Lem3.7b} follows from Lemma \ref{Lem2.3} \eqref{Lem2.3c}, with $\z_0 = \z$ and  $\mathbf w = \Phi_{\mathbf A}(\mathbf z)$ in the notation of that lemma.   
\vskip0.1in
\noindent Next, let $\z_{\ell}=\r_{\ell} \otimes \exp[2\pi i \thetab_{\ell}]\in \C^{n}_{*}$ for $\ell =1,\,2$. From Lemma \ref{Lem2.3} \eqref{Lem2.3b} we know that \[ \Phi_{\AAA}(\z_{1})=\Phi_{\AAA}(\z_{2})  \text{ if and only if }  \r_{1}=\r_{2}  \text{ and } \thetab_{2}=\thetab_{1}+\m\cdot(\AAA^{t})^{-1} \] for some $\m\in \Z^{n}$. But this is true if and only if  $\z_{2}=\xib([\m])\otimes \z_{1}$, establishing (\ref{Lem3.7c}).  
\vskip0.1in
\noindent Finally, for part \eqref{Lem3.7d} we have 
\begin{align*}  F_{\b}\big(\xib([\m])\big) &=\prod_{j=1}^{n} \exp \bigl[ {2\pi i b_{j}\langle\m,\e_{j}\cdot\AAA^{-1}\rangle} \bigr] \\ &=  \exp \Bigl[ {2\pi i \big\langle\m,\sum_{j=1}^{n} b_j \e_{j}\cdot\AAA^{-1} \big \rangle} \Bigr]= \exp \bigl[ {2\pi i \big \langle \m,\b\cdot\AAA^{-1} \big \rangle} \bigr]. \qedhere 
\end{align*} 
\end{proof}

\section{Group actions and characters}\label{Sec4}
\noindent In this section we recall some basic facts about group characters and group actions for an arbitrary finite abelian group $G$.   Let $\#(G)$ denote the cardinality of $G$ and let $e$ denote the identity element. The following orthogonality relations are well-known; see for example \cite[Corollary 4.2]{Conrad} for a proof. 

\begin{proposition} \label{Prop4.1}
Let $G$ be a finite abelian group. Suppose that $\chi_{1}, \chi_{2}\in \widehat G$ and $g\in G$. Then
\beas
\sum_{g\in G}\chi_{1}(g)\,\overline{\chi_{2}(g)} &= 
\begin{cases}
\#(G) &\text{if $\chi_{1}=\chi_{2}$},\\
0 &\text{if $\chi_{1}\neq\chi_{2}$}
\end{cases} &&\text{and}&
\sum_{\chi\in\widehat G}\chi(g) &= 
\begin{cases}
\#(G) &\text{if $g=e$ },\\
0 &\text{if $g\neq e$}.
\end{cases}
\eeas
\end{proposition}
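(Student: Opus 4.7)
The plan is to establish both orthogonality relations by the same translation trick that forces character sums to either equal $\#(G)$ or vanish. For the first identity, I would first note that $\chi_1\overline{\chi_2} = \chi_1\chi_2^{-1}$ is itself a character of $G$, equal to the trivial character exactly when $\chi_1 = \chi_2$. Thus it suffices to prove the statement: for any character $\chi\in\widehat G$,
\begin{equation*}
\sum_{g\in G}\chi(g)= \begin{cases}\#(G) & \text{if }\chi\text{ is trivial,}\\ 0 & \text{otherwise.}\end{cases}
\end{equation*}
The trivial case is immediate since each summand is $1$. For a nontrivial $\chi$, I would pick $h\in G$ with $\chi(h)\ne 1$ and exploit that the map $g\mapsto hg$ is a bijection of $G$: this gives $\sum_g\chi(g) = \sum_g \chi(hg) = \chi(h)\sum_g\chi(g)$, so $(\chi(h)-1)\sum_g\chi(g)=0$ and the sum vanishes.

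For the second identity the structure of the argument is identical, but with the roles of $G$ and $\widehat G$ exchanged. If $g=e$ then every character satisfies $\chi(e)=1$, so the sum equals $\#(\widehat G)$; combined with the fact that $\#(\widehat G)=\#(G)$ for finite abelian groups, this gives the top case. If $g\ne e$, I would produce a character $\chi_0\in\widehat G$ with $\chi_0(g)\ne 1$ and run the same translation trick inside $\widehat G$: the bijection $\chi\mapsto \chi_0\chi$ of $\widehat G$ yields
\begin{equation*}
\sum_{\chi\in\widehat G}\chi(g)=\sum_{\chi\in\widehat G}(\chi_0\chi)(g)=\chi_0(g)\sum_{\chi\in\widehat G}\chi(g),
\end{equation*}
forcing the sum to be zero.

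The main obstacle is the existence of a separating character $\chi_0$ when $g\ne e$, together with the equality $\#(\widehat G)=\#(G)$. I would handle both simultaneously via the structure theorem for finite abelian groups, writing $G\cong\bigoplus_{i=1}^k \mathbb Z/n_i\mathbb Z$. Any character of $G$ then factors as a product of characters on the cyclic factors, and each character of $\mathbb Z/n_i\mathbb Z$ has the form $m\mapsto \exp(2\pi i j m/n_i)$ for a unique $j\in\{0,1,\ldots,n_i-1\}$. This description simultaneously gives $\#(\widehat G)=\prod_i n_i = \#(G)$ and, if $g$ has a nontrivial component $m_i\ne 0$ in the $i$-th cyclic factor, furnishes the required $\chi_0$ by taking $j$ with $jm_i\not\equiv 0\pmod{n_i}$ (e.g. $j=1$ if $m_i\not\equiv 0$, otherwise adjust) and trivial characters on the other summands. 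With this separating property in hand, the translation arguments above complete the proof of both relations.
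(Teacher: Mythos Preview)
Your argument is correct and is essentially the standard proof of these orthogonality relations. Note, however, that the paper does not actually prove Proposition~4.1: it simply states the result as well-known and refers the reader to \cite[Corollary 4.2]{Conrad}. So there is nothing in the paper to compare your argument against; you have supplied a proof where the paper supplies only a citation. One minor point: in your construction of the separating character, once you have chosen a coordinate $i$ with $m_i\not\equiv 0\pmod{n_i}$, the choice $j=1$ already gives $\chi_0(g)=\exp(2\pi i m_i/n_i)\ne 1$, so the parenthetical ``otherwise adjust'' is unnecessary.
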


\noindent Now suppose that $G$  has an action $\rho$ on a set $X$; i.e. $\rho$ is a group homomorphism from $G$ to the group of permutations of $X$. If $g\in G$ the action of $\rho(g)$ on $x\in X$ is denoted by $\rho(g)\cdot x$. The action $\rho$ is \emph{faithful}  if $\rho(g_{1})=\rho(g_{2})$ implies $g_{1}=g_{2}$.  If $F:X\to\C$ and  $\chi\in \widehat G$ set
\bea\label{Eqn3.3q}
\Pi_{\chi}[F](x) :=F_{\chi}(x) = \frac{1}{\#(G)} \sum_{g\in G}\chi(g)\,F(\rho(g)\cdot x).
\eea
Note that in the context of this paper, the above definition of $\Pi_{\chi}$ is the same as the one specified in \eqref{Eqn1.12}.  A positive measure $\mu$ on $X$ is said to be {\em{invariant under the action $\rho$ of $G$}} if for all $g\in G$ and all $f\in L^{1}(X,d\mu)$ the following relation holds: \begin{equation}  \int_{X}f(x)\,d\mu(x)=\int_{X}f(\rho(g)\cdot x)\,d\mu(x). \label{invariant-measure} 
\end{equation}

\begin{proposition}\label{Prop4.2} 
Suppose that $G$ is a finite abelian group, equipped with an action $\rho$ on a set $X$, which in turn supports a positive measure $\mu$ that is invariant under $\rho$. Let $F:X\to \C$.

\begin{enumerate}[(a)]
\item \label{Prop4.2a}
$F(x)=\sum_{\chi\in\widehat G}F_{\chi}(x)$.

\smallskip

\item \label{Prop4.2b}
If $h\in G$ then $F_{\chi}(\rho(h)\cdot x)= \overline{\chi(h)}\,F_{\chi}(x)=\chi(h)^{-1}F_{\chi}(x)$.

\smallskip

\item \label{Prop4.2c}
If $G:X\to \C$ and $G(\rho(h)\cdot x)=\chi(h)^{-1}G(x)$ for all $h\in G$ then $\Pi_{\chi}[G]=G$.

\smallskip

\item \label{Prop4.2d}
If $F\in L^{2}(X,d\mu)$ and $\chi_{1},\chi_{2}\in \widehat G$ then 
\beas
\int_{X}F_{\chi_{1}}(x)\overline{F_{\chi_{2}}(x)}\,d\mu(x) =
\begin{cases}
||F_{\chi_{1}}||^{2}_{\mathcal L^{2}(X,d\mu)} &\text{if $\chi_{1}=\chi_{2}$,}\\
0 &\text{if $\chi_{1}\neq \chi_{2}$.}
\end{cases}
\eeas

\item \label{Prop4.2e}
If $\chi_{1}, \chi_{2}\in \widehat G$ then \,\, $\Pi_{\chi_{1}}\big[\Pi_{\chi_{2}}[F]\big](x) = 
\begin{cases}
\Pi_{\chi_{1}}[F](x) &\text{if $\chi_{1}=\chi_{2}$,}\\
0&\text{if $\chi_{1}\neq\chi_{2}$.}
\end{cases}$
\end{enumerate}
\end{proposition}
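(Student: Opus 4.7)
The plan is to derive all five parts from the two orthogonality relations of Proposition \ref{Prop4.1}, together with the invariance of the measure $\mu$ under the action $\rho$. The arguments are essentially formal: none require the special structure of $\G_{\AAA}$, only that $G$ is a finite abelian group acting on $(X,\mu)$.

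For part (\ref{Prop4.2a}) I would interchange the order of summation in the definition \eqref{Eqn3.3q}:
\[
\sum_{\chi \in \widehat{G}} F_{\chi}(x) = \frac{1}{\#(G)} \sum_{g \in G} F(\rho(g)\cdot x) \Bigl( \sum_{\chi \in \widehat{G}} \chi(g) \Bigr),
\]
and apply the second orthogonality relation so that only the term $g = e$ survives. For part (\ref{Prop4.2b}) I would apply the change of variable $g \mapsto gh^{-1}$ in the sum $F_{\chi}(\rho(h)\cdot x) = \#(G)^{-1} \sum_g \chi(g) F(\rho(gh)\cdot x)$; since $\chi$ is a homomorphism, $\chi(g h^{-1}) = \chi(g)\chi(h)^{-1}$, and the factor $\chi(h)^{-1} = \overline{\chi(h)}$ (because $|\chi(h)| = 1$) pulls out. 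Part (\ref{Prop4.2c}) is then a one-line verification: substituting the hypothesized functional equation into \eqref{Eqn3.3q} makes every term equal to $G(x)$.

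For part (\ref{Prop4.2d}) I would combine part (\ref{Prop4.2b}) with the invariance \eqref{invariant-measure} of $\mu$. Writing $I = \int_X F_{\chi_1}(x)\overline{F_{\chi_2}(x)}\,d\mu(x)$ and replacing $x$ by $\rho(h)\cdot x$ under the integral gives
\[
I = \int_X F_{\chi_1}(\rho(h)\cdot x)\overline{F_{\chi_2}(\rho(h)\cdot x)}\,d\mu(x) = \overline{\chi_1(h)}\chi_2(h)\, I
\]
for every $h \in G$. If $\chi_1 \ne \chi_2$, choosing $h$ with $\chi_1(h)\ne\chi_2(h)$ forces $I = 0$; the case $\chi_1=\chi_2$ is tautological (and requires only that $F_{\chi_1} \in \mathcal L^2$, which follows because $F \in \mathcal L^2$, each translate $F \circ \rho(g)$ has the same norm by $\mu$-invariance, and $F_{\chi_1}$ is a finite linear combination of such translates). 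Finally, part (\ref{Prop4.2e}) follows from the same strategy: by (\ref{Prop4.2b}), $\Pi_{\chi_2}[F](\rho(h)\cdot x) = \overline{\chi_2(h)}\Pi_{\chi_2}[F](x)$, so
\[
\Pi_{\chi_1}\bigl[\Pi_{\chi_2}[F]\bigr](x) = \Pi_{\chi_2}[F](x) \cdot \frac{1}{\#(G)} \sum_{h \in G} \chi_1(h)\overline{\chi_2(h)},
\]
and the first orthogonality relation of Proposition \ref{Prop4.1} finishes the job. I do not anticipate any serious obstacle; the main care is simply to track whether one should substitute $g\mapsto gh$ or $g\mapsto gh^{-1}$ so that the character arithmetic produces $\chi(h)^{-1}$ rather than $\chi(h)$.
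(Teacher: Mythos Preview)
Your proposal is correct. Parts (\ref{Prop4.2a}), (\ref{Prop4.2b}), and (\ref{Prop4.2c}) match the paper's argument essentially line for line. For parts (\ref{Prop4.2d}) and (\ref{Prop4.2e}), however, you take a genuinely different and somewhat slicker route. The paper expands both $F_{\chi_1}$ and $F_{\chi_2}$ (respectively, both applications of $\Pi$) into their defining sums, obtaining a double sum over $g,h\in G$, then performs a change of variable in the integral (respectively, a reindexing of the sum) to isolate the inner sum $\sum_h \chi_1(h)\overline{\chi_2(h)}$ and invoke Proposition \ref{Prop4.1}. You instead reuse part (\ref{Prop4.2b}) directly: for (\ref{Prop4.2d}) you substitute $x\mapsto\rho(h)\cdot x$ under the integral and read off the scalar relation $I=\overline{\chi_1(h)}\chi_2(h)\,I$, and for (\ref{Prop4.2e}) you feed the transformation law of $\Pi_{\chi_2}[F]$ into the definition of $\Pi_{\chi_1}$ so the orthogonality sum appears immediately. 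Your approach has the virtue of avoiding the double-sum bookkeeping and making the role of (\ref{Prop4.2b}) as the key intermediate step explicit; the paper's approach is more self-contained in that it does not rely on having proved (\ref{Prop4.2b}) first.
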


\begin{proof}
We have $\sum_{\chi\in\widehat G}F_{\chi}(x)= \frac{1}{\#(G)}\sum_{g\in G} \Big[\sum_{\chi\in\widehat G}\chi(g)\Big]\,F(\rho(g)\cdot x)= F(x)$  by Proposition \ref{Prop4.1}, which gives (\ref{Prop4.2a}). Next, using a reparametrization $g \in G \mapsto gh$ in the sum, we arrive at the relation 
\beas
F_{\chi}(\rho(h)\cdot x)&=\#(G)^{-1}\sum_{g\in G}\chi(g)\,F\big(\rho(g)\cdot\rho(h)\cdot x)
=
\#(G)^{-1}\sum_{g\in G}\chi(g)\,F\big(\rho(gh)\cdot x)\\
&=
\#(G)^{-1}\sum_{g\in G}\chi(gh^{-1})\,F\big(\rho(g)\cdot x)
=
\overline{\chi(h)}\,\#(G)^{-1}\sum_{g\in G}\chi(g)\,F\big(\rho(g)\cdot x),
\eeas
which gives (\ref{Prop4.2b}).  A similar argument gives (\ref{Prop4.2c}). If $\chi_{1}\neq \chi_{2}$ then the defining property \eqref{invariant-measure} of an invariant measure yields  
\beas
\int_{X}F_{\chi_{1}}(x)\overline{F_{\chi_{2}}(x)}\,d\mu(x)
&=
\#(G)^{-2}\int_{X}\sum_{g,h\in G}\chi_{1}(g) \overline{\chi_{2}(h)}F(\rho(g)\cdot x)\overline{F(\rho(h)\cdot x)}\,d\mu(x)\\
&=
\#(G)^{-2}\int_{X}\sum_{g,h\in G}\chi_{1}(g) \overline{\chi_{2}(h)}F(\rho(g h^{-1})\cdot x)\overline{F( x)}\,d\mu(x)\\
&=
\#(G)^{-2}\int_{X}\sum_{g,h\in G}\chi_{1}(gh) \overline{\chi_{2}(h)}F(\rho(g )\cdot x)\overline{F(x)}\,d\mu(x)
\\
&=
\#(G)^{-2}\int_{X}\sum_{g\in G}\chi_{1}(g)\Big[\sum_{h\in G}\chi_{1}(h) \overline{\chi_{2}(h)}\Big]F(\rho(g )\cdot x)\overline{F( x)}\,d\mu(x)=0
\eeas
by Proposition \ref{Prop4.1}. Finally 

\beas
\Pi_{\chi_{1}}\big[\Pi_{\chi_{2}}[F]\big](x) &= \Pi_{\chi_{1}}\Big[\#(G)^{-1}\sum_{g\in G}\chi_{2}(g) F(\rho(g)\cdot x)\Big]\\
&=\#(G)^{-2}\sum_{g\in G}\chi_{2}(g)\sum_{h\in G}\chi_{1}(h)\,F(\rho(g)\cdot\rho(h)\cdot x)\\
&=
\#(G)^{-2}\sum_{g,h\in G}\chi_{1}(h)\chi_{2}(g)\,F(\rho(gh)\cdot x)\\
&=
\#(G)^{-2}\sum_{g,h\in G}\chi_{1}(h)\chi_{2}(gh^{-1})\,F(\rho(g)\cdot x)\\
&=
\#(G)^{-2}\sum_{g\in G}\chi_{2}(g)\Big[\sum_{h\in G}\chi_{1}(h)\overline{\chi_{2}(h)}\Big]\,F(\rho(g)\cdot x),
\eeas
and (\ref{Prop4.2d}) then follows from Proposition \ref{Prop4.1}.
\end{proof}

\begin{corollary}\label{Cor4.3}
If $F\in \mathcal L^{2}(X,d\mu)$, then the norm of $F$ decomposes as follows: \[ ||F||^{2}_{\mathcal L^{2}(X,d\mu)}=\sum_{\chi \in \widehat G} ||F_{\chi}||^{2}_{\mathcal L^{2}(X,d\mu)}. \] 
\end{corollary}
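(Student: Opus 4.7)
The plan is to derive this corollary as an immediate consequence of the decomposition in Proposition \ref{Prop4.2} \eqref{Prop4.2a} together with the orthogonality in Proposition \ref{Prop4.2} \eqref{Prop4.2d}. First I would expand $F$ as the finite sum $F = \sum_{\chi \in \widehat{G}} F_{\chi}$, which is justified pointwise on $X$ by part \eqref{Prop4.2a}. Since $\widehat{G}$ is finite (its cardinality equals $\#(G)$), each $F_{\chi}$ lies in $\mathcal{L}^{2}(X, d\mu)$: indeed, $F_{\chi}$ is a finite linear combination of the functions $x \mapsto F(\rho(g)\cdot x)$ for $g \in G$, and each of these has the same $\mathcal{L}^{2}$-norm as $F$ by the invariance of $\mu$ under $\rho$ (equation \eqref{invariant-measure}).

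Next, I would compute
\begin{equation*}
\|F\|^{2}_{\mathcal{L}^{2}(X, d\mu)} = \int_{X} F(x) \overline{F(x)} \, d\mu(x) = \int_{X} \Bigl( \sum_{\chi_{1} \in \widehat{G}} F_{\chi_{1}}(x) \Bigr) \overline{\Bigl( \sum_{\chi_{2} \in \widehat{G}} F_{\chi_{2}}(x) \Bigr)} \, d\mu(x).
\end{equation*}
Because both sums are finite, I can interchange summation with integration to obtain
\begin{equation*}
\|F\|^{2}_{\mathcal{L}^{2}(X, d\mu)} = \sum_{\chi_{1}, \chi_{2} \in \widehat{G}} \int_{X} F_{\chi_{1}}(x) \overline{F_{\chi_{2}}(x)} \, d\mu(x).
\end{equation*}

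Finally, I would apply Proposition \ref{Prop4.2} \eqref{Prop4.2d} to each term of this double sum: the cross terms with $\chi_{1} \neq \chi_{2}$ vanish, and the diagonal terms $\chi_{1} = \chi_{2} = \chi$ contribute $\|F_{\chi}\|^{2}_{\mathcal{L}^{2}(X, d\mu)}$, yielding the claimed identity $\|F\|^{2}_{\mathcal{L}^{2}(X, d\mu)} = \sum_{\chi \in \widehat{G}} \|F_{\chi}\|^{2}_{\mathcal{L}^{2}(X, d\mu)}$.

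There is no real obstacle here: the only thing to verify carefully is membership of each $F_{\chi}$ in $\mathcal{L}^{2}(X, d\mu)$, which follows from the invariance of the measure, and the interchange of integration and summation, which is automatic because $\widehat{G}$ is finite. In spirit this is simply the Pythagorean theorem for the finite orthogonal decomposition $F = \sum_{\chi} F_{\chi}$ established in Proposition \ref{Prop4.2}.
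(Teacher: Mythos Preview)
Your proof is correct and follows exactly the approach implicit in the paper, which states Corollary \ref{Cor4.3} without proof as an immediate consequence of Proposition \ref{Prop4.2} parts \eqref{Prop4.2a} and \eqref{Prop4.2d}. You have simply spelled out the Pythagorean argument that the paper leaves to the reader.
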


\medskip

\noindent Now let $\mathcal S(X)$ be a vector space of functions on $X$ that is invariant under the action $\rho$ of $G$, i.e., if $F \in \mathcal S(X)$, then for every $g \in G$, the function given by $x \in X \mapsto F(\rho(g) \cdot)$ is also in $\mathcal S(X)$. In this case, it follows from the definition \eqref{Eqn3.3q} of $\Pi_{\chi}$ and Proposition \ref{Prop4.1} that the linear operator $\Pi_{\chi}$ maps $\mathcal S(X)$ into $\mathcal S(X)$. For each $\chi\in \widehat G$ set
\begin{equation} \label{def-Schi} 
\mathcal S(X)_{\chi} := \Pi_{\chi} \bigl[ \mathcal S(X) \bigr] = \big\{F\in \mathcal S(X) : F=\Pi_{\chi}[F]\big\}\subset \mathcal S(X).
\end{equation} 
The following is then an easy consequence of Proposition \ref{Prop4.2}.

\begin{corollary}\label{Cor4.4}
For each $\chi\in \widehat G$, the mapping $\Pi_{\chi}: \mathcal S(X)\to \mathcal S(X)$ is a linear map. Moreover,
\begin{enumerate}[(a)]
\medskip
\item $\mathcal S(X)_{\chi_{1}}\cap \mathcal S(X)_{\chi_{2}}=\{0\}$ if $\chi_{1}\neq \chi_{2}$.
\medskip
\item $\mathcal S(X) = \bigoplus_{\chi\in \widehat G} \mathcal S(X)_{\chi}$. \label{Cor4.4b} 
\medskip
\item If $\mathcal S(X)\subset \mathcal L^{2}(X,d\mu)$ then the subspaces $\{\mathcal S(X)_{\chi} : \chi \in \widehat{G} \}$ are mutually orthogonal.
\end{enumerate}
\medskip
In particular, if $\mathcal S(X)\subset \mathcal L^{2}(X,d\mu)$, then $\Pi_{\chi}$ acting on $\mathcal S(X)$ is an orthogonal projection, in which case the direct sum in \eqref{Cor4.4b} gives an orthogonal decomposition of $\mathcal S(X)$ into mutually orthogonal subspaces. 
\end{corollary}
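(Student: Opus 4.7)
The proof is essentially a packaging of Proposition \ref{Prop4.2}, so the plan is to reduce each item of the corollary to one or two lines of bookkeeping against that proposition.

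First, I would observe that $\Pi_{\chi}$ is linear directly from its definition \eqref{Eqn3.3q}, which expresses it as a finite $\C$-linear combination of the translation operators $F \mapsto F(\rho(g)\cdot)$. Since $\mathcal S(X)$ is assumed invariant under these translations, each $\Pi_{\chi}[F]$ lies in $\mathcal S(X)$, so $\Pi_{\chi}:\mathcal S(X)\to\mathcal S(X)$ is well defined.

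For part (a), I would argue: suppose $F\in \mathcal S(X)_{\chi_{1}}\cap \mathcal S(X)_{\chi_{2}}$ with $\chi_{1}\neq\chi_{2}$. By definition \eqref{def-Schi} this means $F=\Pi_{\chi_{1}}[F]=\Pi_{\chi_{2}}[F]$. Applying $\Pi_{\chi_{1}}$ to the rightmost equation and using Proposition \ref{Prop4.2}\eqref{Prop4.2e} yields $F=\Pi_{\chi_{1}}[\Pi_{\chi_{2}}[F]]=0$. For part (b), Proposition \ref{Prop4.2}\eqref{Prop4.2a} already gives the decomposition $F=\sum_{\chi}\Pi_{\chi}[F]$, and Proposition \ref{Prop4.2}\eqref{Prop4.2e} with $\chi_{1}=\chi_{2}=\chi$ gives $\Pi_{\chi}\circ\Pi_{\chi}=\Pi_{\chi}$, so each summand lies in $\mathcal S(X)_{\chi}$. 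Directness of the sum then follows from the same idempotency/orthogonality identity: if $\sum_{\chi}F_{\chi}=0$ with $F_{\chi}\in\mathcal S(X)_{\chi}$, applying $\Pi_{\chi_{0}}$ to both sides collapses the sum to $F_{\chi_{0}}=0$ by Proposition \ref{Prop4.2}\eqref{Prop4.2e}.

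Part (c) is the only step requiring a bit more than invoking a single line of Proposition \ref{Prop4.2}, because that proposition's orthogonality statement \eqref{Prop4.2d} is phrased for two pieces $F_{\chi_{1}},F_{\chi_{2}}$ of a \emph{single} $F$, whereas here we have two arbitrary elements $F_{j}\in\mathcal S(X)_{\chi_{j}}$. The cleanest fix, and what I would write out, is to use the covariance identity Proposition \ref{Prop4.2}\eqref{Prop4.2b}: since $F_{j}=\Pi_{\chi_{j}}[F_{j}]$, that identity gives $F_{j}(\rho(h)\cdot x)=\chi_{j}(h)^{-1}F_{j}(x)$ for every $h\in G$. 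Invariance of $\mu$ under $\rho(h)$ then yields
\begin{equation*}
\int_{X}F_{1}\,\overline{F_{2}}\,d\mu=\int_{X}F_{1}(\rho(h)\cdot x)\overline{F_{2}(\rho(h)\cdot x)}\,d\mu(x)=\chi_{1}(h)^{-1}\chi_{2}(h)\int_{X}F_{1}\,\overline{F_{2}}\,d\mu,
\end{equation*}
using $\overline{\chi_{2}(h)^{-1}}=\chi_{2}(h)$ since $\chi_{2}(h)\in\T$. Because $\chi_{1}\neq\chi_{2}$ there exists $h$ with $\chi_{1}(h)^{-1}\chi_{2}(h)\neq 1$, forcing the integral to vanish. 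Alternatively one could apply Proposition \ref{Prop4.2}\eqref{Prop4.2d} to $F:=F_{1}+F_{2}$ after checking via \eqref{Prop4.2e} that $F_{\chi_{j}}=F_{j}$; I would present whichever is shorter.

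Finally, combining (a), (b), and (c), when $\mathcal S(X)\subset\mathcal L^{2}(X,d\mu)$ the $\mathcal S(X)_{\chi}$ are pairwise orthogonal closed-under-$\Pi_{\chi}$ subspaces whose sum is all of $\mathcal S(X)$, and $\Pi_{\chi}$ is self-adjoint on $\mathcal S(X)$ because the orthogonal decomposition diagonalizes it with eigenvalues $0$ and $1$; so $\Pi_{\chi}$ is the orthogonal projection onto $\mathcal S(X)_{\chi}$. There is no real obstacle here — the only subtlety worth flagging in the write-up is the mismatch in part (c) between Proposition \ref{Prop4.2}\eqref{Prop4.2d} and what is actually needed, which the covariance argument above resolves cleanly.
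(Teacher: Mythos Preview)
Your proposal is correct and follows exactly the route the paper indicates: the paper simply records Corollary \ref{Cor4.4} as ``an easy consequence of Proposition \ref{Prop4.2}'' without giving further details, and your write-up supplies precisely those details. Your observation about part (c) --- that Proposition \ref{Prop4.2}\eqref{Prop4.2d} is literally stated for two components $F_{\chi_1}, F_{\chi_2}$ of a single $F$ rather than for arbitrary $F_j\in\mathcal S(X)_{\chi_j}$ --- is a fair point, and both of your fixes (the direct covariance computation via \eqref{Prop4.2b}, or applying \eqref{Prop4.2d} to $F_1+F_2$ after checking $(F_1+F_2)_{\chi_j}=F_j$ via \eqref{Prop4.2e}) are clean and correct.
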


\section{Proofs of theorems \ref{Thm1.1}, \ref{Thm1.2}, and \ref{Thm-Bergman-unweighted}}\label{Sec5}

\subsection{Proof of Theorem \ref{Thm1.1}} \quad 
\medskip

\noindent According to Lemma \ref{Lem3.7}  the mapping $([\m],\z)\to \xib([\m ])\otimes\z$ from equation \eqref{Eqn1.11} defines a group action $\rho$ of $\G_{\AAA}$ on $\C^{n}_{*}$. Set $X = \Omega_1 \cap \mathbb C_{\ast}^n = \Omega_1 \setminus \mathbb H$. By assumption, $\Omega_{1}$ is invariant under this action; hence it follows from the definition of domain invariance on page \pageref{domain-invariance}  that $([\m],\z)\to \xib([\m ])\otimes\z$ generates a group action of $\mathbb G_{\mathbf A}$ on $X$ as well. Further, if $\omega_1$ is a continuous non-negative weight function that is invariant under $\rho$ according to the definition on page \pageref{function-invariance}, then the measure $d\mu(\mathbf z) = \omega_1(\mathbf z) dV(\mathbf z)$ is also an invariant measure on $X$, in the sense of \eqref{invariant-measure}. 
\vskip0.1in
\noindent We now choose two $\rho$-invariant vector spaces of functions. The first is $\mathcal S(X) = \mathcal L^2(\Omega_1 \setminus \mathbb H, d\mu)$, which is isomorphic to $\mathcal L^2(\Omega_1, \omega_1)$, since $\mathbb H$ has Lebesgue measure zero. The second choice of $\mathcal S(X)$ is the subspace of $\mathcal L^2(\Omega_1, \omega_1)$ consisting of holomorphic functions on $\Omega_1 \setminus \mathbb H$ that admit a holomorphic extension to $\Omega_1 \cap \mathbb H$. Note that the latter space is isomorphic to the weighted Bergman space $\mathcal A^2(\Omega_1, \omega_1)$. The results of Section \ref{Sec4} apply for these choices of $\mathcal S(X)$. Parts  (\ref{Thm2.1d}) and  (\ref{Thm2.1e}) of Theorem \ref{Thm1.1} then follow immediately from Corollary \ref{Cor4.4}.

\medskip

\subsection {Proof of Theorem \ref{Thm1.2}} \quad

\medskip

\noindent 
\begin{proof}[{\em{\textbf {Part (\ref{Thm2.2a}):}}}] 
The following two identities follow respectively from part (\ref{Lem3.7d}) of Lemma \ref{Lem3.7} and part (\ref{Prop4.2b}) of Proposition \ref{Prop4.2}: for every $[\m] \in \mathbb G_{\mathbf A}$, 
\[ F_{\b}(\xib([\m])\otimes\z) =\chi([\m])\,F_{\b}(\z) \quad \text{ and } \quad \Pi_{\chi}[f](\xib([\m])\otimes\z)=\chi([\m])^{-1}\Pi_{\chi}[f](\z).\]  
Combining these two observations we find that 
\beas
F_{\b}(\xib([\m])\otimes\z)\Pi_{\chi}[f](\xib([\m])\otimes\z)=F_{\b}(\z)\,\Pi_{\chi}[f](\z),
\eeas 
so $\Pi_{\chi}[g]\,F_{\b}$ is invariant under the action of $\G_{\AAA}$. Thus, the function $f_{\b} = T_{\mathbf b}[f]$ defined by \begin{equation}  f_{\b}(\w)=F_{\b}(\z)\Pi_{\chi}[f](\z) \text{ for any } \z\in \Phi_{\AAA}^{-1}(\w), \; \mathbf w \in \Omega_2^{\ast} \label{def-Tb} \end{equation}  is well-defined as a function on $\Omega_2^{\ast}$.  
\end{proof}

\noindent \begin{proof}[{\em\textbf{Part (\ref{Thm2.2c}):}}] If $f$ is holomorphic on $\Omega_1^{\ast}$, then so is $\Pi_{\chi}[f]$. Any function $F_{\b}$ is holomorphic on $\mathbb C_{\ast}^n$, and hence on $\Omega_1^{\ast}$. The function $f_{\b}$ is a composition of their product $F_{\b}(\cdot) \Pi_{\chi}[f](\cdot)$ with $\Phi_{\mathbf A}^{-1}$. We have shown in Proposition \ref{Prop2.2}(\ref{Prop2.2c}) that $\Phi_{\AAA}: \mathbb C_{\ast}^n \rightarrow \mathbb C_{\ast}^n$ is locally a biholomorphic mapping.  Thus $f_{\b}$ is holomorphic on $\Omega_2^{\ast}$.  
\end{proof}

\noindent 
\begin{proof}[\em{\textbf{Part (\ref{Thm2.2b}):}}] Next, for any $g: \Omega_2^{\ast} \rightarrow \mathbb C$, let us set $f(\z)=g \big(\Phi_{\AAA}(\z)\big)\,F_{-\b}(\z)$. We know from Lemma \ref{Lem3.7}(\ref{Lem3.7c}) and (\ref{Lem3.7d}) that $\Phi_{\AAA}(\xib([\m])\otimes\z)=\Phi_{\AAA}(\z)$ and $F_{-\b}\big(\xib([\m]) \big)=\chi([\m])^{-1}$. Substituting these into the expression \eqref{Eqn1.12} for $\Pi_{\chi}[f]$, we arrive at 
\begin{align*}
\Pi_{\chi}[f](\z)&=  \frac{1}{\#(\mathbb G_{\mathbf A})} \sum_{[\m]\in \mathbb G_{\mathbf A}} \chi([\m]) f (\xib[\m] \otimes \z) \\
&= \frac{1}{\#(\G_{\AAA})}\sum_{[\m]\in \G_{\AAA}}\chi \bigl([\m])g \circ \Phi_{\AAA}(\xib([\m])\otimes\z \bigr) F_{-\b}\big(\xib([\m])\otimes\z\big)\\
%&=
%\#(\G_{\AAA})^{-1}\sum_{[\m]\in \G_{\AAA}}\chi([\m])g_{\b}\big(\Phi_{\AAA}(\z)\big)F_{-\b}\big(\xib([\m])\big)F_{-\b}(\z)\\
&=  \frac{1}{\#(\G_{\AAA})}\sum_{[\m]\in \G_{\AAA}}\chi \bigl([\m])g \big(\Phi_{\AAA}(\z)\big) \chi([\m])^{-1} F_{-\b}(\z) = f(\z).
\end{align*} 
The same relation also yields the second claim; namely, for every $\w = \Phi_{\mathbf A}(\z) \in \Omega_2$, \[T_{\b}[f](\w) = \Pi_{\chi}[f](\z) F_{\b}(\z) = f \circ \Phi_{\mathbf A}(\z) F_{\b}(\z) = g(\w). \qedhere\] 
\end{proof}

\noindent 
\begin{proof}[{\em{\textbf {Part (\ref{Thm2.2d}):}}}] For any integrable function $G:\Omega_{2}\to \C$,  the change of variables $\w=\Phi_{\AAA}(\z)$ gives 
\begin{equation}  \int_{\Omega_{2}}G(\w)\,dV(\w)=\det(\AAA)^{-1}\int_{\Omega_{1}}G\big(\Phi_{\AAA}(\z)\big)|J\Phi_{\AAA}(\z)|^{2}\,dV(\z), \label{change-of-variable}  \end{equation}   since  $\Phi_{\AAA}$ is $\det(\AAA)$-to-one. Now $\omega_{2}\big(\Phi_{\AAA}(\z)\big)=\omega_{1}(\z)$, and 
\begin{equation}  |J\Phi_{\AAA}(\z)|^{2}=\det(\AAA)^{2}|F_{\1\cdot\AAA-\1}(\z)|^{2}, \quad  |F_{\c}\big(\Phi_{\AAA}(\z)\big)|^{2}=|F_{\c\cdot\AAA}(\z)|^{2}. \label{cov-2} 
\end{equation}  
Given any $f \in \mathcal L^2(\Omega_1, \omega_1)$, there exists according to part (\ref{Thm2.2a}) a unique function $f_{\b} = T_{\b}[f]:\Omega_{2}\to \C$ so that
$g_{\b}\big(\Phi_{\AAA}(\z)\big)=\Pi_{\chi}[f](\z)\,F_{\b}(\z)$. Applying the change of variable formula \eqref{change-of-variable} with $ G(\w) = |T_{\b}[f](\w)|^2 \eta_{\b}(\w)$ and invoking the relations \eqref{cov-2}, we obtain 
\begin{align*}
\int_{\Omega_{2}}|T_{\b}[f](\w)|^{2} &\eta_{\b}(\mathbf w) \,dV(\w)
= \det(\mathbf A)^{-1} \int_{\Omega_{2}}|T_{\b}[f](\w)|^{2}|F_{\c}(\w)|^{2}\omega_{2}(\w)\,dV(\w)\\
& = 
\det(\AAA)^{-2}\int_{\Omega_{1}}|\Pi_{\chi}[f](\z)|^{2}|F_{\b}(\z)|^{2}|F_{\c}\big(\Phi_{\AAA}(\z)\big)|^{2}|J\Phi_{\AAA}(\z)|^{2}\omega_{2}(\Phi_{\AAA}(\z))\,dV(\z)\\
&=
\int_{\Omega_{1}}|\Pi_{\chi}[f](\z)|^{2}|F_{\c\cdot\AAA+\b}(\z)|^{2}|F_{\1\cdot\AAA-\1}(\z)|^{2}\omega_{1}(\z)\,dV(\z)\\
&=
\int_{\Omega_{1}}|\Pi_{\chi}[f](\z)|^{2}|F_{\c\cdot\AAA+\b+\1\cdot\AAA-\1}(\z)|^{2}\omega_{1}(\z)\,dV(\z)\\
&=
\int_{\Omega_{1}}|\Pi_{\chi}[f](\z)|^{2}\omega_{1}(\z)\,dV(\z).
\end{align*} 
 This completes the proof of \eqref{Thm2.2d}. 
 \end{proof}

\noindent 
\begin{proof}[{\em{\textbf{Part (\ref{Thm2.2e}):}}}] The definition \eqref{def-Tb} of $T_{\mathbf b}$ shows that it is linear.  Let us first show that $T_{\mathbf b}$ is injective, on $\mathcal L^2_{\chi}(\Omega_1, \omega_1)$, and hence on $\mathcal A^2_{\chi}(\Omega_1^{\ast}, \omega_1)$. If $T_{\b}[f] \equiv 0$ in $\mathcal L^2_{\chi}(\Omega_1, \omega_1)$, then the definition \eqref{def-Schi} dictates that $\Pi_{\chi}[f] = f$. It follows then from the norm identity \eqref{norm-identity} in part (\ref{Thm2.2d}) that 
\[ T_{\b}[f] \equiv 0 \text{ on } \mathcal L^2(\Omega_2, \eta_{\b}) \implies \Pi_{\chi}[f] = f \equiv  0 \text{ on } \mathcal L^2(\Omega_1, \omega_1), \text{ proving injectivity.}  \] 
Surjectivity of $T_{\b}$ on $\mathcal L^2(\Omega_2, \omega_2)$ and on $\mathcal A^2(\Omega_2^{\ast}, \eta_{\b})$ follows from parts \eqref{Thm2.2b} and \eqref{Thm2.2d}. Given $g \in \mathcal L^2(\Omega_2, \eta_{\b})$, the function $f$ defined in part \eqref{Thm2.2b} lies in $\mathcal L^2_{\chi}(\Omega_1, \omega_1)$ and obeys $T_{\mathbf b}[f] = g$. If $g \in \mathcal A^2(\Omega_2^{\ast}, \eta_{\b})$, the same function $f$ is also in $\mathcal A^2_{\chi}(\Omega_1^{\ast}, \omega_1)$, by virtue of part (\ref{Thm2.2c}). The identity \eqref{norm-identity} in part \eqref{Thm2.2d} shows that $T_{\b}$ preserves norms, and hence is an isometry. This proves the two isomorphisms claimed in \eqref{two-isomorphisms}.  
\end{proof}

\noindent 
\begin{proof}[{\em{\textbf{Part (\ref{Thm2.2f}):}}}] Since $\Omega_1^{\ast}$ and $\omega_1$ are both invariant under  the group action \eqref{group-action}, the space $\mathcal A^2(\Omega_1^{\ast}, \omega_1)$ admits the direct sum decomposition ensured by Theorem \ref{Thm1.1}. Combining this with part \eqref{Thm2.2e}, we see that 
\begin{align*}  \mathcal A^2(\Omega_1^{\ast}, \omega_1) &= \bigoplus \bigl\{ \mathcal A^2_{\chi}(\Omega_1^{\ast}, \omega_1) : \chi \in \widehat{\mathbb G}_{\mathbf A} \bigr\} \\
&\cong \bigoplus \bigl\{ \mathcal A^2_{\chi}(\Omega_2^{\ast}, \eta_{\b_{\chi}}) : \chi \in \widehat{\mathbb G}_{\mathbf A}, \varphi([\![\b_{\chi}]\!] =  \chi \bigr\}. \end{align*} 
\vskip0.1in
\noindent It remains to the prove the Bergman kernel identities \eqref{Bergman-Omega1-Omega2} and \eqref{Bergman-diagonal}.  For each $\chi\in \widehat{\G}_{\AAA}$ let $\big\{\psi^{\chi}_{1}(\cdot ; \omega_1), \psi^{\chi}_{2}(\cdot ; \omega_1), \ldots\big\}$  be a complete orthonormal basis for $\mathcal A^{2}_{\chi}(\Omega_{1}^{\ast},\omega_{1})$. Since \[ \mathcal A^{2}(\Omega_{1}^{\ast},\omega_{1})=\bigoplus\nolimits_{\chi\in \widehat{\G}_{\AAA}} \mathcal A^{2}_{\chi}(\Omega_{1}^{\ast},\omega_{1}) \]  is an orthogonal decomposition, the set of functions $\big\{\psi^{\chi}_{k}(\cdot ; \omega_1) :\chi\in \widehat{\G}_{\AAA},\,k\geq 1\big\}$ is a complete orthonormal basis for $\mathcal A^{2}(\Omega_{1}^{\ast}, \omega_{1})$. In view of \eqref{Eqn1.2}, this yields the following expression for the Bergman kernel:
\[ B_{\Omega_{1}^{\ast}}(\z,\w; \omega_1) = \sum_{\chi\in\widehat{\G}_{\AAA}}\sum_{k=1}^{\infty}\psi^{\chi}_{k}(\z; \omega_1)\overline{\psi^{\chi}_{k}(\w ; \omega_1)}, \qquad \z, \w \in \Omega_1^{\ast}.  \] 
For each $\chi \in \widehat{\mathbb G}_{\mathbf A}$, let us choose $\b_{\chi}\in\Z^{n}$ so that $\varphi([\![\b_{\chi}]\!])=\chi$. Then according to \eqref{Thm2.2e}, there are functions $\theta^{\chi}_{k}\in \mathcal A^{2}(\Omega_{2}^{\ast} , \eta_{\b_{\chi}})$, $k = 1, \,2,\ldots$, so that $T_{\b_{\chi}}[\psi^{\chi}_k (\cdot; \omega_1)]  = \theta^{\chi}_{k}(\cdot; \eta_{\b_{\chi}})$; in other words,  
\begin{equation} \label{theta-psi}  \theta^{\chi}_{k}\big(\Phi_{\AAA}(\z); \eta_{\b_{\chi}}\big)=\psi^{\chi}_{k}(\z; \omega_1)F_{\b_{\chi}}(\z). \end{equation}   
Since $T_{\b}$ is norm-preserving, and hence angle-preserving, for all $\b \in \mathbb Z^n$, it follows that for every $\chi \in \widehat{\mathbb G}_{\mathbf A}$, the set of functions $\{\theta_k^{\chi}(\cdot; \eta_{\b_{\chi}}) : k \geq 1 \}$ is a complete orthonormal basis for $\mathcal A^{2}(\Omega_{2}^{\ast} ;\eta_{\b_{\chi}})$. By \eqref{Eqn1.2}, this again  implies  
\begin{equation} \label{Bergman-Omega2}  B_{\Omega_{2}^{\ast}}(\xib,\zetab; \eta_{\b_{\chi}})=\sum_{k=1}^{\infty}\theta^{\chi}_{k}(\xib; \eta_{\b_{\chi}})\,\overline{\theta^{\chi}_{k}(\zetab; \eta_{\b_{\chi}})}, \qquad \xib, \zetab \in \Omega_2^{\ast}.  
\end{equation}  
Combining \eqref{theta-psi} and \eqref{Bergman-Omega2} gives  
\begin{align*}
B_{\Omega_{1}^{\ast}}(\z,\w; \omega_1) &= \sum_{\chi\in\widehat{\G}_{\AAA}}\sum_{k=1}^{\infty}\psi^{\chi}_{k}(\z; \omega_1)\overline{\psi^{\chi}_{k}(\w; \omega_1)}\\
&=
\sum_{\chi\in \widehat{\G}_{\AAA}}F_{-\b_{\chi}}(\z)\Big[\sum_{k=1}^{\infty}\big(\psi^{\chi}_{k}(\z)F_{\b_{\chi}}(\z)\big)\overline{\big(\psi^{\chi}_{k}(\w)\,F_{\b_{\chi}}(\w)\big)}\Big]\,\overline{F_{-\b_{\chi}}(\w)}\\
&=
\sum_{\chi\in \widehat{\G}_{\AAA}}F_{-\b_{\chi}}(\z)\Big[\sum_{k=1}^{\infty}\theta^{\chi}_{k}\big(\Phi_{\AAA}(\z)\big)\overline{\theta^{\chi}_{k}\big(\Phi_{\AAA}(\w)\big)}\Big]\,\overline{F_{-\b_{\chi}}(\w)}\\
&=
\sum_{\chi\in \widehat{\G}_{\AAA}}F_{-\b_{\chi}}(\z)\,B^{\etab_{\chi}}_{\Omega_{2}^{\ast}}\big(\Phi_{\AAA}(\z), \Phi_{\AAA}(\w)\big)\,\overline{F_{-\b_{\chi}}(\w)}.
\end{align*}
This is the relation \eqref{Bergman-Omega1-Omega2}. The relation \eqref{Bergman-diagonal}  follows from \eqref{Bergman-Omega1-Omega2} by setting $\z = \w$, completing the proof.  
\end{proof}

\subsection{Proof of Proposition \ref{Bergman-star-prop}} 
\begin{proof} 
\noindent Since the inclusion $\mathcal A^2(\Omega, \omega) \subseteq \mathcal A^2(\Omega^{\ast}, \omega)$ holds trivially for any domain $\Omega$ and any weight $\omega$, we will focus on the reverse inclusion. In other words, given any admissible weight $\omega$ of monomial type on $\Omega$, and any $f \in \mathcal A^2(\Omega^{\ast}, \omega)$, our goal is to show that $f$ admits a holomorphic extension to $\Omega \cap \mathbb H$. 
\vskip0.1in
\noindent Suppose that $\mathbf u = (u_1, \cdots, u_n) \in \Omega \cap \mathbb H$. Let $m$ denote the number of indices $1 \leq j \leq n$ such that $u_j=0$. Without loss of generality suppose that $u_1 = u_2 = \cdots = u_m = 0$. Write $\u = (\u', \u'')$, where $\u' = (u_1, \cdots, u_m) = \mathbf 0' \in \mathbb C^m$ and $\u'' = (u_{m+1}, \cdots, u_n) \in \mathbb C^{n-m}$. Choose $\epsilon > 0$ so that the polydisk \[ \mathbb D_{n}(\u; \epsilon \mathbf 1) = \bigl\{\z = (z_1, \cdots, z_n) : |z_j- u_j| < \epsilon, \text{ for }  1 \leq j \leq n  \bigr\} \]  is contained in $\Omega$.  In particular, choosing $\epsilon < \min \{|u_j|/2 : m < j \leq n \}$ ensures that  $\mathbb D_{n-m} (\u''; \epsilon \mathbf 1'')$ avoids the coordinate hyperplanes in $\mathbb C^{n-m}$. 
\vskip0.1in
\noindent Let $\mathbb A_m(\mathbf 0', \epsilon \mathbf 1')$ denote the $m$-fold Cartesian product of the punctured disk $\{z \in \mathbb C : 0 < |z| < \epsilon \}$. The choice of $\epsilon$ shows that 
 \[ \mathbb U_n = \mathbb A_m (\mathbf 0'; \epsilon \mathbf 1') \times \mathbb D_{n-m}(\u''; \epsilon \mathbf 1'')   \subseteq \Omega^{\ast}. \] 
Since $f \in \mathcal A^2(\Omega^{\ast}, \omega)$, it restricts to a function in $\mathcal A^2(\mathbb U_n, \omega)$. Let us recall that $\omega(\z) = \bigl| F_{\pmb{\mu}}(\z) \bigr|^2 \vartheta(\z)$ is of monomial type on $\Omega$. In view of the definition \eqref{def-weight}, this implies that $\vartheta$ is bounded above and below by positive constants on $\mathbb D_n(\mathbf 0; \epsilon \mathbf 1)$. Hence $\mathcal A^2(\mathbb U_n, \omega) = \mathcal A^2(\mathbb U_n; |F_{\pmb{\mu}}|^2)$, and the $\mathcal A^2(\mathbb U_n, \omega)$ norm of $f$ is bounded above and below by constant multiples of 
\[ \int_{\mathbb U_n} \bigl| f(\mathbf z) \bigr|^2  \times \bigl|F_{\pmb{\mu}}(\z)\bigr|^2 \, dV(\z).   \] Write $\pmb{\mu} = (\pmb{\mu'}, \pmb{\mu''}) \in\mathbb R^m \times \mathbb R^{n-m}$. Since $\mathbb U_n$ is the Cartesian product of a Reinhardt domain with a polydisk, and the weight $|F_{\pmb{\mu}}|^2$ is also of product type, the weighted Bergman space $\mathcal A^2(\mathbb U_n; |F_{\pmb{\mu}}|^2)$  is well-understood. A complete orthogonal basis for $\mathcal A^2(\mathbb U_n; |F_{\pmb{\mu}}|^2)$ is given by the set of monomial-type functions $\bigl\{F_{\mathbf k}(\z) F_{\pmb{\ell}} (\z''- \u'') : \k \in \mathbf K[\pmb{\mu'}], \; \pmb{\ell} \in (\mathbb N \cup \{0\})^{n-m}  \bigr\}$, where
\begin{align*}  \mathbf K[\pmb{\mu'}] &:= \bigl\{ \mathbf k \in \mathbb Z^m : F_{\k} \in \mathcal L^2 \bigl(\mathbb A_m(\mathbf 0'; \epsilon \mathbf 1'), |F_{\pmb{\mu}'}|^2 \bigr) \bigr\} \\ &\,= \{\mathbf k \in \mathbb Z^m : 2 \mathbf k + 2 \pmb{\mu'} + \mathbf 1' \text{ has  strictly positive entries} \}. 
\end{align*}  
Since $\omega$ is admissible, we know from the definition \eqref{def-admissible-weight} that $\mu_j < 1/2$ for all $1 \leq j \leq m$. Therefore any $\mathbf k = (k_1, \cdots, k_m) \in \mathbf K[\pmb{\mu'}]$ must obey $k_j > -\mu_j - 1/2 > -1$, and hence must have non-negative integer entries. 
Thus any $f \in \mathcal A^2(\mathbb U_n; \omega)$ is of the form 
\begin{equation} \label{f-form}  f(\mathbf z) = \sum_{\mathbf k}' F_{\mathbf k}(\mathbf z') f_{\mathbf k}(\mathbf z''), \text{ where } \mathbf z = (\mathbf z', \mathbf z'') \in \mathbb U_n,  \end{equation} 
where the sum $\sum'$ ranges over multi-indices $\k \in \mathbf K[\pmb{\mu}']$, and the functions $f_{\mathbf k}$ are analytic on $\mathbb D_{n-m}(\mathbf u''; \epsilon \mathbf 1'')$. The series converges both in $\mathcal L^2(\mathbb U_n; \omega)$ and also absolutely and uniformly over compact subsets of $\mathbb U_n$. Since the series \eqref{f-form} only admits non-negative integer powers of $\z'$, an application of the iterated Cauchy integral formula shows that such a function $f$ extends holomorphically to $\mathbb D_n(\mathbf u, \frac{\epsilon}{2} \mathbf 1)$, and hence to $\u$.
\end{proof} 

\subsection{Proof of Theorem \ref{Thm-Bergman-unweighted}}\quad
\begin{proof} 
\noindent {\bf{Part (\ref{omega1-admissible}):}} If $\mathcal A^2(\Omega_1^{\ast}, \omega_1) = \mathcal A^2(\Omega_1, \omega_1)$, then $B_{\Omega_1^{\ast}}( \cdot, \cdot; \omega_1) \equiv B_{\Omega_1}(\cdot, \cdot; \omega_1)$. Thus \eqref{Bergman-identity-2} and \eqref{Bergman-unweighted} follow respectively from \eqref{Bergman-kernel-identity} and \eqref{Bergman-diagonal}. The second statement in part \eqref{omega1-admissible} is a consequence of Proposition \ref{Bergman-star-prop}. 
\vskip0.1in
\noindent {\bf{Part (\ref{omega2-admissible}): }} Suppose now that $\omega_2$ is a weight of monomial type on $\Omega_2$ that is not necessarily admissible, say \[ \omega_2(\w) = \bigl| F_{\pmb{\nu}}(\w) \bigr|^2 \vartheta_2(\w) \text{ for some } \pmb{\nu} \in \mathbb R^n, \; \inf \bigl\{\w : \vartheta_2(\w) : \w \in \Omega_2 \bigr\} > 0.  \]  Given any $\chi \in \widehat{\mathbb G}_{\mathbf A}$, let $\b^{\ast} \in \mathbb Z^n$ be a vector such that $\varphi([\![\b^{\ast}]\!] = \chi$.  It follows from Lemma \ref{Lem3.5}(\ref{Lem3.5b}) that a vector $\b \in \mathbb Z^n$ has the same property if and only if $\b - \b^{\ast} = \m \cdot \mathbf A$ for $\m \in \mathbb Z^n$. We now compute the values of $\c$, as given by Theorem \ref{Thm1.2} \eqref{Thm2.2d},   corresponding to these two choices of vectors:
\[ \c(\b) -  \c(\b^{\ast}) = (\b^{\ast} - \b) \mathbf A^{-1} = - \m \]  
Choosing $\m \in \mathbb Z^n$ to have sufficiently large positive entries relative to $\c(\b^{\ast})$ and $\pmb{\nu}$, we can ensure that every entry of the vector $\c(\b) + \pmb{\nu}$ is $<\frac12$, so that the weight function $\eta_{\b}$ given by
\[ \eta_{\b} (\w) = \det(\mathbf A)^{-1} |F_{\c}(\w)|^2 \omega_2(\w) = \det(\mathbf A)^{-1}  |F_{\c + \pmb{\nu}}(\w)|^2 \vartheta_2(\w) \]
is admissible of monomial type on $\Omega_2$.  Set $\b_{\chi} = \b$. Invoking Proposition \ref{Bergman-star-prop} yields $\mathcal A^2(\Omega_2, \eta_{\b_{\chi}}) = \mathcal A^2(\Omega_2^{\ast}, \eta_{\b_{\chi}})$. Therefore the two spaces share the same Bergman kernel. Substituting $B_{\Omega_2}$ instead of $B_{\Omega_2^{\ast}}$ into \eqref{Bergman-kernel-identity} and \eqref{Bergman-diagonal} leads to the desired claim. 
\vskip0.1in
\noindent {\bf{Part (\ref{Omega-12}): }} This follows by combining parts (\ref{omega1-admissible}) and (\ref{omega2-admissible}), completing the proof. 
\end{proof} 

\section{Examples and applications}\label{appendix-section}

\noindent Here we apply the conclusions of this paper to a few specific domains and obtain Bergman kernel identities and/or estimates for these. 
\subsection {Example 1: Complex ellipsoids} \quad

\smallskip

Let 
\beas
\Omega_{1}&=\Big\{(z_{1},z_{2})\in \C^{2}:|z_{1}|^{2p}+|z_{2}|^{2q}<1\big\},&\omega_{1}(z_{1},z_{2})&\equiv 1,\\
\B_{2}&=\Big\{(\zeta_{1},\zeta_{2})\in \C^{2}:|\zeta_{1}|^{2}+|\zeta_{2}|^{2}<1\Big\}, &\omega_{2}(\zeta_{1},\zeta_{2})&\equiv 1.
\eeas
The domain $\Omega_{1}$ is an example of a \emph{complex ellipsoid} and $\B_{2}$ is the unit ball. If $\Phi_{\AAA}(\z) =(z_{1}^{p},z_{2}^{q})$ then $\Phi_{\AAA}:\Omega_{1}\to\B_{2}$ is a proper holomorphic mapping. In this case 
\begin{equation*}
\AAA=\left(\begin{matrix}p&0\\0&q\end{matrix}\right) \quad \text{  and  } \quad \AAA^{-1}=\left(\begin{matrix}p^{-1}&0\\0&q^{-1}\end{matrix}\right).
\end{equation*} 
Then $\G_{\AAA} \cong \mathbb Z_p \oplus \mathbb Z_q = \big\{(m_{1},m_{2})\in \Z^{2}:0\leq m_{1}\leq p-1,\,0\leq m_{2}\leq q-1\big\}$. The action of $\G_{\AAA}$ on $\C^{2}$ is given by \[ \xib(m_{1},m_{2})\otimes (z_{1},z_{2})=\left(e^{2\pi i \frac{m_{1}}{p}}z_{1}, e^{2\pi i \frac{m_{2}}{q}}z_{2}\right).\] If $\b=(b_{1},b_{2})\in \Z^{2}$, the action of the corresponding character $\chi_{\b}$ on $[\m]\in \G_{\AAA}$ is given by
\beas
\chi_{\b}([\m])=\exp\big[2\pi i \langle\m,\b\cdot\AAA^{-1}\rangle\big]=\exp\Big[2\pi i \Big(\frac{m_{1}b_{1}}{p}+\frac{m_{2}b_{2}}{q}\Big)\Big].
\eeas
 For $\chi \equiv \chi_{\b}$, the operator $\Pi_{\chi}$ defined in \eqref{Eqn1.12} becomes in this case  
\beas
\Pi_{\chi}[f](z_{1},z_{2})
&=
\frac{1}{pq}\sum_{[\m]\in \G_{\AAA}}
\exp\Big[2\pi i \Big(\frac{m_{1}b_{1}}{p}+\frac{m_{2}b_{2}}{q}\Big)\Big]\,\,
f \left(e^{2\pi i \frac{m_{1}}{p}}z_{1}, e^{2\pi i \frac{m_{2}}{q}}z_{2}\right).
\eeas
The property \eqref{invariance-Pi-chi} shows that the function
\beas
(z_1, z_2) \in \Omega_1 \mapsto \frac{z_{1}^{b_{1}}z_{2}^{b_{2}}}{pq}\sum_{[\m]\in \G_{\AAA}}
\exp\Big[2\pi i \Big(\frac{m_{1}b_{1}}{p}+\frac{m_{2}b_{2}}{q}\Big)\Big]\,
f \left(e^{2\pi i \frac{m_{1}}{p}}z_{1}, e^{2\pi i \frac{m_{2}}{q}}z_{2}\right) 
\eeas
is invariant under the action of $\G_{\AAA}$. Finally, according to Theorem \ref{Thm1.2}\eqref{Thm2.2d}, we compute \[ \c=(\1-\b)\cdot\AAA^{-1}-\1=\left(\frac{1-b_{1}-p}{p},\frac{1-b_{2}-q}{q} \right)=(c_{1},c_{2}), \] and so 
\beas
\eta_{\b}(\zeta_{1},\zeta_{2})=(pq)^{-1}|\zeta_{1}|^{2(1-b_{1}-p)p^{-1}}|\zeta_{2}|^{2(1-b_{2}-q)q^{-1}}.
\eeas 
Thus according to Theorem \ref{Thm-Bergman-unweighted},
\beas
B_{\Omega_{1}}(\z,\w)=\sum_{b_{1}=0}^{p-1}\sum_{b_{2}=0}^{q-1} (z_{1}\overline w_{1})^{-pb_{1}}(z_{2}\overline w_{2})^{-qb_{2}}B_{\B_{2}^{\ast}}\Big((z_{1}^{p},z_{2}^{q}),(w_{1}^{p},w_{2}^{q}); \eta_{\b}\Big).
\eeas
Thus the Bergman kernel for the complex ellipsoid in $\C^{2}$ can be written as a sum of weighted Bergman kernels in the punctured unit ball $\mathbb B_2^{\ast}$ of $\C^{2}$. For other formulas see for example \cite{Park}.

\subsection {Example 2: Variants of the Hartogs triangle}\label{Sec7.2} \quad

\smallskip

Let 
\beas
\Omega_{1}&=\Big\{(z_{1},z_{2})\in \C^{2}:0<|z_{1}|^{p}<|z_{2}|^{q}<1\big\},&\omega_{1}(z_{1},z_{2})&\equiv 1,\\
\Delta_{2}&=\Big\{(\zeta_{1},\zeta_{2})\in \C^{2}:|\zeta_{1}|<1,\,\,|\zeta_{2}|<1\Big\}, &\omega_{2}(\zeta_{1},\zeta_{2})&\equiv 1.
\eeas
$\Omega_{1}$ is a variant of the Hartog's triangle (where $p=q=1$) and $\Delta_{2}$ is the bidisk. If $\Phi_{\AAA}(z_{1},z_{2})=(z_{1}^{p}z_{2}^{-q}, z_{2}^{q})$ then $\Phi_{\AAA}:\Omega_{1}\to \Delta_{2}$ is a proper holomorphic map.  This time the action of $\G_{\AAA}$ on $\C^{2}$ is given by $\xib(m_{1},m_{2})\otimes (z_{1},z_{2})=\big(e^{2\pi i \frac{m_{1}+m_{2}}{p}}z_{1}, e^{2\pi i \frac{m_{2}}{q}}z_{2}\big)$, and if $\b=(b_{1},b_{2})\in \Z^{2}$, the action of the corresponding character on $[\m]\in \G_{\AAA}$ is given by 
\beas
\chi_{\b}([\m])=\exp\Big[2\pi i \Big(\frac{(m_{1}+m_{2})b_{1}}{p}+\frac{m_{2}b_{2}}{q}\Big)\Big].
\eeas
As in Example 1, we can write the Bergman kernel for $\Omega_{1}$ in terms of weighted Bergman kernels on the bidisk.

\subsection {Example 3: Complex monomial balls}\label{Sec7.3}\quad
\vskip0.1in
\noindent Let $\PP=\big\{\p_{1}, \ldots, \p_{d}\big\}\subset\Z^{n}$ be a spanning set of vectors in $\R^{n}$, each with non-negative integer entries. For $\a\in\C^{n}$, let us define 
 \beas
\B_{\PP}(\a,\mu) :=\Big\{\z\in\C^{n}:\sum_{j=1}^{d}|F_{\p_{j}}(\z)-F_{\p_{j}}(\a)|^{2}<\mu^{2}\Big\}.
 \eeas
We refer to $\B_{\PP}(\a,\mu)$ as a \emph{complex monomial ball} with center $\a$ and radius $\mu>0$. The study of such domains is part of a larger research program (see  \cite{{NP-2009}, {NP-Stein-conference}, {NP-preprint}, {NP-preprint-Reinhardt-general}, {NP-preprint-Reinhardt-monomial}}).  Using the results of this paper we obtain sharp estimates for $B_{\B_{\PP}(\a,\mu)}(\a,\a)$ (\emph{i.e.} diagonal estimates at the center) which are {\em{uniform}} in the parameter $\a$. 
\vskip0.1in
\noindent We briefly sketch how this is done. Note that if 
\beas
\Omega_{1}&=\big\{\z\in\C^{n}:\text{$|F_{\p_{j}}(\z)-F_{\p_{j}}(\a)|<d^{-\frac{1}{2}}\mu$ for $1\leq j \leq d$}\big\},\\
\Omega_{2}&= \big\{\z\in\C^{n}:\text{$|F_{\p_{j}}(\z)-F_{\p_{j}}(\a)|<\mu$ for $1\leq j \leq d$}\big\},
\eeas
then $\Omega_{1}\subset\B_{\PP}(\a,\mu)\subset\Omega_{2}$. It follows from  equation (\ref{Eqn1.4a}) that \bea\label{Eqn7.1}
B_{\Omega_{2}}(\a,\a)\leq B_{\B_{\PP}(\a,\mu)}(\a,\a)\leq B_{\Omega_{1}}(\a,\a)
\eea
and it suffices to obtain estimates at $(\a,\a)$ for the comparable domains $\Omega_{1}, \Omega_{2}$. Suppose that $\a=(a_{1}, \ldots, a_{n})\notin\H$ so that each $a_{j}\neq 0$, and let $\Psi_{\a}(z_{1}, \ldots, z_{n})=(a_{1}z_{1}, \ldots, a_{n}z_{n})$. Then
 \beas
 \Psi_{\a}^{-1}(\Omega_{1})&=\Big\{\z\in \C^{n}:\text{$|F_{\p_{j}}(\z)-1|<d^{-\frac{1}{2}}\mu\,|F_{\p_{j}}(\a)|^{-1}$ for $1\leq j \leq d$}\Big\},\\
\Psi_{\a}^{-1}(\Omega_{2})&=\Big\{\z\in \C^{n}:\text{$|F_{\p_{j}}(\z)-1|<\mu\,|F_{\p_{j}}(\a)|^{-1}$ for $1\leq j \leq  d$}\Big\}.
 \eeas
 Since $\Psi_{\a}$ is a biholomorphic mapping, it follows from equation (\ref{Eqn7.1}) that
 \beas
\Big (\prod_{j=1}^{n}a_{j}\Big)^{-2}B_{\Psi_{\a}^{-1}(\Omega_{1})}(1,1)\leq B_{\B_{\PP}(\a,\mu)}(\a,\a)\leq \Big(\prod_{j=1}^{n}a_{j}\Big)^{-2}B_{\Psi_{\a}^{-1}(\Omega_{2})}(1,1).
 \eeas
 This suggests that we obtain estimates for the Bergman kernel for domains of the form
 \bea
 \B_{\PP}(\vec\delta)=\Big\{\z\in\C^{n}:\text{$|F_{\p_{j}}(\z)-1|<\delta_{j}$ for $1 \leq j \leq d$}\Big\}
 \eea
 which are uniform in $\vec\delta=(\delta_{1}, \ldots, \delta_{d})\in (0,\infty)^{d}$. In \cite{NP-preprint}, we obtain a structure theorem for domains $\B_{\PP}(\vec\delta)$: after a monomial change of coordinates and depending on the components of $\vec{\delta}$ the domain $\mathbb B_{\mathcal P}$ is comparable to a Cartesian product of disks with small radius and axis deleted Reinhardt domains.  More precisely, there are absolute positive constants $c_0$ and $C_0$ so that for every $\vec{\delta} \in (0, \infty)^d$, there exist the following:
\begin{enumerate}[$\bullet$] 

\vskip0.1in
\item A linearly independent subset $\{\mathbf a_{i_{1}}, \cdots, \mathbf a_{i_{n}} \} \subseteq \mathcal P$ with the corresponding $n\times n$ matrix $\AAA\in\M_{n}(\Z)$.
\vskip0.1in
\item A partition $\{1, \ldots, d\}=J\cup K$ with $J=(j_{1}, \ldots, j_{m})$, $K=(k_{1}, \ldots, k_{d-m})$, and $J\cap K=\emptyset$.  Either $J$ or $K$ may be empty.
\vskip0.1in

\item a set $\mathcal R[K] = \{\mathbf r_j : j \in K \} \subseteq\mathbb Z^{n-m}$, 
\vskip0.1in
\end{enumerate} 
such that 
\beas
\D_{m}\big(c_{0}\vec\delta(J)\big)\times\W_{\RR}\big(c_{0}\vec\delta(K)\big)\subset\Phi_{\AAA}\big(\B_{\PP}(\vec\delta)\big)\subset \D_{m}\big(C_{0}\vec\delta(J)\big)\times\W_{\RR}\big(C_{0}\vec\delta(K)\big).
\eeas
Here $\vec{\delta}(J) = (\delta_{j_1}, \cdots, \delta_{j_m})$,  $\vec\delta(K)=(\delta_{k_{1}}, \ldots, \delta_{k_{d-m}})$, and 
\begin{align}
\mathbb D_m( \lambda \vec{\delta}(J)) &= \bigl\{ (w_1, \cdots, w_m)\in \mathbb C^m : |w_j - 1| < \lambda \delta_{j} \text{ for all } j \in J \bigr\}, \text{ and } \label{disk} \\
\mathbb W_{\mathcal R}^{\ast}(\lambda \vec{\delta}(I))  &= \bigl\{ (w_{m+1}, .., w_n) \in \mathbb C^{n-m} : 0 < |F_{\mathbf r_j} (w_{m+1}, .., w_n)| < \lambda \delta_j \text{ for all } j \in I \bigr\}. \label{Reinhardt}  
\end{align}  
The domain $\mathbb W_{\mathcal R}$ is a Reinhardt domain which may or may not be axes-deleted. The papers \cite{NP-preprint-Reinhardt-general} and \cite{NP-preprint-Reinhardt-monomial} provide geometric estimates 
and a computationally effective algorithm for obtaining sharp estimates for the Bergman kernel on the diagonal for Reinhardt domains of the form $\mathbb W_{\mathcal R}$ and $\mathbb W_{\mathcal R}^{\ast}$. Since the Bergman kernel of the polydisk $\D_{m}$ is well understood, the structure theorem and the results of this paper provide the desired estimates.

\vskip0.1in
\noindent We show how this procedure works in a simple case. Let $n=2$ and $\PP=\big\{(1,0), (0,1), (1,1)\big\}$ so that
\bea\label{Eqn7.5}
\B_{\PP}(\vec\delta)=\Big\{(z_{1},z_{2})\in \C^{2}: |z_{1}-1|<\delta_{1},\,\, |z_{2}-1|<\delta_{2},\,\,|z_{1}z_{2}-1|<\delta_{3}\Big\}.
\eea
The nature of $\B_{\PP}(\vec\delta)$ depends on the sizes of $\delta_{1}, \delta_{2}, \delta_{3}$. Consider the case in which $\delta_{1}$ and $\delta_{2}$ are large and $\delta_{3}$ is small.  Explicitly suppose that 
\begin{equation} \label{Eqn7.6}
\frac{3}{2}<\delta_{1}<\infty, \quad \frac{3}{2}<\delta_{2}<\infty, \quad 0<\delta_{3}<\frac{1}{2}.
\end{equation}
Then 
\[
\Big\{|z_{1}|<\frac{\delta_{1}}{3},\,\, |z_{2}|<\frac{\delta_{2}}{3},\,\,|z_{1}z_{2}-1|<\delta_{3}\Big\}
\subset
\B_{\PP}(\vec\delta)
\subset
\Big\{|z_{1}|<3\delta_{1},\,\, |z_{2}|<3\delta_{2},\,\,|z_{1}z_{2}-1|<\delta_{3}\Big\},
\]
 so $\B_{\PP}(\vec\delta)$ is comparable to \[ \B_{\PP}^{\prime}(\vec\delta)=\big\{(z_{1}, z_{2})\in \C^{2}:|z_{1}|<\delta_{1}, \,\,|z_{2}|<\delta_{2},\,\,|z_{1}z_{2}-1|<\delta_{3}\big\}. \]
Let $\Phi_{\AAA}(z_{1},z_{2})=(z_{1}z_{2},z_{2})$, so \[ \Phi_{\mathbf A}(\mathbb B'_{\mathcal P}(\vec\delta)) =\Big\{(w_{1},w_{2}):|w_{1}w_{2}^{-1}|<\delta_{1}, |w_{2}|<\delta_{2},\,|w_{1}-1|<\delta_{3}\Big\}. \]  If $(z_{1},z_{2})\in \B_{\PP}^{\prime}(\vec\delta)$ then $ \frac{1}{2} < 1 - \delta_3 < |z_1 z_2| < 1 + \delta_3 < \frac{3}{2}$ and so if $(w_{1},w_{2})\in \Phi_{\AAA}\big(\B_{\PP}(\vec\delta)\big)$ then $\frac{1}{2}<|w_{1}|<\frac{3}{2}$. Thus $\Phi_{\mathbf A}(\mathbb B'_{\mathcal P})$
is comparable to \[ \big\{(w_{1},w_{2}):|w_{1}-1|<\delta_{3},\,\,\delta_{1}^{-1}<|w_{2}|<\delta_{2}\big\}=\D\times\W. \]
Here $\D$  is a disk in $\C$ of radius $\delta_{3}$ and $\W$ is an annulus with inner and outer radii $\delta_{1}^{-1}$ and $\delta_{2}$ respectively. It follows that under the size hypotheses in (\ref{Eqn7.6}), we have
\beas
c\,\delta_{3}^{-1}\log(\delta_{1}\delta_{2})<B_{\B_{\PP}(\vec\delta)}(\1,\1)<C\,\delta_{3}^{-1}\log(\delta_{1}\delta_{2})
\eeas
where the constants $c, C$ are independent of $\vec\delta$.

}

\end{document}